\documentclass[a4paper]{amsart}
\usepackage{amsmath, amssymb, xcolor}
\usepackage{paralist}
\usepackage[colorlinks=true]{hyperref}

\newtheorem{theorem}{Theorem}[section]
\newtheorem{corollary}{Corollary}
\newtheorem{lemma}[theorem]{Lemma}
\newtheorem{proposition}[theorem]{Proposition}
\theoremstyle{definition}
\newtheorem{definition}[theorem]{Definition}
\newtheorem{remark}[theorem]{Remark}
\newcommand{\mb}{\mathbb}
\newcommand{\mc}{\mathcal}
\newcommand{\mr}{\mathrm}
\newcommand{\R}{\mathbb{R}}
\newcommand{\N}{\mathbb{N}}
\newcommand{\ce}{\mathrel{\mathop:}=}
\newcommand{\on}{\operatorname}
\newcommand{\ve}{\varepsilon}
\newcommand{\vp}{\varphi}
\newcommand{\ol}{\overline}
\newcommand{\wto}{\rightharpoonup}
\renewcommand{\Re}{\operatorname{Re}}
\renewcommand{\Im}{\operatorname{Im}}

\numberwithin{equation}{section}

\title[Uniqueness and nondegeneracy of ground states]{Uniqueness and nondegeneracy of ground states for nonlinear Schr\"odinger equations with attractive inverse-power potential}

\author{Noriyoshi Fukaya}

\address{Department of Mathematics, Tokyo University of Science, 1-3 Kagurazaka, Shinjuku-ku, Tokyo, 162-8601, Japan}

\subjclass{Primary: 35A02, 35Q55; Secondary: 35J61.}

\keywords{uniqueness, nondegeneracy, ground state, nonlinear Schr\"odinger equation, inverse-power potential, standing wave, instability.}

\email{fukaya@rs.tus.ac.jp}

\begin{document}
\maketitle

\begin{abstract}
We study uniqueness and nondegeneracy of ground states for stationary nonlinear Schr\"odinger equations with a focusing power-type nonlinearity and an attractive inverse-power potential. We refine the results of Shioji and Watanabe (2016) and apply it to prove the uniqueness and nondegeneracy of ground states for our equations. We also discuss the orbital instability of ground state-standing waves.
\end{abstract}

\tableofcontents

\section{Introduction}

\subsection{Background and Aim}

In this paper we consider the following stationary nonlinear Schr\"odinger equation with an attractive inverse-power potential:
\begin{equation} \label{eq:sp}
    -\Delta \phi
    -\dfrac{\gamma}{|x|^\alpha}\phi
    +\omega \phi
    -|\phi|^{p-1}\phi
   =0,\quad x\in\mb{R}^N,
\end{equation}
where $\phi\in H^1(\R^N)\ce H^1(\R^N;\mb{C})$ is the unknown function.
Throughout this paper we always assume that
\begin{equation} \label{eq:asmp}
    N\in\mb{N},\quad
    \gamma>0,\quad
    0<\alpha<\min\{N,2\},\quad
    \omega>\omega_0,\quad 
    1<p<2^*-1.
\end{equation}
Here $-\omega_0<0$ is the smallest negative eigenvalue of the operator $-\Delta-\gamma|x|^{-\alpha}$, that is,
\begin{equation}
    \omega_0
    \ce-\inf\left\{\,\|\nabla v\|_{L^2}^2-\gamma\int_{\mb{R}^N}\frac{|v(x)|^2}{|x|^\alpha}\,dx\mathrel{}\middle|\mathrel{} v\in H^1(\mb{R}^N),~\|v\|_{L^2}=1\,\right\}>0,
\end{equation}
and $2^*$ stands for the Sobolev critical exponent defined by
\[
    2^*
   \ce\begin{cases}
    \infty & \text{if $N=1$ or $2$}, \\
    \dfrac{2N}{N-2} & \text{if $N\ge3$}.
    \end{cases} \]
We also consider positive solutions for more general equation
\begin{equation}\label{eq:gsp}
    \on{div}\bigl(\rho(|x|)\nabla\phi\bigr)
    +\rho(|x|)\bigl(-g(|x|)\phi
    +h(|x|)\phi^{p}\bigr)
   =0,\quad
    x\in\mb{R}^N, \end{equation}
whose positive radial solutions satisfy the ordinary differential equation
\begin{equation}\label{eq:gode}
    \begin{cases}\phi''
    +\dfrac{f'(r)}{f(r)}\phi'
    -g(r)\phi
    +h(r)\phi^p
   =0, \\
    \phi(0)>0,\quad\phi\in C[0,\infty)\cap C^2(0,\infty). \end{cases} \end{equation}
Here
\[
	\rho(r)\ce \frac{f(r)}{r^{N-1}} \] 
and $f,g,h\colon(0,\infty)\to\R$ are suitable functions.
Equation~\eqref{eq:sp} is a special case of \eqref{eq:gsp} with 
\begin{equation} \label{eq:fgh}
    f(r)=r^{N-1}\ \text{(i.e. $\rho(r)=1$)},\quad
    g(r)=\omega-\frac{\gamma}{r^{\alpha}},\quad
    h(r)=1. \end{equation}
    
Equation~\eqref{eq:sp} arises in studies of standing wave solutions for the nonlinear Schr\"odinger equation
\begin{equation} \label{eq:nlsi}
    i\partial_tu
   =-\Delta u
    -\frac{\gamma}{|x|^\alpha} u
    -|u|^{p-1}u,\quad
    (t,x)\in\mb{R}\times\mb{R}^N. \end{equation}
Indeed, $\phi\in H^1(\mb{R}^N)$ is a solution of \eqref{eq:sp} if and only if $e^{i\omega t}\phi(x)$ is a $H^1(\mb{R}^N)$-solution of \eqref{eq:nlsi}. Uniqueness and nondegeneracy of ground states are important to study stability properties of ground-state standing waves for \eqref{eq:nlsi}. The stability for $\omega$ sufficiently large or close to $\omega_0$ has been studied in \cite{FukuOhta03S}, and the instability for sufficiently large has done in \cite{FukaOhta19, FukuOhta03I}. These results were proven by perturbation arguments. Existence and stability of the $L^2$-constrained minimizers have been studied in \cite{LiZhao20}. However, the stability or instability for middle $\omega$ have not been established till now. The general theory of Grillakis, Shatah, and Strauss~\cite{GrilShatStra87, GrilShatStra90} gives sufficient conditions for stability and instability, which seem effective to investigate it for all $\omega$, particularly for middle $\omega$ (see also \cite{Shat83, ShatStra85, Wein86}). In order to apply this theory, we need to establish the uniqueness and nondegeneracy of ground states. Uniqueness and nondegeneracy of ground states for nonlinear elliptic equations are interesting subjects in itself and have been extensively studied by many researchers (see, e.g., \cite{
  ByeoOshi08, 
  Coff72,
  KabeTana99, 
  Kwon89, 
  McLeSerr87, 
  ShioWata13, ShioWata16, 
  Yana91} and references therein).

The aim of this paper is to establish uniqueness and nondegeneracy of ground states for \eqref{eq:sp}. This work is inspired by the recent works of Shioji and Watanabe \cite{ShioWata13, ShioWata16}, and our approach is based on it. They introduced a new generalized Poho\v{z}aev identity and established uniqueness and nondegeneracy of positive radial solutions for \eqref{eq:gsp} by using it under suitable assumptions on functions $f$, $g$, and $h$. Their results are clear and applicable to various equations such as the nonlinear scalar field equation, Matukuma's equation, nonlinear Schr\"odinger equations with the harmonic potential, the Haraux-Weissler equation, Brezis-Nirenberg problem, and so on. However, their results are not directly applicable to all of our cases \eqref{eq:fgh} under the assumptions \eqref{eq:asmp}. Indeed, Shioji and Watanabe~\cite{ShioWata16} impose strong assumptions on $f,g,h$ to control the behavior of the  generalized Poho\v{z}aev function around the origin. In our cases, particularly two-dimensional cases, these assumptions cannot be satisfied because $g$ has a certain amount of singularity at the origin. In addition, the results of \cite{ShioWata16} do not cover the one-dimensional case. For the nonlinear scalar field equations in the $1d$ case (i.e.\ \eqref{eq:sp} with $\gamma=0$), certainly, uniqueness and nondegeneracy of positive radial solutions can be proven by simple ODE arguments. On the other hand, for \eqref{eq:sp} with $\gamma\ne0$ the uniqueness and nondegeneracy are not trivial even if $N=1$. In this paper we refine the uniqueness and nondegeneracy results of \cite{ShioWata16} to cover all of our cases by analyzing the behavior of Poho\v{z}aev function around the origin more precisely. We also establish the uniqueness and nondegeneracy results applicable to the one-dimensional case. By applying these refined results, we establish the uniqueness and nondegeneracy of ground states for \eqref{eq:sp}.

\subsection{Main results}
We define the action functional for \eqref{eq:sp} by
\[
    S_\omega(v)
   \ce\frac12\|\nabla v\|_{L^2}^2
    -\frac{\gamma}{2}\int_{\mb{R}^N}\frac{|v(x)|^2}{|x|^\alpha}\,dx
    +\frac{\omega}{2}\|v\|_{L^2}^2
    -\frac{1}{p+1}\|v\|_{L^{p+1}}^{p+1}. \]
Eq.~\eqref{eq:sp} can be rewritten as $S_\omega'(\phi)=0$. H\"older's and Sobolev's inequalities verify that the action $S_\omega$ is defined on the Sobolev space $H^1(\mb{R}^N)$. We define the set
\begin{equation} \label{eq:gs}
    \mc{G}_\omega
   \ce\left\{\phi\in H^1(\mb{R}^N)\mathrel{}\middle|\mathrel{}
    \begin{aligned}
    &\phi\ne0,~S_\omega'(\phi)=0, \\ 
  & S_\omega(\phi)
   =\inf\{S_\omega(\psi)\mid\psi\in H^1(\mb{R}^N),~\psi\ne0,~S_\omega'(\psi)=0\} 
   \end{aligned} \right\}. \end{equation}
We call an element of $\mc{G}_\omega$ by a \emph{ground state} for \eqref{eq:sp}. The existence of ground states is known (see \cite{FukuOhta03S} or \cite[Section~2]{FukaOhta19}).

\begin{proposition}\label{prop:evgs}
Assume \eqref{eq:asmp}.
Then the set $\mc{G}_\omega$ is not empty.
\end{proposition}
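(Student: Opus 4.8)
The plan is to realize $\mc{G}_\omega$ as the set of minimizers of $S_\omega$ constrained to the Nehari manifold, and to produce such a minimizer by the direct method of the calculus of variations. The first preparatory step is to show that the quadratic form
\[
    Q_\omega(v)\ce\|\nabla v\|_{L^2}^2-\gamma\int_{\R^N}\frac{|v(x)|^2}{|x|^\alpha}\,dx+\omega\|v\|_{L^2}^2
\]
is equivalent to $\|v\|_{H^1}^2$. For the upper bound one uses that $\alpha<\min\{N,2\}$ makes the potential term form-bounded: for every $\ve>0$ there is $C_\ve$ with $\gamma\int|v|^2|x|^{-\alpha}\,dx\le\ve\|\nabla v\|_{L^2}^2+C_\ve\|v\|_{L^2}^2$, obtained by splitting $\{|x|<1\}$ and $\{|x|\ge1\}$ and applying Hardy's and Sobolev's inequalities. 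For the lower bound, the definition of $\omega_0$ gives $Q_\omega(v)\ge(\omega-\omega_0)\|v\|_{L^2}^2$, and combining this with the form bound via a suitable convex combination yields $Q_\omega(v)\ge c\|v\|_{H^1}^2$ for some $c>0$, since $\omega>\omega_0$.

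Next I would introduce the Nehari functional $K_\omega(v)\ce\langle S_\omega'(v),v\rangle=Q_\omega(v)-\|v\|_{L^{p+1}}^{p+1}$ and the manifold $\mc{N}_\omega\ce\{v\in H^1(\R^N)\mid v\ne0,\ K_\omega(v)=0\}$. On $\mc{N}_\omega$ one has the identity $S_\omega(v)=\tfrac{p-1}{2(p+1)}Q_\omega(v)$, so the level $d_\omega\ce\inf_{\mc{N}_\omega}S_\omega$ is nonnegative; combining the norm equivalence with the Sobolev embedding $H^1\hookrightarrow L^{p+1}$ and $p>1$ shows $\inf_{\mc{N}_\omega}Q_\omega>0$, hence $d_\omega>0$. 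Two standard facts then identify $d_\omega$ with the ground-state level in \eqref{eq:gs}: every nonzero critical point of $S_\omega$ lies in $\mc{N}_\omega$, so the level in \eqref{eq:gs} is at least $d_\omega$; conversely, a minimizer $v$ of $S_\omega$ on $\mc{N}_\omega$ is a free critical point, since the Lagrange multiplier $\lambda$ in $S_\omega'(v)=\lambda K_\omega'(v)$ must vanish because $\langle K_\omega'(v),v\rangle=(1-p)Q_\omega(v)\ne0$. It therefore suffices to show that $d_\omega$ is attained.

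For the direct method, take a minimizing sequence $\{v_n\}\subset\mc{N}_\omega$. Since $S_\omega(v_n)=\tfrac{p-1}{2(p+1)}Q_\omega(v_n)\to d_\omega$, the sequence is bounded in $H^1(\R^N)$ by the norm equivalence, and after passing to a subsequence $v_n\wto v$ in $H^1(\R^N)$. The attractive potential is a compact perturbation: splitting the integral into $\{|x|<R\}$ and $\{|x|\ge R\}$ and using the local compact embedding on the former and the smallness of $|x|^{-\alpha}$ on the latter, one checks that $v\mapsto\int|v|^2|x|^{-\alpha}\,dx$ is weakly continuous, so $\int|v_n|^2|x|^{-\alpha}\,dx\to\int|v|^2|x|^{-\alpha}\,dx$.

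The main obstacle is the lack of compactness of $H^1(\R^N)\hookrightarrow L^{p+1}(\R^N)$, which may let mass of $\{v_n\}$ escape to spatial infinity and force $v=0$. I would rule this out in one of two ways. Rearrangement inequalities show that Schwarz symmetrization preserves $\|v\|_{L^2}$ and $\|v\|_{L^{p+1}}$, does not increase $\|\nabla v\|_{L^2}$ (Pólya–Szegő), and does not decrease the potential integral (since $|x|^{-\alpha}$ is radially decreasing), hence does not increase $S_\omega$; one may therefore assume the $v_n$ are radially symmetric and nonincreasing. For $N\ge2$ the compact Strauss embedding $H^1_{\mr{rad}}(\R^N)\hookrightarrow L^{p+1}(\R^N)$ then gives $\|v_n\|_{L^{p+1}}\to\|v\|_{L^{p+1}}$ and hence $v\ne0$. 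For $N=1$, where the radial embedding is not compact, I would instead invoke the concentration-compactness principle for the minimization defining $d_\omega$; the strict binding inequality $d_\omega<d_\omega^\infty$ (with $d_\omega^\infty$ the corresponding level for $\gamma=0$), which follows from the attractiveness of the potential, excludes both vanishing and dichotomy and yields a nonzero limit. Either way, weak lower semicontinuity of $Q_\omega$ together with $K_\omega(v)\le0$ and the convergence of the $L^{p+1}$ norm shows $v\in\mc{N}_\omega$ with $S_\omega(v)\le d_\omega$, so $v$ attains $d_\omega$ and is the desired ground state, proving $\mc{G}_\omega\ne\emptyset$.
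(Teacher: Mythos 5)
Your proposal is correct and takes essentially the same route as the paper, which does not prove Proposition~\ref{prop:evgs} directly but defers to \cite{FukuOhta03S} and \cite[Section~2]{FukaOhta19}: minimization of $S_\omega$ on the Nehari manifold $\{v\ne0,\ K_\omega(v)=0\}$ (exactly the characterization recorded in Lemma~\ref{lem:gs}), with the norm equivalence from $\omega>\omega_0$, weak continuity of the potential term, and compactness restored by symmetrization (for $N\ge2$) or concentration-compactness with the strict binding inequality (for $N=1$). The only compressed steps in your write-up---rescaling symmetrized functions back onto the constraint set, since symmetrization only gives $K_\omega(v_n^*)\le 0$, and upgrading $K_\omega(v)\le0$ to $K_\omega(v)=0$ for the weak limit---are handled by the standard projection $t\mapsto tv$ argument you already invoke, so they are not genuine gaps.
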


First, we consider the regularity and symmetry of ground states. We obtain the following results.

\begin{proposition}\label{prop:re}
Assume \eqref{eq:asmp} and let $\phi\in H^1(\mb{R}^N)$ be a solution of \eqref{eq:sp}.
Then $\phi$ is continuous on $\R^N$ and $C^2$ on $\R^N\setminus\{0\}$.
Moreover, there exist positive constants $C$ and $\delta$ such that
\begin{equation}\label{eq:expdecay}
|\phi(x)|+|\nabla \phi(x)|\le Ce^{-\delta|x|}
\end{equation}
for all $|x|\ge1$.
Furthermore, if $0<\alpha<1$, then $\phi$ is $C^1$ on $\mb{R}^N$.
\end{proposition}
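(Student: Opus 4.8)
The plan is to rewrite \eqref{eq:sp} as the Poisson-type equation
\[
-\Delta\phi = G,\qquad G\ce \frac{\gamma}{|x|^\alpha}\phi-\omega\phi+|\phi|^{p-1}\phi,
\]
and to run an interior elliptic bootstrap on balls, treating separately the region near the origin (where the coefficient is singular) and the region at infinity. The starting point is $\phi\in H^1(\R^N)$, so by Sobolev embedding $\phi\in L^{q}_{\mathrm{loc}}$ for some $q>2$. The key observation is that $|x|^{-\alpha}\in L^s_{\mathrm{loc}}$ exactly when $s<N/\alpha$, and since $\alpha<2$ we have $N/\alpha>N/2$; thus the potential belongs to $L^{N/2+\ve}_{\mathrm{loc}}$ for some $\ve>0$. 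Feeding $\phi\in L^{q}_{\mathrm{loc}}$ into $G$, the nonlinear term $|\phi|^{p-1}\phi$ lies in $L^{q/p}_{\mathrm{loc}}$ and, by H\"older, $\frac{\gamma}{|x|^\alpha}\phi\in L^{r}_{\mathrm{loc}}$ for every $r$ with $\frac1r>\frac{\alpha}{N}+\frac1q$. A Brezis--Kato-type iteration, alternating the $W^{2,r}$ estimate with the subcritical Sobolev embeddings (using $p<2^*-1$), then promotes $\phi$ to $L^q_{\mathrm{loc}}$ for every $q<\infty$ and shows $\phi\in L^\infty_{\mathrm{loc}}$. Once $\phi$ is locally bounded, $G\in L^q_{\mathrm{loc}}$ for every $q<N/\alpha$, whence $\phi\in W^{2,q}_{\mathrm{loc}}$ for all such $q$.

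The threshold exponents now appear through the embeddings $W^{2,q}\hookrightarrow C^0$ (valid for $q>N/2$) and $W^{2,q}\hookrightarrow C^1$ (valid for $q>N$). Since $N/\alpha>N/2$ whenever $\alpha<2$, one may choose $q$ with $N/2<q<N/\alpha$ and conclude $\phi\in C(\R^N)$, which is precisely where the hypothesis $\alpha<2$ enters. On $\R^N\setminus\{0\}$ the coefficient $|x|^{-\alpha}$ is smooth and $G$ is locally H\"older continuous (using that $s\mapsto|s|^{p-1}s$ is $C^1$ for $p>1$), so Schauder estimates upgrade $\phi$ to $C^{2,\beta}_{\mathrm{loc}}$, giving $\phi\in C^2(\R^N\setminus\{0\})$. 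When in addition $\alpha<1$ we have $N/\alpha>N$, so one may take $q>N$ above and obtain $\phi\in W^{2,q}_{\mathrm{loc}}\hookrightarrow C^{1,\beta}(\R^N)$, yielding the asserted $C^1$ regularity on all of $\R^N$; this exponent is exactly the one that fails for $\alpha\ge1$.

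For the exponential bound I would first record that $\phi(x)\to0$ as $|x|\to\infty$: applying the local $W^{2,q}$ estimate with $q>N/2$ on unit balls $B_1(x_0)$ gives $\|\phi\|_{L^\infty(B_1(x_0))}\le C\bigl(\|\phi\|_{L^q(B_2(x_0))}+\|G\|_{L^q(B_2(x_0))}\bigr)$, and the right-hand side tends to $0$ as $|x_0|\to\infty$ since $\phi\in L^q$ and the potential is bounded away from the origin. Setting $u\ce|\phi|$, Kato's inequality applied to \eqref{eq:sp} yields, in the distributional sense,
\[
-\Delta u + \Bigl(\omega-\frac{\gamma}{|x|^\alpha}-u^{p-1}\Bigr)u\le 0.
\]
Choosing $R$ so large that $\frac{\gamma}{|x|^\alpha}+u^{p-1}\le\omega/2$ on $\{|x|\ge R\}$ (possible because $u\to0$ and $\omega>\omega_0>0$), this reduces to $-\Delta u+\frac{\omega}{2}u\le0$ there. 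Comparing with the supersolution $W(x)\ce Me^{-\delta(|x|-R)}$, where $\delta^2<\omega/2$ and $M\ce\max_{|x|=R}u$, and noting that $-\Delta W+\frac{\omega}{2}W\ge(\frac{\omega}{2}-\delta^2)W>0$ for $|x|\ge R$, the maximum principle on the exterior domain gives $u\le W$, i.e.\ $|\phi(x)|\le Ce^{-\delta|x|}$. The gradient bound then follows from interior Schauder (or $W^{2,q}$) estimates on $B_{1/2}(x_0)$ for $|x_0|$ large, since $\phi$ and $G$ both decay exponentially on such balls, which lie away from the singularity.

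The main obstacle is the interplay between the singular potential and the integrability thresholds: the bootstrap must be pushed across the origin while respecting the ceiling $q<N/\alpha$ imposed by $|x|^{-\alpha}$, and it is this ceiling—compared against the Sobolev thresholds $N/2$ and $N$—that produces the sharp dichotomy ($\alpha<2$ for continuity, $\alpha<1$ for $C^1$). A secondary technical point is that $\phi$ is complex-valued, so the decay estimate cannot be obtained by a direct maximum principle on $\phi$; Kato's inequality is used precisely to transfer the differential inequality to the scalar quantity $u=|\phi|$, after which the comparison argument is routine.
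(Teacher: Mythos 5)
Your proposal is correct, but it takes a genuinely different route from the paper, most notably for the decay estimate. For the regularity, the paper never pushes an $L^p$-bootstrap across the singularity: it multiplies the equation by a cutoff $\chi_\delta$ vanishing near the origin, so that the bootstrap of Cazenave (Proof of Theorem~8.1.1 in \cite{Caze03}) runs with smooth, decaying coefficients and yields $\chi_\delta\phi\in W^{3,q}(\R^N)$ for all $q\in[2,\infty)$, hence $C^{2,\beta}$ and Lipschitz bounds away from $0$; the origin is then handled separately by a short bootstrap on the original equation. Your argument instead carries the bootstrap through the origin, playing the integrability ceiling $q<N/\alpha$ of $|x|^{-\alpha}$ against the Sobolev thresholds $N/2$ and $N$; this is more unified, makes the dichotomy ($\alpha<2$ for continuity, $\alpha<1$ for $C^1$) completely transparent, and in fact supplies the detail that the paper compresses into ``it is easy to check,'' at the price of invoking a Brezis--Kato iteration. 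For the exponential decay the two methods are truly distinct: the paper uses the weighted-energy trick, testing the cutoff equation against $\theta_\ve\chi\phi$ with $\theta_\ve(x)=e^{|x|/(1+\ve|x|)}$, letting $\ve\searrow0$ to obtain $\int e^{|x|}\bigl(|\nabla(\chi\phi)|^2+|\chi\phi|^2\bigr)\,dx<\infty$, and then converting this $L^2$-weighted bound into pointwise decay of $\phi$ and $\nabla\phi$ via the Lipschitz continuity coming from the bootstrap; you instead pass to $u=|\phi|$ by Kato's inequality and compare with the explicit supersolution $Me^{-\delta(|x|-R)}$ on an exterior domain. Both are sound: the energy method yields the decay of $\phi$ and $\nabla\phi$ in one stroke and sidesteps any maximum principle, while your comparison argument is more elementary and pointwise but requires the preliminary fact $\phi(x)\to0$ as $|x|\to\infty$ and a separate interior elliptic estimate for the gradient decay --- both of which you correctly supply (the vanishing at infinity needs $\phi\in L^q(\R^N)$ for some $q>N/2$, which your global Brezis--Kato step does provide, so no gap there).
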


\begin{remark}
When $1\le\alpha<2$, positive radial solutions $\phi$ are not $C^1$ at the origin. Indeed, from the expression~\eqref{eq:expphi'} below, it follows that $|\phi'(r)|\sim r^{1-\alpha}$ as $r\searrow0$.
\end{remark}

\begin{proposition}\label{prop:gs}
Assume \eqref{eq:asmp}.
If $\phi\in\mc{G}_\omega$, then there exists $\theta\in\mb{R}$ such that $e^{i\theta}\phi$ is positive, radial, and decreasing function.
\end{proposition}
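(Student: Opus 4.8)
The plan is to combine the variational (Nehari) characterization of ground states with symmetric-decreasing rearrangement, exploiting the fact that the potential weight $|x|^{-\alpha}$ is itself radially symmetric and strictly decreasing. First I would record the standard Nehari description of $\mc{G}_\omega$: writing $A(v)\ce\|\nabla v\|_{L^2}^2-\gamma\int_{\R^N}|v|^2|x|^{-\alpha}\,dx+\omega\|v\|_{L^2}^2$ and $B(v)\ce\|v\|_{L^{p+1}}^{p+1}$, the hypothesis $\omega>\omega_0$ gives $A(v)\ge(\omega-\omega_0)\|v\|_{L^2}^2>0$ for every $v\ne0$, so for each such $v$ the map $t\mapsto S_\omega(tv)$ has a unique maximizer $t_0(v)>0$ characterized by $t_0(v)^{p-1}=A(v)/B(v)$, the common action value of ground states equals $d\ce\inf_{v\ne0}\max_{t>0}S_\omega(tv)$, and this infimum is attained exactly on the Nehari set $\{v\ne0:K_\omega(v)=0\}$, where $K_\omega(v)=A(v)-B(v)$. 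At a Nehari point $t_0(v)=1$, so there $S_\omega(v)=\max_{t>0}S_\omega(tv)=(\tfrac12-\tfrac1{p+1})A(v)$, an increasing function of $A$ at fixed $B$.

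Given $\phi\in\mc{G}_\omega$, I would set $u\ce|\phi|$ and let $u^*$ be its symmetric-decreasing rearrangement. The three monotonicities I track are the diamagnetic inequality $\|\nabla u\|_{L^2}\le\|\nabla\phi\|_{L^2}$, the Pol\'ya--Szeg\H{o} inequality $\|\nabla u^*\|_{L^2}\le\|\nabla u\|_{L^2}$, and the rearrangement inequality $\int_{\R^N}u^2|x|^{-\alpha}\,dx\le\int_{\R^N}(u^*)^2|x|^{-\alpha}\,dx$, the last valid because $|x|^{-\alpha}$ is its own symmetric-decreasing rearrangement. Since $\|\phi\|_{L^2}=\|u\|_{L^2}=\|u^*\|_{L^2}$ and $B(\phi)=B(u)=B(u^*)$, these combine to $A(u^*)\le A(u)\le A(\phi)$ with $B$ fixed, whence $\max_{t>0}S_\omega(tu^*)\le\max_{t>0}S_\omega(tu)\le\max_{t>0}S_\omega(t\phi)=S_\omega(\phi)=d$. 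Because the leftmost quantity is $\ge d$ by definition of $d$, every inequality is an equality; in particular $A(u^*)=A(u)=A(\phi)$.

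From $A(u)=A(\phi)$ and $B(u)=B(\phi)$ I get $K_\omega(u)=0$ and $S_\omega(u)=d$, so the nonnegative function $u$ is itself a ground state, hence a weak (and, by Proposition \ref{prop:re}, continuous) solution of \eqref{eq:sp}. Writing the equation as $-\Delta u+c(x)u=0$ with $c(x)=\omega-\gamma|x|^{-\alpha}-u^{p-1}$ locally bounded away from the origin, the strong maximum principle (a local Harnack inequality absorbing the subcritical singularity $\alpha<2$ near $0$) forces the nontrivial sign-definite $u$ to satisfy $u>0$ on $\R^N$. Then the diamagnetic equality $\|\nabla u\|_{L^2}=\|\nabla\phi\|_{L^2}$, together with the pointwise identity $|\nabla\phi|^2=|\nabla u|^2+u^2|\nabla\sigma|^2$ for the phase $\sigma$ of $\phi$ and the positivity of $u$, gives $\nabla\sigma\equiv0$, i.e. $\phi=e^{i\theta}u$ for a constant $\theta$.

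Finally, $A(u)=A(u^*)$ reads $\bigl(\|\nabla u\|_{L^2}^2-\|\nabla u^*\|_{L^2}^2\bigr)+\gamma\bigl(\int_{\R^N}(u^*)^2|x|^{-\alpha}\,dx-\int_{\R^N}u^2|x|^{-\alpha}\,dx\bigr)=0$; both parenthesized terms are nonnegative, so each vanishes, and the equality case of the rearrangement inequality for the strictly symmetric-decreasing weight $|x|^{-\alpha}$ yields $u=u^*$, so $u$ is radial and nonincreasing. Hence $e^{-i\theta}\phi=u=u^*$ is positive, radial, and (strictly, by the ODE \eqref{eq:gode}) decreasing, which is the assertion. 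The step I expect to be the main obstacle is the strict positivity: the attractive potential is not locally bounded at the origin, so the strong maximum principle must be applied through a Harnack inequality tailored to the $|x|^{-\alpha}$ singularity, and one must check that the equality analysis of the rearrangements remains legitimate for this singular weight — a point that is in fact helped, not hindered, by $|x|^{-\alpha}$ being radially decreasing.
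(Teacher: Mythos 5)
Your proposal is correct, and its radial-symmetry half is essentially the paper's own argument (Lemma~\ref{lem:raddec}): rearrangement can only lower the quadratic part of the action at fixed $L^q$-norms, and the equality case of Lemma~\ref{lem:ri} for the strictly decreasing weight $|x|^{-\alpha}$ forces $u=u^*$. Where the paper closes this loop with Lemma~\ref{lem:Knega} (if $K_\omega(v)<0$ then $\|v\|_{L^{p+1}}>\|\phi\|_{L^{p+1}}$), you use the fibering map $t\mapsto S_\omega(tv)$ and the inf-max characterization of the ground state level; these are equivalent mechanisms, both resting on the Nehari description of Lemma~\ref{lem:gs}. The genuine difference is the positivity/constant-phase step. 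The paper (Lemma~\ref{lem:posi}) replaces $\phi$ by $|\Re\phi|+i\lvert\Im\phi\rvert$, checks that this is again a ground state, and then invokes the Perron--Frobenius-type uniqueness of nonnegative eigenfunctions of Schr\"odinger operators (Lemma~\ref{lem:unief}) to conclude that real and imaginary parts are multiples of a single positive eigenfunction, whence a constant phase. You instead pass to $u=|\phi|$, use the diamagnetic inequality to see that $u$ is itself a ground state, get $u>0$ from the strong maximum principle, and extract the constant phase from the equality case of the diamagnetic inequality. Both routes work. The paper's approach gets positivity directly from spectral theory, with the singular potential handled painlessly because Lemma~\ref{lem:unief} only needs $V\in L^1_{\mr{loc}}$ bounded above; yours is variationally self-contained but places the burden on the maximum principle near the singularity and on the global phase decomposition $\phi=ue^{i\sigma}$. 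Neither is an obstacle here: since $\gamma>0$ the singular term has favorable sign, so $-\Delta u+\omega u=\gamma|x|^{-\alpha}u+u^p\ge0$ exhibits $u$ as a continuous nonnegative supersolution of $-\Delta+\omega$, and the classical strong maximum principle already gives $u>0$ on all of $\R^N$ (no Kato-class Harnack inequality is really needed); and once $u>0$ everywhere, $\{\phi\ne0\}=\R^N$ is simply connected, so the phase $\sigma$ is globally defined and your identity $|\nabla\phi|^2=|\nabla u|^2+u^2|\nabla\sigma|^2$ legitimately forces $\sigma$ to be constant.
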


Proposition~\ref{prop:re} can be proven by a standard elliptic regularity arguments as in \cite[Section~8]{Caze03} or \cite[Section~11]{LiebLoss01}. Proposition~\ref{prop:gs} can be proven by a uniqueness of eigenfunctions with respect to the smallest negative eigenvalue of Schr\"odinger operators and the equal condition of the rearrangement inequality \cite[Section~3.4]{LiebLoss01}.

Next we consider uniqueness of ground states for \eqref{eq:sp}. From the regularity and the radial symmetry (Propositions~\ref{prop:re} and \ref{prop:gs}), we can reduce the problem to uniqueness of positive solutions for the ordinary differential equation
\begin{equation} \label{eq:ode}
    \begin{cases}
    \phi''
    +\dfrac{N-1}{r}\phi'
    -\left(\omega-\dfrac{\gamma}{r^\alpha}\right)\phi
    +\phi^p
   =0,\quad
    r>0, \\
    \phi(0)>0,\quad
    \phi\in C[0,\infty)\cap C^2(0,\infty).
    \end{cases} \end{equation}
One of the main results of this paper is the following uniqueness result.

\begin{theorem} \label{thm:unique}
Assume \eqref{eq:asmp}.
\begin{itemize}
\item
If $N\ge2$, the positive solution $\phi$ of \eqref{eq:ode} which satisfy $\phi(r)\to0$ as $r\to\infty$ is unique.
\item
If $N=1$,
the positive solution $\phi$ of \eqref{eq:ode} which satisfy $\phi(r)\to0$ as $r\to\infty$ and $\phi'(r)\to0$ as $r\searrow0$ is unique.
\end{itemize}
\end{theorem}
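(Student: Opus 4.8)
The plan is to reduce Theorem~\ref{thm:unique} to a uniqueness statement for the general radial ODE~\eqref{eq:gode}, of Shioji--Watanabe type, and then to verify its hypotheses for the special coefficients~\eqref{eq:fgh}. By Propositions~\ref{prop:re} and~\ref{prop:gs}, every ground state is, up to a phase, a positive radial decaying solution of~\eqref{eq:ode}, so it suffices to treat~\eqref{eq:ode}. The first preparatory step is to pin down the exact behavior at the origin: writing the equation as $(r^{N-1}\phi')'=r^{N-1}\bigl(g(r)\phi-\phi^p\bigr)$ with $g(r)=\omega-\gamma r^{-\alpha}$ and integrating from $0$, the continuity $\phi\in C[0,\infty)$ gives $r^{N-1}\phi'(r)=\int_0^r s^{N-1}\bigl(g(s)\phi(s)-\phi(s)^p\bigr)\,ds$, whence $\phi'(r)\sim-\tfrac{\gamma\phi(0)}{N-\alpha}\,r^{1-\alpha}$ as $r\searrow0$; in particular $\phi'(0)=0$ when $0<\alpha<1$. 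These asymptotics, together with the exponential decay of $\phi$ and $\phi'$ from Proposition~\ref{prop:re}, are precisely what controls the boundary terms in the Pohozaev identity below.

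The heart of the matter is a generalized Pohozaev identity. For a solution of~\eqref{eq:ode} I would start from the energy $I(r)=\tfrac12\phi'(r)^2-\tfrac12 g(r)\phi(r)^2+\tfrac1{p+1}\phi(r)^{p+1}$, which satisfies $I'(r)=-\tfrac{N-1}{r}\phi'(r)^2-\tfrac12 g'(r)\phi(r)^2$, and build a Pohozaev function of the form $P(r)=a(r)I(r)+b(r)\phi(r)\phi'(r)$, with weights $a,b$ manufactured from $f,g,h$ so that $P'$ keeps a fixed sign on $(0,\infty)$. Integrating $P'$ and inserting the boundary values of $P$ at $0$ and $\infty$ yields the monotonicity that drives the argument; uniqueness then follows by the standard Shioji--Watanabe comparison, in which, given two positive decaying solutions $\phi_1\neq\phi_2$, one plays the sign of $P$ against the zeros of the Wronskian $r^{N-1}(\phi_1'\phi_2-\phi_1\phi_2')$ to reach a contradiction.

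Verifying the structural hypotheses for~\eqref{eq:fgh} is then a direct computation: with $f=r^{N-1}$, $g=\omega-\gamma r^{-\alpha}$, and $h\equiv1$ one has $g'(r)=\alpha\gamma r^{-\alpha-1}>0$, and it is precisely the attractiveness $\gamma>0$ that makes $g$ increasing, the sign needed for the monotonicity of $P$. The principal obstacle---the point at which one must refine \cite{ShioWata16}---is the behavior of $P$ as $r\searrow0$. Since $g(r)\to-\infty$ like $-\gamma r^{-\alpha}$, the energy itself blows up, $I(r)\sim\tfrac{\gamma}{2}\phi(0)^2 r^{-\alpha}$, and $g'(r)$ blows up like $r^{-\alpha-1}$, so the naive origin boundary term of $P$ is singular, and the assumptions of \cite{ShioWata16} are tailored to exclude exactly this much singularity (so that they fail here, in particular when $N=2$). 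The delicate step is to choose the weight $a(r)$ so that $a(r)I(r)$ has a finite limit with the correct sign as $r\searrow0$, which forces one to use the sharp rate $\phi'(r)\sim c\,r^{1-\alpha}$ rather than mere boundedness of $\phi'$.

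Finally, the one-dimensional case lies outside the $N\ge2$ framework and requires a parallel treatment. Here $f\equiv1$ does not vanish at the origin, so continuity of $\phi$ alone no longer forces $\phi'(0^+)=0$ (in contrast to $N\ge2$, where boundedness of $\phi$ forces it); this is exactly why the statement adds the hypothesis $\phi'(r)\to0$ as $r\searrow0$, which selects the genuine radial solution. I would run the same Pohozaev function on $(0,\infty)$ while supplying this origin condition directly, noting that since $\alpha\in(0,1)$ when $N=1$ the origin singularity is milder and the boundary analysis simplifies accordingly.
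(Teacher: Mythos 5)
Your overall strategy is the paper's strategy (reduce to the radial ODE via Propositions~\ref{prop:re} and~\ref{prop:gs}, run a Shioji--Watanabe generalized Poho\v{z}aev argument, and refine the origin analysis using the sharp rate $\phi'(r)\sim r^{1-\alpha}$), but two of your concrete claims fail, and the step where the paper's actual work lies is missing. First, your Poho\v{z}aev function $P=aI+b\phi\phi'$ is structurally incomplete: once the coefficients of $\phi'^2$ and $\phi^{p+1}$ in $P'$ are forced to cancel (which pins down $a=f^{2(p+1)/(p+3)}h^{-2/(p+3)}$ and $b=-\tfrac12a'+\tfrac{f'}{f}a$), a cross term $\bigl(b'-\tfrac{f'}{f}b\bigr)\phi\phi'$ survives, so no identity of the form $P'=G\phi^2$ holds; one needs the extra term $\tfrac12c\phi^2$ with $c=-b'+\tfrac{f'}{f}b$ as in \eqref{eq:Jr} (Proposition~\ref{prop:Pohozaev}). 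Second, your requirement that the weights make ``$P'$ keep a fixed sign on $(0,\infty)$'' is unachievable for $N\ge2$: with the forced weights, $G$ has the form \eqref{eq:expG3} with $A>0$ and $C<0$ when $N\ge3$, so $G$ necessarily changes sign (and it may also do so when $N=2$, see \eqref{eq:expG}). The method does not need a signed $G$; it needs the weaker structural condition \eqref{item:N3}, together with a genuine argument (Lemma~\ref{lem:Jposi}, resting on Lemma~\ref{lem:Jrto0}) that $J(\cdot;\phi)\ge0$, and for $N=2$ this already requires the new alternative \eqref{item:Da} via the completed-square form \eqref{eq:Jexp2}.

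More importantly, the contradiction argument is not a matter of ``playing the sign of $P$ against zeros of the Wronskian'': the paper compares two solutions through $\eta=\psi/\phi$ and $X=\eta^2J(\cdot;\phi)-J(\cdot;\psi)$, where $\eta'$ has a \emph{fixed} sign (Lemma~\ref{lem:eta'}), and the entire difficulty--the reason \cite{ShioWata16} does not apply--sits in the boundary term of $X$ at the origin (Lemma~\ref{lem:Xrto0}). Your proposed fix, choosing $a$ so that $a(r)I(r)$ has a finite limit as $r\searrow0$, is a single-solution statement and only addresses $\liminf_{r\searrow0}J\ge0$; it cannot control $X$, whose naive expression \eqref{eq:Xexpli0} contains terms of size $a(r)U(r)^2$ and $b(r)U(r)$ that genuinely fail to vanish when $N=2$ and $\alpha\ge2-4/(p+3)$ (Remark~\ref{rem:cmoSW}). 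The resolution is a cancellation \emph{between the two solutions}: both cusps have the same leading singular behavior, so the Wronskian-type quantity $\eta'$ is $O(V(r))=O(r)$ near the origin (Lemma~\ref{lem:3}), and rewriting $X$ as \eqref{eq:Xexpli} in terms of $\eta'$ makes the boundary term harmless under the mild condition \eqref{item:ab}. Your proposal never identifies this cancellation, which is the theorem's key new idea. (Minor, but symptomatic: your parenthetical that for $N\ge2$ ``boundedness of $\phi$ forces $\phi'(0^+)=0$'' contradicts your own asymptotics when $\alpha\ge1$; what $N\ge2$ actually gives, and what the general theorem needs, is $f(r)\phi'(r)=r^{N-1}\phi'(r)\to0$, cf.\ \eqref{eq:fphi'0} and assumption \eqref{item:Riii}.)
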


\begin{remark}\label{rem:N=1}
In the case $N=1$, we need the additional assumption $\lim_{r\searrow0}\phi'(r)=0$. The ground states naturally satisfy this assumption because they are $C^1$ and radial functions from Propositions~\ref{prop:re} and \ref{prop:gs} when $0<\alpha<1$.
\end{remark}

For \eqref{eq:ode} with $\gamma=0$, Coffman~\cite{Coff72} firstly proved uniqueness of positive solutions for $N=3$ and $p=3$. After some improvements \cite{McLeSerr87, PeleSerr83}, Kwong~\cite{Kwon89} established the uniqueness of positive solutions by using Sturm's oscillation theory. Later, their generalization by the methods of Poho\v{z}aev identities were developed by, e.g., Yanagida~\cite{Yana91}, Shioji and Watanabe~\cite{ShioWata13, ShioWata16} (see also references therein).
Recently, Dinh~\cite{DinhIP} proved uniqueness of positive solutions for \eqref{eq:ode} only in the cases $N\ge3$ and $0<\alpha<1$ by using the results of \cite{ShioWata13}.

Combining Propositions~\ref{prop:re}, Proposition~\ref{prop:gs}, and Theorem~\ref{thm:unique} we have the following uniqueness of ground states.

\begin{corollary}\label{cor:uniquegs}
Assume \eqref{eq:asmp}. Then there exists a unique positive radial solution $\phi_\omega\in H^1(\R^N)$ of \eqref{eq:sp}.
Moreover, the set of all ground states $\mc{G}_\omega$ is characterized as
\[
    \mc{G}_\omega
   =\{e^{i\theta}\phi_\omega\mid \theta\in\mb{R}\}. \]
\end{corollary}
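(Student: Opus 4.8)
The plan is to assemble the three cited results---Propositions~\ref{prop:re} and \ref{prop:gs} together with Theorem~\ref{thm:unique}---using only the gauge (phase) invariance of the action functional $S_\omega$ as additional structural input. There is no deep analytic content left to supply here; the work lies entirely in the quoted regularity, symmetry, and ODE-uniqueness statements, and the corollary merely packages them.

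First I would establish existence and well-definedness of $\phi_\omega$. By Proposition~\ref{prop:evgs} the set $\mc{G}_\omega$ is nonempty, so we may pick $\phi\in\mc{G}_\omega$. Proposition~\ref{prop:gs} furnishes $\theta_0\in\mb{R}$ such that $\phi_\omega\ce e^{i\theta_0}\phi$ is positive, radial, and decreasing. By Proposition~\ref{prop:re}, $\phi_\omega$ is continuous on $\R^N$, is $C^2$ on $\R^N\setminus\{0\}$, and decays exponentially; in particular $\phi_\omega(r)\to0$ as $r\to\infty$. Being a positive radial $H^1$-solution of \eqref{eq:sp}, $\phi_\omega$ solves the radial ODE \eqref{eq:ode}. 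This produces one positive radial solution of \eqref{eq:sp} in $H^1(\R^N)$.

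Next I would invoke Theorem~\ref{thm:unique} to obtain uniqueness. For $N\ge2$ the positive solution of \eqref{eq:ode} vanishing at infinity is unique; since, by Proposition~\ref{prop:re}, \emph{every} positive radial $H^1$-solution of \eqref{eq:sp} solves \eqref{eq:ode} and tends to $0$ at infinity, any such solution must coincide with $\phi_\omega$. For $N=1$ the assumption \eqref{eq:asmp} forces $0<\alpha<\min\{1,2\}=1$, so Proposition~\ref{prop:re} yields that $\phi_\omega$ is $C^1$ on $\mb{R}$; as a $C^1$ radial function on $\mb{R}$ is even, its derivative at the origin vanishes, whence $\phi_\omega'(r)\to0$ as $r\searrow0$ (cf.\ Remark~\ref{rem:N=1}). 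Thus the extra hypothesis of Theorem~\ref{thm:unique} in the one-dimensional case holds automatically, and uniqueness follows in the same way. This establishes the first assertion.

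Finally I would prove the characterization of $\mc{G}_\omega$. The key observation is that $S_\omega$ is invariant under the phase action $v\mapsto e^{i\theta}v$, since each term of $S_\omega$ depends only on $|v|$ or $|\nabla v|$, and correspondingly $S_\omega'(e^{i\theta}v)=e^{i\theta}S_\omega'(v)$. Hence $e^{i\theta}\phi_\omega$ is again a ground state for every $\theta\in\mb{R}$, giving $\{e^{i\theta}\phi_\omega\mid\theta\in\mb{R}\}\subseteq\mc{G}_\omega$. Conversely, for any $\psi\in\mc{G}_\omega$, Proposition~\ref{prop:gs} provides $\theta_1\in\mb{R}$ with $e^{i\theta_1}\psi$ positive, radial, and decreasing, hence a positive radial solution of \eqref{eq:sp}; by the uniqueness just proven, $e^{i\theta_1}\psi=\phi_\omega$, so $\psi=e^{-i\theta_1}\phi_\omega$, giving the reverse inclusion. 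The only steps demanding a line of care---and the closest thing to an obstacle---are the reduction of the radial $H^1$ equation to \eqref{eq:ode} via Proposition~\ref{prop:re} and, for $N=1$, the verification that the boundary condition $\phi'(0^+)=0$ required by Theorem~\ref{thm:unique} is satisfied automatically.
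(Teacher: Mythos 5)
Your proposal is correct and follows exactly the route the paper intends: the paper states the corollary by ``combining Propositions~\ref{prop:re}, \ref{prop:gs}, and Theorem~\ref{thm:unique}'' without writing out details, and your assembly (existence via Proposition~\ref{prop:evgs} and phase rotation, reduction to the ODE~\eqref{eq:ode} via regularity and exponential decay, the automatic verification of $\phi'(0^+)=0$ for $N=1$ since $\alpha<1$ forces $C^1$ regularity of the even solution, and the phase-invariance argument for the characterization of $\mc{G}_\omega$) is precisely the intended argument, matching Remark~\ref{rem:N=1} as well.
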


Finally we consider nondegeneracy of the positive ground state $\phi_\omega$ given in Corollary~\ref{cor:uniquegs}.
The linearized operator $S_\omega''(\phi_\omega)$ around the ground state is explicitly written as
\[
    S_\omega''(\phi)v
   =-\Delta v
    -\frac{\gamma}{|x|^\alpha}v
    +\omega v
    -p\phi^{p-1}\Re v
    -i\phi^{p-1}\Im v \]
for $v\in H^1(\mb{R}^N)$. By identifying $\mb{C}$ and $\mb{R}^2$, we can rewrite $S_\omega''(\phi)$ as the two by two matrix form
\[
    S_\omega''(\phi)
   =\begin{bmatrix}
    L_1 & 0 \\ 
    0 & L_2  \end{bmatrix}, \]
where $L_1$ and $L_2$ are self-adjoint operators on $L^2(\mb{R}^N;\mb{R})$ defined by
\begin{align}\label{eq:L1}
    L_1
  &\ce-\Delta
    -\frac{\gamma}{|x|^\alpha}
    +\omega
    -p\phi^{p-1}, \\
    L_2 \label{eq:L2}
  &\ce-\Delta
    -\frac{\gamma}{|x|^\alpha}
    +\omega
    -\phi^{p-1}. \end{align}
The following is our nondegeneracy result.

\begin{theorem} \label{thm:nond}
Assume \eqref{eq:asmp}, let $\phi\in\mc{G}_\omega$ be the positive ground state, and let $L_1$ be the operator defined in \eqref{eq:L1}.
Then the kernel of $ L_1$ is trivial, that is, if $w\in H^1(\R^N;\R)$ satisfies $L_1w=0$, then $w=0$.
\end{theorem}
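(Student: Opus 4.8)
The plan is to exploit the rotational invariance of $L_1$ to reduce the kernel problem to a family of radial ODE problems indexed by angular momentum $k$, and then to treat the sectors separately. Since $\gamma|x|^{-\alpha}$, $\omega$, and $p\phi^{p-1}$ are all radial (using Proposition~\ref{prop:gs} and Corollary~\ref{cor:uniquegs} to take $\phi=\phi_\omega$ positive, radial, and strictly decreasing), the operator $L_1$ commutes with the action of $O(N)$. For $N\ge2$ I would expand any $w\in H^1(\R^N;\R)$ with $L_1w=0$ in spherical harmonics, $w(x)=\sum_{k\ge0}\sum_m w_{k,m}(r)Y_{k,m}(x/|x|)$, so that each radial profile lies in the kernel of the sector operator $L_{1,k}=-\partial_r^2-\frac{N-1}{r}\partial_r+\frac{k(k+N-2)}{r^2}-\frac{\gamma}{r^\alpha}+\omega-p\phi^{p-1}$; triviality of $\ker L_1$ is then equivalent to triviality of $\ker L_{1,k}$ in every sector. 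For $N=1$ the same role is played by the decomposition of $w$ into its even and odd parts, which $L_1$ preserves because the potential and $\phi$ are even.

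The radial sector $k=0$ (respectively the even sector when $N=1$) is precisely the assertion that the positive radial solution of \eqref{eq:ode} is nondegenerate, which is the content of the refined Shioji--Watanabe-type nondegeneracy result for radial solutions established earlier in the paper; I would simply invoke it to conclude $w_0\equiv0$. Here one also uses Proposition~\ref{prop:re} and standard elliptic regularity to ensure that kernel elements are smooth away from the origin and decay exponentially, so that they are genuine radial solutions to which that result applies.

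For the nonradial sectors $k\ge1$ the key point is that, although translation invariance is broken by the potential, the radial derivative $-\phi'>0$ is still a strict positive supersolution in the first sector. Differentiating \eqref{eq:sp} gives $L_1(\partial_j\phi)=-\gamma\alpha|x|^{-\alpha-2}x_j\phi$, whose radial profile reads $L_{1,1}(-\phi')=\gamma\alpha r^{-\alpha-1}\phi>0$. Writing an arbitrary sector element as $w_1=(-\phi')v$ and performing the ground-state (Picone-type) substitution, the radial quadratic form becomes $\langle L_{1,1}w_1,w_1\rangle=\int_0^\infty (L_{1,1}(-\phi'))(-\phi')v^2\,r^{N-1}\,dr+\int_0^\infty (\phi')^2(v')^2\,r^{N-1}\,dr$, where both integrands are nonnegative and the first is strictly positive wherever $v\ne0$. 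Hence $L_{1,1}$ is positive definite and $\ker L_{1,1}=\{0\}$. For $k\ge2$ one has $L_{1,k}=L_{1,1}+\frac{k(k+N-2)-(N-1)}{r^2}$ with a strictly positive coefficient, so positivity is inherited and $\ker L_{1,k}=\{0\}$ as well; the one-dimensional odd sector is handled by the same substitution on $(0,\infty)$ together with the boundary condition $w(0)=0$.

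The main obstacle I anticipate is the rigorous justification of the ground-state substitution near the origin: for $1\le\alpha<2$ the profile $\phi'$ is unbounded (like $r^{1-\alpha}$), the potential is singular, and one must verify that the boundary terms produced by the integration by parts vanish at $r=0$ and at $r=\infty$. This requires precise control of the behavior of $w_k$ and $\phi'$ at the origin, drawn from Proposition~\ref{prop:re} and the ODE \eqref{eq:ode}, together with the exponential decay at infinity. A secondary technical point is to set up the spherical-harmonic reduction as an orthogonal decomposition adapted to the singular potential, so that projecting $L_1w=0$ onto each sector is legitimate.
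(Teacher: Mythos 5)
Your proposal shares the paper's skeleton: spherical-harmonic decomposition for $N\ge2$ (even/odd splitting for $N=1$), the radial sector handled by the refined radial nondegeneracy results (Theorems~\ref{thm:nondode} and \ref{thm:nondode1}), and the sectors $k\ge1$ handled by comparison with the strict supersolution $-\phi'>0$ (Lemma~\ref{lem:delnega}), whose key property $L_{1,1}(-\phi')=\gamma\alpha r^{-\alpha-1}\phi>0$ is exactly the quantity driving the paper's identity \eqref{eq:fphiw}. The genuine difference is the finishing mechanism. The paper integrates the paired ODEs over a nodal interval $(\sigma_0,\tau_0)$ of $w_{j_0}$ and derives a contradiction from \emph{one-sided} sign estimates on the Wronskian quantity $\xi_j=f(\phi''w_j-\phi'w_j')$ at the endpoints; interior nodal endpoints come for free (there $w_j=0$ and $w_j'$ has a definite sign), and only $\sigma_0=0$ or $\tau_0=\infty$ require asymptotics, which is what assumptions \eqref{item:B12infty}--\eqref{item:B12} are tailored to. Your global Picone identity instead requires the boundary term at the origin, which equals $r^{N-1}(\phi')^2vv'=\tfrac{w_j}{-\phi'}\,\xi_j$ (the same object, multiplied by $v$), to actually \emph{vanish} for an arbitrary kernel element, since $v$ carries no global sign information; so the obstacle you flag is real, but it is fillable with the same ingredients the paper develops: continuity of $w$ with $w_j(0)=0$ plus H\"older regularity give $w_j=O(r^\beta)$, the representation $r^{N-1}w_j'(r)=\int_0^r\tau^{N-1}\bigl(g-ph\phi^{p-1}+\mu_j\tau^{-2}\bigr)w_j\,d\tau$ (the analogue of \eqref{eq:expphi'}) gives $w_j'=O(r^{\beta-1})$, and the ODE gives $\phi''/\phi'=O(1/r)$, whence the boundary term is $O(r^{N-2+2\beta})\to0$ for $N\ge2$; for $N=1$ (where necessarily $\alpha<1$) one uses $w\in C^1(\R)$ as in \eqref{item:B12wC1}. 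In exchange for this extra work at $r=0$, your argument yields a strictly stronger conclusion---positive definiteness of $L_1$ on each nonradial sector, not merely triviality of its kernel---and dispenses with the nodal-interval bookkeeping; the paper's version never divides by $\phi'$, which degenerates at the origin (vanishing like $r^{1-\alpha}$ for $\alpha<1$, blowing up for $\alpha>1$), and this is precisely why its hypotheses can be kept one-sided.
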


Nondegeneracy of positive radial solutions for \eqref{eq:sp} without potential ($\gamma=0$) are well-known (see, e.g., \cite{ChanGustNakaTsai07, Kwon89, NiTaka93, Wein85}), and its generalizations are given by \cite{ByeoOshi08, KabeTana99, ShioWata16} (see also references therein).

\subsection{Strategy and difficulty}
The proof of Theorems~\ref{thm:unique} and \ref{thm:nond} are based on Shioji and Watanabe~\cite{ShioWata16}.
We use a generalized Poho\v{z}aev identity introduced in \cite{ShioWata13} to prove our results.

The difficulty of the proof of uniqueness results arises from the singularity of the potential $-\gamma|x|^{-\alpha}$. The derivative of positive radial solutions $\phi(r)$ of \eqref{eq:sp} has the same order as $r^{1-\alpha}$ around the origin. In particular, when $1<\alpha<2$, the solution form a cusp at the origin. In \cite{ShioWata16}, to establish the uniqueness results for the equation \eqref{eq:gsp}, the following assumptions are imposed:
\begin{align}\label{eq:nonasmp1}
  & \lim_{r\searrow0}U(r)
   =0,
\\&\label{eq:nonasmp2}
    \limsup_{r\searrow0}a(r)<\infty,\quad\limsup_{r\searrow0}|b(r)|<\infty,
\\&\label{eq:nonasmp2'}
    \lim_{r\searrow0}a(r)g(r)=\lim_{r\searrow0}a(r)h(r)=0 \end{align}
(see \eqref{eq:U} for the definition of $U$ and see Proposition~\ref{prop:Pohozaev} for the definitions of $a$ and $b$). Note that $U$ is an upper bound of $\phi'$ around the origin. The assumptions \eqref{eq:nonasmp1} and \eqref{eq:nonasmp2} are used to obtain
\begin{equation} \label{eq:nonasmp3}
\lim_{r\searrow 0}a(r)\phi'(r)^2=\lim_{r\searrow 0}b(r)\phi'(r)=0.
\end{equation}
The assumption~\eqref{eq:nonasmp2'} and \eqref{eq:nonasmp3} lead to
\begin{equation}\label{eq:JX}
    \liminf_{r\searrow0}J(r,\phi)\ge0,\quad
    \liminf_{r\searrow0}X(r)\le0 \end{equation}
(see \eqref{eq:Jr} and \eqref{eq:X(r)} for the definition of $J$ and $X$, respectively). Inequalities \eqref{eq:JX} are essential in the proof of uniqueness results. However, in our cases, \eqref{eq:nonasmp1}--\eqref{eq:nonasmp3} cannot be satisfied because of the singularity of $g$ and so $\phi'$. Therefore, we cannot obtain the uniqueness in the same way as in \cite{ShioWata16}. In our proof here we rewrite $J$ and $X$ more suitable forms and then verify \eqref{eq:JX} under weaker assumptions than \cite{ShioWata16} (see Remarks~\ref{rem:3:11} and \ref{rem:cmoSW} below for more details).

In the proof of nondegeneracy results, we come across the similar difficulties. By carefully analyzing the behavior of functions around the origin, we also relax the assumptions of the results of \cite{ShioWata16} (see Remark~\ref{rem:nondege}).

\subsection{Organization of this paper}
In Section~\ref{sec:rsgs} we consider regularity and symmetry of ground states. We prepare some useful lemmas to prove Propositions~\ref{prop:re} and \ref{prop:gs} in Subsection~\ref{subsec:pre}. We prove Proposition~\ref{prop:re} in Subsection~\ref{subsec:2.2} and prove Proposition~\ref{prop:gs} in Subsection~\ref{subsec:2.3} by using the argument in \cite[Section~8]{Caze03}. 

In Section~\ref{sec:ugs} we consider uniqueness of ground states. 
We state our uniqueness results of positive solutions for \eqref{eq:gode} (Theorems~\ref{thm:guniqueness} and \ref{thm:guniquess1d}) in Subsection~\ref{subsec:3.1}, which are refinements of the results of \cite{ShioWata16}. Then we prove these theorems in Subsection~\ref{subseq:guni}, and prove Theorem~\ref{thm:unique} by applying them in Subsection~\ref{subseq:uni}.

In Section~\ref{sec:ndgs} we consider nondegeneracy of ground states. 
We state our nondegeneracy results of the unique positive radial solution for \eqref{eq:gsp} (Theorems~\ref{thm:gnondege} and \ref{thm:gnondege1}) in Subsection~\ref{subsec:4.1}. Then we prove them in Subsection~\ref{subsec:gnondege} by the argument of \cite{ShioWata16} with some refinements. We apply them to prove Theorem~\ref{thm:nond} in Subsection~\ref{subsec:nond}.

In section~\ref{sec:A} we discuss orbital instability of the ground-state standing wave.


\section{Regularity and symmetry of ground states}\label{sec:rsgs}

\subsection{Preliminaries} \label{subsec:pre}

We define the Nehari functional by
\[
    K_\omega(v)
   \ce\partial_\lambda S_\omega(\lambda v)|_{\lambda=1}
   =\|\nabla v\|_{L^2}^2
    +\omega\|v\|_{L^2}^2
    -\gamma\int_{\mb{R}^N}\frac{|v(x)|^2}{|x|^\alpha}
    -\|v\|_{L^{p+1}}^{p+1}. \]
We use the following variational characterization of ground states for \eqref{eq:sp} in the proof of Proposition~\ref{prop:gs}.

\begin{lemma}\label{lem:gs}
Assume \eqref{eq:asmp}.
Then the ground states of \eqref{eq:sp} are characterized as
\[
    \mc{G}_\omega
   =\left\{\phi\in H^1(\mb{R}^N)\mathrel{}\middle|\mathrel{}
    \begin{aligned}
    &\phi\ne0,~K_\omega(\phi)=0, \\
    &S_\omega(\phi)
    =\inf\{S_\omega(\psi)\mid\psi\in H^1(\mb{R}^N),~\psi\ne0,~K_\omega(\psi)=0\}
    \end{aligned} \right\}. \]
\end{lemma}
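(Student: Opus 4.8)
The plan is to use the fact that the Nehari functional is nothing but the radial derivative of the action, $K_\omega(v)=\langle S_\omega'(v),v\rangle$, so that every nontrivial critical point of $S_\omega$ automatically lies on the Nehari set $\mc{N}\ce\{v\in H^1(\R^N)\mid v\ne0,~K_\omega(v)=0\}$. Writing $d_{\mr{c}}\ce\inf\{S_\omega(\psi)\mid\psi\ne0,~S_\omega'(\psi)=0\}$ and $d_{\mr{N}}\ce\inf\{S_\omega(\psi)\mid\psi\in\mc{N}\}$, this inclusion gives $d_{\mr{N}}\le d_{\mr{c}}$ for free. The whole statement then reduces to the reverse inequality together with the identification of minimizers, and both follow once I show that any minimizer of $S_\omega$ over $\mc{N}$ is in fact a free critical point of $S_\omega$.

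First I would record the homogeneity structure. Splitting the action into its quadratic part $Q(v)\ce\|\nabla v\|_{L^2}^2+\omega\|v\|_{L^2}^2-\gamma\int_{\R^N}|v|^2|x|^{-\alpha}\,dx$ and the nonlinear part $\|v\|_{L^{p+1}}^{p+1}$, Euler's relation for homogeneous functionals yields $\langle K_\omega'(v),v\rangle=2Q(v)-(p+1)\|v\|_{L^{p+1}}^{p+1}$. On $\mc{N}$ one has $Q(v)=\|v\|_{L^{p+1}}^{p+1}$, hence $\langle K_\omega'(v),v\rangle=(1-p)\|v\|_{L^{p+1}}^{p+1}<0$ for every $v\in\mc{N}$, using $p>1$ and $v\ne0$. (That $Q$ and $\|\cdot\|_{L^{p+1}}^{p+1}$ define $C^1$ functionals on $H^1(\R^N)$ — the only delicate term being the singular one, controlled by H\"older's and Sobolev's inequalities under $0<\alpha<\min\{N,2\}$ exactly as in the well-definedness of $S_\omega$ recorded above — I take for granted.)

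Next, let $\psi$ attain $d_{\mr{N}}$ over $\mc{N}$; such a minimizer is supplied by the standard subcritical minimization underlying Proposition~\ref{prop:evgs}. Since $\langle K_\omega'(\psi),\psi\rangle\ne0$, we have $K_\omega'(\psi)\ne0$, so $\mc{N}$ is a genuine $C^1$ constraint near $\psi$ and the Lagrange multiplier rule gives $\mu\in\R$ with $S_\omega'(\psi)=\mu K_\omega'(\psi)$. Pairing with $\psi$ and using $K_\omega(\psi)=\langle S_\omega'(\psi),\psi\rangle=0$ yields $0=\mu\langle K_\omega'(\psi),\psi\rangle=\mu(1-p)\|\psi\|_{L^{p+1}}^{p+1}$, which forces $\mu=0$. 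Hence $S_\omega'(\psi)=0$, so $\psi$ is a nontrivial critical point and $d_{\mr{c}}\le S_\omega(\psi)=d_{\mr{N}}$. Together with the opposite inequality this gives $d_{\mr{c}}=d_{\mr{N}}$, and the two descriptions then coincide verbatim: a nontrivial critical point $\phi$ with $S_\omega(\phi)=d_{\mr{c}}$ lies in $\mc{N}$ with $S_\omega(\phi)=d_{\mr{N}}$, while conversely a point $\phi\in\mc{N}$ with $S_\omega(\phi)=d_{\mr{N}}$ is itself a minimizer of $S_\omega$ on $\mc{N}$, hence a critical point by the Lagrange argument applied to it, with $S_\omega(\phi)=d_{\mr{c}}$.

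I do not expect a deep obstacle, since this is a textbook Nehari-manifold argument; the one step demanding genuine care is the vanishing of the Lagrange multiplier, i.e.\ the transversality estimate $\langle K_\omega'(v),v\rangle\ne0$ on $\mc{N}$, which is exactly where $p>1$ is used. The only other point to keep honest is that the singular potential term must not spoil the $C^1$ regularity of $S_\omega$ and $K_\omega$ nor the strict positivity $\|v\|_{L^{p+1}}^{p+1}>0$; the former is guaranteed by the standing assumption $0<\alpha<\min\{N,2\}$ through the same H\"older--Sobolev estimate already invoked to define the action on $H^1(\R^N)$.
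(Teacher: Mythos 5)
Your proof is correct and follows essentially the same route as the paper, whose own ``proof'' of this lemma is simply a citation to \cite{FukaOhta19}; the argument given there is exactly this standard Nehari-manifold characterization (nontrivial critical points lie on the Nehari set, so $d_{\mathrm{N}}\le d_{\mathrm{c}}$, and a Nehari minimizer is a free critical point because $\langle K_\omega'(v),v\rangle=(1-p)\|v\|_{L^{p+1}}^{p+1}<0$ forces the Lagrange multiplier to vanish). The one substantive ingredient you import---attainment of the Nehari infimum, which is genuinely needed here and cannot be replaced by soft arguments---is precisely what the cited existence theory supplies, so invoking it is consistent with how the paper itself treats this lemma.
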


\begin{proof}
See \cite[Section~2]{FukaOhta19}.
\end{proof}

\begin{lemma}\label{lem:Knega}
Assume \eqref{eq:asmp} and let $\phi\in\mc{G}_\omega$. If $v\in H^1(\mb{R}^N)$ satisfies $K_\omega(v)<0$, then $\|v\|_{L^{p+1}}>\|\phi\|_{L^{p+1}}$.
\end{lemma}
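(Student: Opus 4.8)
The plan is to rescale $v$ onto the Nehari manifold $\{K_\omega=0\}$ and then invoke the variational characterization of $\mc{G}_\omega$ from Lemma~\ref{lem:gs}. First I would isolate the quadratic part
\[
    Q(v)
   \ce\|\nabla v\|_{L^2}^2
    -\gamma\int_{\mb{R}^N}\frac{|v(x)|^2}{|x|^\alpha}\,dx
    +\omega\|v\|_{L^2}^2,
\]
so that $S_\omega(v)=\tfrac12 Q(v)-\tfrac1{p+1}\|v\|_{L^{p+1}}^{p+1}$ and $K_\omega(v)=Q(v)-\|v\|_{L^{p+1}}^{p+1}$. The key structural fact is that $Q$ is positive definite: by the definition of $\omega_0$ (applied to $v/\|v\|_{L^2}$ and scaled back) one has $\|\nabla v\|_{L^2}^2-\gamma\int_{\mb{R}^N}|v(x)|^2|x|^{-\alpha}\,dx\ge-\omega_0\|v\|_{L^2}^2$, whence
\[
    Q(v)\ge(\omega-\omega_0)\|v\|_{L^2}^2>0
    \quad\text{for every }v\ne0,
\]
using $\omega>\omega_0$ from \eqref{eq:asmp}.

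Next, since $K_\omega(v)<0$ forces $v\ne0$, and hence $Q(v)>0$ and $\|v\|_{L^{p+1}}>0$, I would rescale. For $\lambda>0$,
\[
    K_\omega(\lambda v)
   =\lambda^2 Q(v)-\lambda^{p+1}\|v\|_{L^{p+1}}^{p+1},
\]
which, because $p>1$, vanishes at exactly one positive value
\[
    \lambda_0
   \ce\left(\frac{Q(v)}{\|v\|_{L^{p+1}}^{p+1}}\right)^{1/(p-1)}.
\]
The hypothesis $K_\omega(v)<0$ says precisely that $Q(v)<\|v\|_{L^{p+1}}^{p+1}$, so $\lambda_0<1$. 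Thus $\lambda_0 v$ is a nonzero point of the Nehari manifold $\{K_\omega=0\}$.

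Finally I would exploit that on this manifold the action reduces to a multiple of the $L^{p+1}$-norm: if $K_\omega(w)=0$ then $Q(w)=\|w\|_{L^{p+1}}^{p+1}$, and therefore $S_\omega(w)=\frac{p-1}{2(p+1)}\|w\|_{L^{p+1}}^{p+1}$. Applying this to $\phi$ and to $\lambda_0 v$, the minimality of $\phi$ given by Lemma~\ref{lem:gs} yields $S_\omega(\phi)\le S_\omega(\lambda_0 v)$, i.e.\ $\|\phi\|_{L^{p+1}}^{p+1}\le\lambda_0^{p+1}\|v\|_{L^{p+1}}^{p+1}$; since $0<\lambda_0<1$ the right-hand side is strictly less than $\|v\|_{L^{p+1}}^{p+1}$, which gives the claim. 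The argument is essentially routine, and the only step requiring genuine care is the positive definiteness of $Q$: this is where the spectral assumption $\omega>\omega_0$ is used, and it is what guarantees both that the rescaling factor $\lambda_0$ is well defined and positive and that the Nehari manifold is nonempty. Everything else is an elementary scaling comparison.
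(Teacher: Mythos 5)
Your proof is correct: the paper itself offers no argument for this lemma, deferring to \cite[Lemma~2.3]{FukaOhta19}, and your scaling argument (rescale $v$ by $\lambda_0\in(0,1)$ onto the Nehari manifold, use that $S_\omega=\frac{p-1}{2(p+1)}\|\cdot\|_{L^{p+1}}^{p+1}$ there, and invoke the minimality from Lemma~\ref{lem:gs}) is precisely the standard proof given in that reference. The one step worth having spelled out, as you did, is the positivity $Q(v)\ge(\omega-\omega_0)\|v\|_{L^2}^2>0$ coming from $\omega>\omega_0$, which makes $\lambda_0$ well defined.
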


\begin{proof}
See \cite[Lemma~2.3]{FukaOhta19}.
\end{proof}

To prove Proposition~\ref{prop:gs} we use the uniqueness of positive eigenfunctions of Schr\"odinger operators.

\begin{lemma}\label{lem:unief}
Suppose that $V\in L_{\mr{loc}}^1(\mb{R}^N)$ is bounded above and that
\[
    E_0
   \ce\inf\left\{\|\nabla v\|_{L^2}^2+\int_{\mb{R}^N}V(x)|v|^2\,dx
    \mathrel{}\middle|\mathrel{}
    v\in H^1(\mb{R}^N),~\|v\|_{L^2}=1\right\}>-\infty. \]
If $\psi\in H^1(\mb{R})$ is a nonnegative normalized eigenfunction of the operator $-\Delta+V(x)$, then $\psi$ is the unique positive eigenfunction corresponding to the eigenvalue $E_0$.
\end{lemma}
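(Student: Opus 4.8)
The plan is to identify $\psi$, up to normalization, with the unique minimizer of the Rayleigh quotient defining $E_0$, by combining a positivity argument with the \emph{ground-state representation} (Agmon's substitution). Write $Q(v)\ce\|\nabla v\|_{L^2}^2+\int_{\mb{R}^N}V(x)|v|^2\,dx$ for the associated quadratic form, so that $E_0=\inf\{Q(v)\mid\|v\|_{L^2}=1\}$. Since $Q(v)=Q(\Re v)+Q(\Im v)$ and $\|v\|_{L^2}^2=\|\Re v\|_{L^2}^2+\|\Im v\|_{L^2}^2$, the infimum is unchanged over real- versus complex-valued $v$, so I would work with real functions; moreover $|\nabla|v||=|\nabla v|$ a.e.\ gives $Q(|v|)=Q(v)$, so passing to the modulus never raises $Q$.

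First I would upgrade $\psi\ge0$ to $\psi>0$. Let $\lambda$ denote the eigenvalue, $-\Delta\psi+V\psi=\lambda\psi$, so that testing against $\psi$ gives $\lambda=Q(\psi)\ge E_0$. Choosing a constant $c\ge(\sup V)-\lambda$ we have $\lambda-V+c\ge0$, whence $-\Delta\psi+c\psi=(\lambda-V+c)\psi\ge0$ weakly, i.e.\ $\psi$ is a nonnegative supersolution of the constant-coefficient operator $-\Delta+c$ with $c\ge0$. The weak Harnack inequality for such an operator (whose coefficients are smooth, so that no regularity of $V$ is needed at this step) forces $\psi\equiv0$ or $\psi>0$ a.e.; since $\|\psi\|_{L^2}=1$ we get $\psi>0$ a.e. It is precisely the hypothesis that $V$ is bounded \emph{above} that makes $\lambda-V+c$ nonnegative and lets us absorb the (possibly singular) negative part of $V$ into the right-hand side.

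The heart of the argument is the identity, valid for $v=\psi u$ with $\psi>0$,
\[
    Q(\psi u)-\lambda\|\psi u\|_{L^2}^2
   =\int_{\mb{R}^N}\psi^2|\nabla u|^2\,dx\ge0,
\]
obtained by expanding $|\nabla(\psi u)|^2$, integrating the cross term by parts, and using $(V-\lambda)\psi=\Delta\psi$ from the eigenequation. Since $\psi>0$ a.e., every $v\in H^1(\mb{R}^N)$ can be written as $v=\psi u$, so this yields $Q(v)\ge\lambda\|v\|_{L^2}^2$ for all $v$, hence $E_0\ge\lambda$; combined with $\lambda\ge E_0$ we conclude $\lambda=E_0$, so $\psi$ attains $E_0$. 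For uniqueness, if $\tilde\psi$ is another positive normalized eigenfunction then $Q(\tilde\psi)=E_0=E_0\|\tilde\psi\|_{L^2}^2$, which is equality in the displayed identity with $u=\tilde\psi/\psi$; therefore $\nabla(\tilde\psi/\psi)=0$, so $\tilde\psi=c\psi$, and normalization together with positivity forces $c=1$, i.e.\ $\tilde\psi=\psi$.

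The main obstacle will be the rigorous justification of the ground-state representation under the weak hypotheses $V\in L^1_{\mr{loc}}$ and $V$ bounded above: a priori $\Delta\psi$ is only a distribution and $u=v/\psi$ need not lie in $H^1$, so the integration by parts cannot be carried out naively. I would handle this by the standard regularization $\psi_\ve\ce\psi+\ve$, applying the identity to $u_\ve\ce v/\psi_\ve$ against a compactly supported Lipschitz cutoff, and then letting the cutoff exhaust $\mb{R}^N$ and $\ve\searrow0$, controlling the error terms via the local integrability of $V$ and the decay of $\psi$. As a more elementary fallback for the uniqueness half, one can avoid the substitution entirely: given two linearly independent real minimizers, a suitable combination $w=\psi_1-t\psi_2$ vanishes at a point while $|w|$ remains a nonnegative minimizer, hence an eigenfunction at level $E_0$, contradicting the strict positivity established above.
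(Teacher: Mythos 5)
Your proposal is correct and takes essentially the same route as the paper: the paper's entire proof of this lemma is the citation \cite[Sections~11.8 and 11.9]{LiebLoss01}, and your argument is a self-contained reconstruction of exactly that material---strict positivity of nonnegative eigenfunctions via a Harnack-type inequality, identification of the eigenvalue with $E_0$ via the ground-state substitution made rigorous by the $\psi+\ve$ regularization (i.e.\ testing the equation against $v^2/(\psi+\ve)$ and using the pointwise Schwarz inequality), and uniqueness from the equality case. One caveat on your ``elementary fallback'' only: under $V\in L_{\mr{loc}}^1$ an eigenfunction need not be continuous, so ``vanishes at a point'' is not available, and one should instead choose $t$ so that $w=\psi_1-t\psi_2$ is $L^2$-orthogonal to $\psi_2$ and use that every real eigenfunction at level $E_0$ has a.e.\ constant sign (split $w=w_+-w_-$ and note both parts would otherwise be positive minimizers with disjoint supports); since this is only a backup for a step your main argument already handles, it does not affect correctness.
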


\begin{proof}
See \cite[Sections~11.8 and 11.9]{LiebLoss01}.
\end{proof}

The following rearrangement inequalities are useful to prove the radial decreasing property of ground states.

\begin{lemma}\label{lem:ri}
Let $v\in H^1(\mb{R}^N)$ be a nonnegative function, $v^*$ be its symmetric-decreasing rearrangement,
and $V\in L_{\mr{loc}}^1(\mb{R}^N)$ be a nonnegative, radial, and strictly decreasing function.
Then
\begin{align}
\notag
\|\nabla v^*\|_{L^2}^2
&\le\|\nabla v\|_{L^2}^2, \\
\label{eq:rai}
\int_{\mb{R}^N}V(x)v(x)^2\,dx
&\le\int_{\mb{R}^N}V(x)v^*(x)^2\,dx.
\end{align}
Moreover, there is equality in \eqref{eq:rai} if and only if $v=v^*$. 
\end{lemma}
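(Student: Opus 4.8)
The plan is to treat the two inequalities separately, invoking the classical rearrangement theory for the inequalities themselves and reserving the real work for the equality statement. The first inequality is precisely the Pólya--Szegő inequality, so I would simply cite its standard proof (see \cite[Lemma~7.17]{LiebLoss01}): via the coarea formula and the isoperimetric inequality, symmetric-decreasing rearrangement does not increase the Dirichlet energy.

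For \eqref{eq:rai} I would appeal to the Hardy--Littlewood rearrangement inequality, $\int_{\mb{R}^N} fg\,dx\le\int_{\mb{R}^N} f^*g^*\,dx$ for nonnegative measurable $f,g$ (see \cite[Theorem~3.4]{LiebLoss01}). Two observations reduce the claim to this: since $V$ is nonnegative, radial and decreasing, it equals its own rearrangement, $V=V^*$; and since $t\mapsto t^2$ is increasing on $[0,\infty)$, rearrangement commutes with squaring, $(v^2)^*=(v^*)^2$. Taking $f=V$ and $g=v^2$ then yields
\[
    \int_{\mb{R}^N} V v^2\,dx
   \le\int_{\mb{R}^N} V^*(v^2)^*\,dx
   =\int_{\mb{R}^N} V (v^*)^2\,dx,
\]
which is \eqref{eq:rai}.

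The genuinely delicate part is the equality case, which I would settle by the layer-cake (Cavalieri) representation. Writing $V=\int_0^\infty\chi_{\{V>s\}}\,ds$ and $v^2=\int_0^\infty\chi_{E_t}\,dt$ with $E_t\ce\{v^2>t\}$, Fubini turns each side of \eqref{eq:rai} into a double integral over $(s,t)$ of $|\{V>s\}\cap E_t|$, respectively $|\{V>s\}\cap E_t^*|$, where $E_t^*$ is the centered ball with $|E_t^*|=|E_t|$. Each superlevel set $\{V>s\}$ is a centered ball $B_{r(s)}$, and because $V$ is \emph{strictly} decreasing the radius $r(s)$ sweeps out all of $(0,\infty)$. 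For any centered ball one checks the elementary bound $|B_r\cap E|\le|B_r\cap E^*|$ (distinguishing $|E|\le|B_r|$ from $|E|>|B_r|$), with equality holding for \emph{every} $r>0$ only when $E$ agrees with $E^*$ up to a null set. Hence equality of the double integrals, combined with the full range of radii supplied by the strict monotonicity of $V$, forces $E_t=E_t^*$ for a.e.\ $t$; this gives $v^2=(v^*)^2$, and since $v\ge0$ we conclude $v=v^*$. The converse is immediate.

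The main obstacle is precisely this propagation from equality of the integrated quantities down to the level-set identity $E_t=E_t^*$, where the strict monotonicity of $V$ is indispensable: without it, redistributing mass within a region on which $V$ is constant would preserve the integral while destroying $v=v^*$. Making the Fubini interchange and the a.e.-in-$(s,t)$ reduction rigorous is routine but must be done with some care, since it is here that the hypothesis on $V$ is actually consumed.
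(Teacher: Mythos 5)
Your proposal is correct and follows essentially the same route as the paper: the paper's proof is simply the citation \cite[Sections~7.17, 3.3~(v), and 3.4]{LiebLoss01}, i.e.\ P\'olya--Szeg\H{o} for the gradient term, $(v^2)^*=(v^*)^2$ together with $V=V^*$, and the Hardy--Littlewood inequality with its strict-equality case for strictly symmetric-decreasing $V$. Your layer-cake argument for the equality case is precisely the standard proof of that strict version of \cite[Theorem~3.4]{LiebLoss01}, so you have unpacked a citation rather than taken a genuinely different path.
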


\begin{proof}
See \cite[Sections~7.17, 3.3~(v), and 3.4]{LiebLoss01}.
\end{proof}

\subsection{Regularity}\label{subsec:2.2}

Now we prove Proposition~\ref{prop:re} by using the similar argument as in \cite[Section~8]{Caze03}. To avoid dealing with the singularity of the potential at the origin, we use a cutoff function.

\begin{proof}[Proof of Proposition~\ref{prop:re}]
Since $\tilde\phi(x)=\omega^{-1/(p-1)}\phi(x/\sqrt{\omega})$ satisfies the equation
\begin{equation}\label{eq:sptil}
    -\Delta\tilde\phi
    +\tilde\phi
    -\frac{\tilde\gamma}{|x|^\alpha}\tilde\phi
    -|\tilde\phi|^{p-1}\tilde\phi
   =0,\quad x\in\mb{R}^N \end{equation}
with $\tilde\gamma=\omega^{(\alpha-2)/2}\gamma$, we can assume that $\omega=1$ without loss of generality.

First we show that $\phi\in C^2(\mb{R}^N\setminus\{0\})$ and $|\partial^k\phi(x)|\to0$ as $|x|\to\infty$ for any multiindex  $|k|\le2$.
Let $\chi\in C^\infty(\mb{R}^N)$ be a function such that 
\[
    \chi(x)
   =\begin{cases}
    0,&\text{if $|x|\le1/2$}, \\
    1,&\text{if $|x|\ge1$} \end{cases} \]
and put $\chi_\delta(x)=\chi(x/\delta)$. Multiplying $\chi_\delta$ by the equation~\eqref{eq:sp} with $\omega=1$, we have
\begin{equation} \label{eq:spchi}
    -\Delta(\chi_\delta\phi)
    +\chi_\delta\phi
   =\left(\frac{\gamma}{|x|^\alpha}\chi_\delta+|\phi|^{p-1}\chi_\delta-\Delta\chi_\delta\right)\phi
    -2\nabla\phi\cdot\nabla\chi_\delta. \end{equation}
Since $\nabla\chi_\delta$ and $\Delta\chi_\delta$ belong to $C_\mr{c}^\infty(\mb{R}^N)$, and $|x|^{-\alpha}\chi_\delta$ belongs to
\[
    C^\infty_0(\mb{R}^N)
   =\{v\in C^\infty(\mb{R}^N)\mid
    |\partial^k v(x)|\to0~\text{as $|x|\to\infty$ for any multiindex $k\in\mb{Z}_+^N$}\}, \]
by a bootstrap argument (see \cite[Proof of Theorem~8.1.1]{Caze03}) we see that $\chi_\delta\phi\in W^{3,q}(\mb{R}^N)$ for all $q\in[2,\infty)$. In particular, we have $\chi_\delta\phi\in C^{2,\beta}(\mb{R}^N)$ for all $0<\beta<1$, and $\chi_\delta\phi$ and $\nabla(\chi_\delta\phi)$ are Lipschitz continuous. Since $\chi_\delta\phi=\phi$ on $\{|x|>\delta\}$ for any $\delta>0$, we have $\phi\in C^2(\mb{R}^N\setminus\{0\})$ and $|\partial^k\phi(x)|\to0$ as $|x|\to\infty$ for any multiindex $|k|\le2$.

Next, we show the exponential decay. Let $\theta_\ve(x)=e^{|x|/(1+\ve|x|)}$. Then $\theta_\ve$ is bounded, and $|\nabla\theta_\ve|\le\theta_\ve\le e^{|x|}$ for all $x\in\mb{R}^N$. Taking the scalar product of the equation \eqref{eq:spchi} with $\delta=1$ and $\theta_\ve\chi\phi$, we have
\begin{align}\label{eq:ineq1}
  & \Re\int_{\mb{R}^N}\nabla(\chi\phi)\cdot\nabla(\theta_\ve\chi\ol\phi)\,dx
    +\int_{\mb{R}^N}\theta_\ve|\chi\phi|^2\,dx 
\\\notag
  & \quad=\int_{\mb{R}^N}\left(\frac{\gamma}{|x|^\alpha}\chi
    +|\phi|^{p-1}\chi
    -\Delta\chi\right)\theta_\ve\chi|\phi|^2\,dx
    -2\Re\int_{\mb{R}^N}\nabla\phi\cdot\nabla\chi\theta_\ve\chi\ol\phi\,dx 
\\\notag
  & \quad\le\int_{\mb{R}^N}\left(\frac{\gamma}{|x|^\alpha}+|\phi|^{p-1}\right)\theta_\ve|\chi\phi|^2\,dx 
    +C\int_{|x|<1}e^{|x|}(|\nabla\phi|^2+|\phi|^2)\,dx. \end{align}
For any $v\in H^1(\mb{R}^N)$, from $|\nabla\theta_\ve|\le\theta_\ve$ and $\Re(\ol{v}\nabla\theta_\ve\cdot\nabla v)\ge-\theta_\ve|\nabla v|^2/2-\theta_\ve|v|^2/2$, we have
\[
    \Re(\nabla v\cdot\nabla(\theta_\ve v))
   =\theta_\ve|\nabla v|^2+\Re(\ol v\nabla\theta_\ve\cdot\nabla v)
    \ge\frac{\theta_\ve|\nabla v|^2}{2}
    -\frac{\theta_\ve|v|^2}{2}. \]
Using this with $v=\chi\phi$ we have
\begin{equation}\label{eq:ineq2}
    \frac12\int_{\mb{R}^N}\theta_\ve(|\nabla(\chi\phi)|^2+|\chi\phi|^2)\,dx
   \le\Re\int_{\mb{R}^N}\nabla(\chi\phi)\cdot\nabla(\theta_\ve\chi\ol\phi)\,dx
    +\int_{\mb{R}^N}\theta_\ve|\chi\phi|^2\,dx. \end{equation}
Since $\phi(x)\to0$ as $|x|\to\infty$, we can take $R>1$ so that $\gamma|x|^{-\alpha}+|\phi|^{p-1}<1/4$ for all $|x|>R$. Then we have
\begin{equation}\label{eq:ineq3}
    \int_{\mb{R}^N}\left(\frac{\gamma}{|x|^\alpha}+|\phi|^{p-1}\right)\theta_\ve|\chi\phi|^2\,dx
   \le\frac14\int_{\mb{R}^N}\theta_\ve|\chi\phi|^2\,dx
    +C\int_{|x|<R}e^{|x|}|\phi|^2\,dx \end{equation}
with $C=\|(\gamma|x|^{-\alpha}+|\phi|^{p-1})\chi\|_{L^\infty}<\infty$. Combining the estimates~\eqref{eq:ineq1}, \eqref{eq:ineq2}, and \eqref{eq:ineq3} we obtain
\[
    \int_{\mb{R}^N}\theta_\ve(|\nabla(\chi\phi)|^2+|\chi\phi|^2)\,dx
   \lesssim\|\phi\|_{H^1}^2, \]
where the implicit constant does not depend on $\ve$. Letting $\ve\searrow0$, we obtain
\[
    \int_{\mb{R}^N}e^{|x|}(|\nabla(\chi\phi)|^2+|\chi\phi|^2)\,dx
   <\infty. \]
Since $\chi\phi$ and $\nabla(\chi\phi)$ are Lipschitz continuous, we see that there exists a positive constant $C>0$ such that
\[
    |\chi\phi(x)|+|\nabla(\chi\phi)|
   \le Ce^{-|x|/(N+2)} \]
for all $x\in\mb{R}^N$ (see \cite[Proof of Theorem~8.1.1]{Caze89}). Since $\chi\phi=\phi$ on $\{|x|>1\}$, we have \eqref{eq:expdecay}.

Finally, we show the regularity at the origin. We also see by a bootstrap argument for \eqref{eq:sptil} that there exists $\beta\in(0,1)$ such that $\phi\in C^{0,\beta}(\mb{R}^N)$. In addition, it is easy to check that $\phi\in C^1(\mb{R}^N)$ if $0<\alpha<1$. This completes the proof.
\end{proof}

\subsection{Symmetry}\label{subsec:2.3}

\begin{lemma}\label{lem:posi}
Let $\phi\in\mc{G}_\omega$. Then there exists $\theta\in\mb{R}$ such that $e^{i\theta}\phi$ is a positive function.
\end{lemma}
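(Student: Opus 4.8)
The plan is to prove that any ground state $\phi\in\mc{G}_\omega$ is, up to a constant phase, a positive function. First I would use the fact that $S_\omega$ and $K_\omega$ depend on $v$ only through $|\nabla v|$, $|v|$, and the $L^{p+1}$-norm, so that replacing $\phi$ by $|\phi|$ does not increase the action and preserves the Nehari constraint. More precisely, for $\phi\in H^1(\R^N;\mb{C})$ one has the diamagnetic-type inequality $\|\nabla|\phi|\|_{L^2}\le\|\nabla\phi\|_{L^2}$ together with the pointwise identities $\bigl||\phi|\bigr|=|\phi|$ and $\bigl\||\phi|\bigr\|_{L^{p+1}}=\|\phi\|_{L^{p+1}}$. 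Hence $K_\omega(|\phi|)\le K_\omega(\phi)=0$ and $S_\omega(|\phi|)\le S_\omega(\phi)$.

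Next I would argue that $|\phi|$ is itself a minimizer. If $K_\omega(|\phi|)<0$, then Lemma~\ref{lem:Knega} forces $\||\phi|\|_{L^{p+1}}>\|\phi\|_{L^{p+1}}$, contradicting $\||\phi|\|_{L^{p+1}}=\|\phi\|_{L^{p+1}}$; therefore $K_\omega(|\phi|)=0$. Combined with $S_\omega(|\phi|)\le S_\omega(\phi)$ and the variational characterization in Lemma~\ref{lem:gs}, this shows $|\phi|\in\mc{G}_\omega$, so in particular $|\phi|$ is a nonnegative solution of \eqref{eq:sp}. By Proposition~\ref{prop:re}, $|\phi|$ is continuous, and a strong maximum principle argument (applied to the equation satisfied by $|\phi|$ away from the origin, treating $\gamma|x|^{-\alpha}|\phi|+|\phi|^p$ as a nonnegative source and using $\omega>\omega_0>0$) yields that $|\phi|>0$ everywhere; thus $\phi$ never vanishes.

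Since $\phi$ is nonvanishing and continuous, I can write $\phi(x)=|\phi(x)|e^{i\theta(x)}$ with a continuous phase $\theta$. The key point is that equality must hold in the diamagnetic inequality $\|\nabla|\phi|\|_{L^2}\le\|\nabla\phi\|_{L^2}$, because otherwise $S_\omega(|\phi|)<S_\omega(\phi)$, contradicting that $\phi$ is already a minimizer. The equality case in the diamagnetic inequality (see \cite[Section~7.21 and Theorem~7.8]{LiebLoss01}) precisely characterizes when $\nabla\theta\equiv0$, i.e.\ the phase is constant. Therefore there exists $\theta\in\mb{R}$ with $\phi=e^{i\theta}|\phi|$, and $e^{-i\theta}\phi=|\phi|$ is positive, as claimed.

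The main obstacle I anticipate is the rigorous treatment of the equality case in the diamagnetic inequality in the presence of the singular potential $-\gamma|x|^{-\alpha}$ and the possible lack of smoothness of $\phi$ at the origin. One must ensure that the pointwise factorization $\phi=|\phi|e^{i\theta}$ with a well-defined gradient holds $H^1$-almost everywhere and that the origin — a single point of measure zero — does not obstruct the equality-case analysis; the nonvanishing of $|\phi|$ established via the maximum principle is exactly what makes the phase $\theta$ well defined and $H^1_{\mr{loc}}$ away from issues at $0$. Invoking the sharp equality characterization from \cite{LiebLoss01} rather than reproving it keeps this step clean.
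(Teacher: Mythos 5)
Your proof is correct, but it takes a genuinely different route from the paper's. The paper avoids the modulus $|\phi|$ and the diamagnetic inequality altogether: it sets $v=|\Re\phi|$, $w=|\Im\phi|$, $\psi=v+iw$, for which $|\psi|=|\phi|$ and $|\nabla\psi|=|\nabla\phi|$ are exact identities (for real $u\in H^1$ one has $|\nabla|u||=|\nabla u|$ a.e.), so $\psi\in\mc{G}_\omega$ follows from Lemma~\ref{lem:gs} alone, with no need for Lemma~\ref{lem:Knega} in this step. Since $\psi$ then solves \eqref{eq:sp} and $|\psi|=|\phi|$, both $v$ and $w$ are nonnegative eigenfunctions of the \emph{linear} operator $-\Delta-\gamma|x|^{-\alpha}-|\phi|^{p-1}$ with eigenvalue $-\omega$, so the Perron--Frobenius-type Lemma~\ref{lem:unief} gives $v=\mu f$ and $w=\nu f$ for a single positive eigenfunction $f$; continuity then forces $\Re\phi=af$, $\Im\phi=bf$, hence $\phi=(a+ib)f$, which is a constant phase times $f$. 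That one lemma delivers simultaneously the two facts you had to establish separately: nonvanishing of $\phi$ (from $f>0$, with no maximum principle for a singular potential) and phase constancy (with no equality-case analysis of the diamagnetic inequality). Your route instead uses the same device the paper reserves for radial symmetry (Lemma~\ref{lem:raddec}): an inequality that can only lower the action, restored to the Nehari manifold via Lemma~\ref{lem:Knega}. It is sound, and your ordering is the right one (nonvanishing must precede the phase factorization); just note that the maximum principle should be applied to the distributional supersolution inequality $-\Delta|\phi|+\omega|\phi|\ge0$ on all of $\R^N$, not merely away from the origin, so that $|\phi|(0)>0$ as well --- a point that genuinely matters when $N=1$, since $\R\setminus\{0\}$ is disconnected and a vanishing point at the origin would a priori allow different constant phases on the two half-lines.
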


\begin{proof}
Let $v=\lvert\Re\phi\rvert$, $w=\lvert\Im\phi\rvert$, and let $\psi=v+iw$. We may assume that $v\ne0$ by a phase modulation. Then we have $|\psi|=|\phi|$ and $|\nabla\psi|=|\nabla\phi|$, and thus $K_\omega(\psi)=K_\omega(\phi)$ and $S_\omega(\phi)=S_\omega(\psi)$. Therefore, Lemma~\ref{lem:gs} implies $\psi\in\mc{G}_\omega$.

Since $\psi$ be a solution of \eqref{eq:sp}, $v$ and $w$ satisfy
\[
    \left\{\begin{aligned}
    (-\Delta 
    -\frac{\gamma}{|x|^\alpha}
    -|\phi|^{p-1})v
  &=-\omega v, \\
    (-\Delta 
    -\frac{\gamma}{|x|^\alpha}
    -|\phi|^{p-1})w
  &=-\omega w. \end{aligned}\right. \]
This means that $v$ and $w$ are nonnegative eigenvectors of the operator $-\Delta-\gamma|x|^{-\alpha}-|\phi|^{p-1}$. Therefore, by Lemma~\ref{lem:unief} there exist a positive function $f>0$ and constants $\mu>0$ and $\nu\ge0$ such that $v=\mu f$ and $w=\nu f$. Since $v$ and $w$ are continuous by Proposition~\ref{prop:re} and since $f$ is positive, we see that $\Re\phi$ and $\Im\phi$ do not change sign. This implies that there exist constants $a\in\mb{R}\setminus\{0\}$ and $b\in\mb{R}$ such that $\Re\phi=af$ and $\Im\phi=bf$. Taking $\theta\in\mb{R}$ so that $e^{-i\theta}=(a+ib)/|a+ib|$, we obtain $e^{i\theta}\phi=e^{i\theta}(a+ib)f=|a+ib|f$. This completes the proof.
\end{proof}

\begin{lemma}\label{lem:raddec}
Let $\phi\in\mc{G}_\omega$ be a positive ground state.
Then $\phi$ is a radial and nonincreasing function.
\end{lemma}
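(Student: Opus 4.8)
The plan is to prove that the positive ground state $\phi$ agrees with its symmetric-decreasing rearrangement $\phi^*$; since $\phi^*$ is by construction radial and nonincreasing, this yields the lemma. First I would record that the rearrangement is equimeasurable, so that $\|\phi^*\|_{L^2}=\|\phi\|_{L^2}$ and $\|\phi^*\|_{L^{p+1}}=\|\phi\|_{L^{p+1}}$, and that $\phi^*\in H^1(\mb{R}^N)$ with $\phi^*\ne0$. The potential weight $V(x)=\gamma|x|^{-\alpha}$ is nonnegative, radial, strictly decreasing, and lies in $L^1_{\mr{loc}}(\mb{R}^N)$ because $0<\alpha<N$; hence Lemma~\ref{lem:ri} applies to it and gives both $\|\nabla\phi^*\|_{L^2}^2\le\|\nabla\phi\|_{L^2}^2$ and $\int_{\mb{R}^N}\gamma|x|^{-\alpha}|\phi|^2\,dx\le\int_{\mb{R}^N}\gamma|x|^{-\alpha}|\phi^*|^2\,dx$, with equality in the latter holding exactly when $\phi=\phi^*$.

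Next I would compare the Nehari functional. Writing out $K_\omega(\phi^*)$ and using the preservation of the $L^2$- and $L^{p+1}$-norms together with the two inequalities above, the gradient term can only decrease while the (negative) potential term $-\gamma\int|x|^{-\alpha}|\cdot|^2$ can only decrease as well, so $K_\omega(\phi^*)\le K_\omega(\phi)$. Since $\phi$ is a ground state it solves \eqref{eq:sp}, whence $K_\omega(\phi)=\langle S_\omega'(\phi),\phi\rangle=0$; thus $K_\omega(\phi^*)\le0$. If this inequality were strict, Lemma~\ref{lem:Knega} applied to $v=\phi^*$ would force $\|\phi^*\|_{L^{p+1}}>\|\phi\|_{L^{p+1}}$, contradicting equimeasurability. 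Therefore $K_\omega(\phi^*)=0$.

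Finally, the identity $K_\omega(\phi^*)-K_\omega(\phi)=(\|\nabla\phi^*\|_{L^2}^2-\|\nabla\phi\|_{L^2}^2)-\gamma\bigl(\int|x|^{-\alpha}|\phi^*|^2-\int|x|^{-\alpha}|\phi|^2\bigr)=0$ exhibits $0$ as a sum of two nonpositive quantities, each of which must then vanish. In particular we obtain equality in \eqref{eq:rai} for $\phi$ and $V=\gamma|x|^{-\alpha}$, and the equality clause of Lemma~\ref{lem:ri} forces $\phi=\phi^*$, so $\phi$ is radial and nonincreasing. I expect the only genuinely delicate point to be this last step: one must invoke the strict monotonicity of the weight so that the equality case of the rearrangement inequality pins down $\phi$ itself (not merely up to translation). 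The hypotheses of Lemma~\ref{lem:ri} are tailored to this, so the argument closes; the small technical check to keep in mind is the preliminary verification that $\gamma|x|^{-\alpha}$ meets those hypotheses (radial, strictly decreasing, and locally integrable, using $0<\alpha<N$).
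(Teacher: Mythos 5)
Your proposal is correct and follows essentially the same route as the paper's proof: compare $K_\omega(\phi^*)$ with $K_\omega(\phi)=0$ via Lemma~\ref{lem:ri}, rule out strict inequality using Lemma~\ref{lem:Knega} and equimeasurability, and then invoke the equality clause of Lemma~\ref{lem:ri} to conclude $\phi=\phi^*$. The only cosmetic difference is that you split the equality case into two vanishing nonpositive terms, while the paper contradicts strict inequality in \eqref{eq:rearr} directly; the substance is identical.
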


\begin{proof}
Let $\phi^*$ be the symmetric-decreasing rearrangement of $\phi$. By the definition of the rearrangement, we have $\|\phi^*\|_{L^{q}}=\|\phi\|_{L^{q}}$ for all $1\le q\le\infty$. By Lemma~\ref{lem:ri} we also have $\|\nabla \phi^*\|_{L^2}\le\|\nabla \phi\|_{L^2}$ and
\begin{equation}\label{eq:rearr}
    \int_{\mb{R}^N}\frac{\phi(x)^2}{|x|^\alpha}\,dx
   \le\int_{\mb{R}^N}\frac{\phi^*(x)^2}{|x|^\alpha}\,dx. \end{equation}

If the equality in \eqref{eq:rearr} does not hold, we have $K_\omega(\phi^*)<K_\omega(\phi)$. Therefore, by Lemma~\ref{lem:Knega} we obtain $\|\phi^*\|_{L^{p+1}}>\|\phi\|_{L^{p+1}}$. This is a contradiction, and thus the equality in \eqref{eq:rearr} holds.

Moreover, by Lemma~\ref{lem:ri} we obtain $\phi=\phi^*$. This means that $\phi$ is radial and nonincreasing. This completes the proof.
\end{proof}

\begin{proof}[Proof of Proposition~\ref{prop:gs}]
Proposition~\ref{prop:gs} follows from Lemma~\ref{lem:posi} and \ref{lem:raddec}. For strict decrease of $\phi$, see Lemma~\ref{lem:delnega} below.
\end{proof}

\section{Uniqueness of ground states} \label{sec:ugs}

\subsection{Sufficient conditions for uniqueness}\label{subsec:3.1}

To give sufficient conditions for uniqueness of positive solutions for \eqref{eq:gode}, we use the following generalized Poho\v{z}aev identity introduced in \cite{ShioWata13}.

\begin{proposition}[{\cite[Section~2]{ShioWata13}}]\label{prop:Pohozaev}
Let $p>1$, $f,h\in C^3(0,\infty)$ be positive functions, $g\in C^1(0,\infty)$, and $\phi$ be a positive solution of \eqref{eq:gode}. Then
\begin{equation} \label{eq:gPi}
    \frac{d}{dr}J(r;\phi)
   =G(r)\phi(r)^2 \end{equation}
for all $r>0$, where
\begin{align}\label{eq:Jr}
    J(r;\phi)
  &=\frac12a(r)\phi'(r)^2
    +b(r)\phi'(r)\phi(r)
    +\frac12c(r)\phi(r)^2 \\ \notag
  & \qquad-\frac12a(r)g(r)\phi(r)^2
    +\frac{1}{p+1}a(r)h(r)\phi(r)^{p+1}, \\ \notag
    a(r)
  &=f(r)^{2(p+1)/(p+3)}h(r)^{-2/(p+3)},\quad
\\ \notag
    b(r)
  &=-\frac12a'(r)
    +\frac{f'(r)}{f(r)}a(r),
\\ \notag
    c(r)
  &=-b'(r)
    +\frac{f'(r)}{f(r)}b(r),
\\ \notag
    G(r)
  &=b(r)g(r)
    +\frac12c'(r)
    -\frac12(ag)'(r). \end{align}
\end{proposition}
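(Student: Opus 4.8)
The plan is to prove the identity by direct differentiation, using the ODE \eqref{eq:gode} to eliminate $\phi''$ wherever it appears, and then checking that the resulting expression organizes into monomials whose coefficients match the claimed structure. First I would differentiate each of the five terms in $J(r;\phi)$ by the product rule, which produces among other things the second-order terms $a\phi'\phi''$ and $b\phi\phi''$. Solving \eqref{eq:gode} for $\phi''=-(f'/f)\phi'+g\phi-h\phi^p$ and substituting turns these into a combination of the monomials $(\phi')^2$, $\phi\phi'$, $\phi^p\phi'$, $\phi^2$, and $\phi^{p+1}$. The whole derivative $\frac{d}{dr}J(r;\phi)$ is then a linear combination of exactly these five monomials, and the task reduces to computing the coefficient of each.

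For the coefficient of $(\phi')^2$ I would collect $\frac12 a'+b-(f'/f)a$, which vanishes identically once the definition $b=-\frac12 a'+(f'/f)a$ is inserted. Similarly, the coefficient of $\phi\phi'$ is $b'+c-(f'/f)b$ (the two terms $\pm ag\phi\phi'$, one coming from differentiating $-\frac12 ag\phi^2$ and one from the substitution, cancel), which vanishes by the definition $c=-b'+(f'/f)b$. The coefficient of $\phi^p\phi'$ is $ah-ah=0$, since the contribution $ah\phi^p\phi'$ from differentiating the nonlinear term is exactly cancelled by the $-ah\phi^p\phi'$ produced by the substitution. Finally, the coefficient of $\phi^2$ collects $\frac12 c'-\frac12(ag)'+bg$, which is precisely $G(r)$ by definition. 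This already reproduces the claimed right-hand side $G(r)\phi^2$, provided the one remaining coefficient vanishes.

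The crux of the computation, and the step I expect to be the real point, is showing that the coefficient of $\phi^{p+1}$ vanishes. Gathering it gives $\frac{1}{p+1}(ah)'-bh$, so what must be verified is the algebraic relation $(ah)'=(p+1)bh$, equivalently $\frac{p+3}{2}\frac{a'}{a}+\frac{h'}{h}=(p+1)\frac{f'}{f}$ after dividing by the positive quantity $ah$ and inserting the definition of $b$. This is exactly where the specific exponents in $a=f^{2(p+1)/(p+3)}h^{-2/(p+3)}$ enter: taking the logarithmic derivative gives $\frac{a'}{a}=\frac{2(p+1)}{p+3}\frac{f'}{f}-\frac{2}{p+3}\frac{h'}{h}$, and multiplying by $\frac{p+3}{2}$ yields exactly $(p+1)\frac{f'}{f}-\frac{h'}{h}$, so the relation holds identically. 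In other words, $a$ is engineered precisely so that the nonlinear term contributes nothing to $\frac{d}{dr}J$. The regularity hypotheses $f,h\in C^3(0,\infty)$, $g\in C^1(0,\infty)$, and $\phi\in C^2(0,\infty)$ ensure that all the derivatives $a'$, $b'$, $c'$, $(ag)'$, $(ah)'$ appearing above exist and are continuous on $(0,\infty)$, so the term-by-term differentiation is justified pointwise there, and the identity \eqref{eq:gPi} follows for all $r>0$.
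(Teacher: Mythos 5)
Your proof is correct, and it is essentially the expected one: the paper itself does not prove this proposition but quotes it from \cite[Section~2]{ShioWata13}, where the identity is obtained by exactly this kind of direct computation, with $a$, $b$, $c$ constructed so that the coefficients of $(\phi')^2$, $\phi\phi'$, $\phi^p\phi'$, and $\phi^{p+1}$ all cancel after substituting $\phi''=-(f'/f)\phi'+g\phi-h\phi^p$. Your verification of the key relation $(ah)'=(p+1)bh$ via the logarithmic derivative of $a=f^{2(p+1)/(p+3)}h^{-2/(p+3)}$ correctly identifies the role of the exponents, so nothing is missing.
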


To state our results, we prepare some notations as follows:
\begin{align}\label{eq:U}
    U(r)
  &\ce\frac{1}{f(r)}\int_0^rf(\tau)\bigl(|g(\tau)|+h(\tau)\bigr)\,d\tau,
\\ \label{eq:V}
    V(r)
  &\ce\frac{1}{f(r)}\int_0^rf(\tau)h(\tau)\,d\tau,
\\ \label{eq:D}
    D(r)
  &\ce b(r)^2-a(r)\bigl(c(r)-a(r)g(r)\bigr). \end{align}

We impose the following assumptions.

\newcounter{enumimemory}

\begin{enumerate}[(I)]
\item\label{item:fgh}
$f,h\in C^3(0,\infty)$ are positive functions, and $g\in C^1(0,\infty)$.
\item \label{item:R}
There exists $R>0$ such that
\begin{enumerate}
\item \label{item:Ri}
$f(|g|+h)\in L^1(0,R)$,
\item \label{item:Rii}
$\tau\mapsto f(\tau)\bigl(|g(\tau)|+h(\tau)\bigr)\int_\tau^Rf(\sigma)^{-1}d\sigma\in L^1(0,R)$,
\item \label{item:Riii}
$1/f\notin L^1(0,R)$.
\end{enumerate}
\item \label{item:ab}
$\lim_{r\searrow0}a(r)U(r)V(r)=\lim_{r\searrow0}b(r)V(r)=0$.
\item \label{item:N2}
One of the following is satisfied.
\begin{enumerate}
\item \label{item:bcag}
$\lim_{r\searrow0}b(r)U(r)=0$ and $\liminf_{r\searrow0}\bigl(c(r)-a(r)g(r)\bigr)\in[0,\infty]$,
\item \label{item:Da}
$\limsup_{r\searrow0}D(r)/a(r)\le0$.
\end{enumerate}
\item \label{item:N3}
$G^-\ce\min\{G,0\}\not\equiv0$, and one of the following is satisfied.
\begin{enumerate}
\item \label{item:G1}
there exists $\kappa\in[0,\infty]$ such that $G\ge0$ on $(0,\kappa)$ and $G\le0$ on $(\kappa,\infty)$.
\item\label{item:G2ii}
$\{r>0\mid G(r)=0,~D(r)>0\}=\emptyset$.
\end{enumerate}
\setcounter{enumimemory}{\value{enumi}}
\end{enumerate}

The following is our uniqueness results.

\begin{theorem} \label{thm:guniqueness}
Let $p>1$ and assume \eqref{item:fgh}--\eqref{item:N3}. Then the problem \eqref{eq:gode} has at most one positive solution $\phi$ which satisfies $J(r;\phi)\to0$ as $r\to\infty$.
\end{theorem}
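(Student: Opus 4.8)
The plan is to argue by contradiction: suppose \eqref{eq:gode} admits two distinct positive solutions $\phi_1\ne\phi_2$, each with $J(r;\phi_i)\to0$ as $r\to\infty$. The engine of the argument is the generalized Pohozaev identity \eqref{eq:gPi}, namely $\frac{d}{dr}J(r;\phi)=G(r)\phi(r)^2$. Integrating it from $r$ to $\infty$ and using $J(\infty;\phi_i)=0$ gives $J(r;\phi_i)=-\int_r^\infty G(s)\phi_i(s)^2\,ds$, which converts the sign structure \eqref{item:N3} of $G$ into monotonicity of $J$. From the relation $(f\phi')'=f(g\phi-h\phi^p)$ one gets $|\phi_i'(r)|\lesssim U(r)$ near the origin, with the finer $h$-contribution controlled by $V$ in \eqref{eq:V}; this is the a-priori input for the boundary analysis.

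The first genuine step is to establish $\liminf_{r\searrow0}J(r;\phi)\ge0$. Writing $J$ as in \eqref{eq:Jr}, I would split it into the quadratic form $Q=\tfrac12a\phi'^2+b\phi'\phi+\tfrac12(c-ag)\phi^2$ and the manifestly nonnegative remainder $\tfrac1{p+1}ah\phi^{p+1}$ (here $a,h>0$). Only $b\phi'\phi$ and $\tfrac12(c-ag)\phi^2$ can be negative, and \eqref{item:N2} is precisely tailored to control them: in case \eqref{item:bcag}, $bU\to0$ forces $b\phi'\phi\to0$ while $\liminf(c-ag)\ge0$ handles the quadratic coefficient; in case \eqref{item:Da}, $\limsup D/a\le0$ makes the discriminant $D=b^2-a(c-ag)$ of \eqref{eq:D} nonpositive, so $Q\ge0$ in the limit. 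The nonlinear and potential cross-products are absorbed by \eqref{item:ab}: I expect the rewriting of $J$ to trade the crude estimate $a\phi'^2\lesssim aU^2$ for the weaker vanishing products $aUV$ and $bV$, which is exactly what lets the singular case through. Combined with the monotonicity from \eqref{item:N3} — increasing on $(0,\kappa)$ then decreasing in \eqref{item:G1}, or no interior zero of $G$ where $D>0$ in \eqref{item:G2ii} — this yields $J(r;\phi)\ge0$ on all of $(0,\infty)$ for each solution.

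To pass from the sign of $J$ for one solution to uniqueness I would compare $\phi_1$ and $\phi_2$ through the companion functional $X$ (the comparison object of Shioji and Watanabe), whose $r$-derivative is again governed by $G$ and $D$, and whose endpoint behavior I would analyze by the same origin estimates to obtain $X(\infty)=0$ together with $\liminf_{r\searrow0}X\le0$. Feeding these one-sided bounds into the monotone–separation (Sturm-type) argument of \cite{ShioWata16} — which is what the sign hypothesis \eqref{item:N3} and the discriminant conditions are designed to drive — rules out any interior separation of the two solution curves, forcing $\phi_1\equiv\phi_2$ and contradicting $\phi_1\ne\phi_2$.

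The main obstacle is the origin analysis of the two middle paragraphs. Because $g$ is singular, $\phi_i'$ does not vanish at $0$ (indeed $|\phi_i'(r)|\sim r^{1-\alpha}$ once $\alpha\ge1$), so the Shioji–Watanabe requirements $a\phi'^2\to0$ and $b\phi'\to0$ genuinely fail in our range. The delicate point is to keep the possibly divergent terms of $J$ (and of $X$) grouped so that the definiteness of $Q$ — via $\liminf(c-ag)\ge0$ or $\limsup D/a\le0$ — compensates for the blow-up of $\phi_i'$, and to verify the vanishing products of \eqref{item:ab} rather than the stronger, and here false, separate limits. This refined bookkeeping at $r\searrow0$ is the crux of the theorem.
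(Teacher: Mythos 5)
Your proposal follows essentially the same route as the paper's proof: the generalized Poho\v{z}aev identity, the two-case proof of $\liminf_{r\searrow0}J(r;\phi)\ge0$ (the grouping \eqref{eq:Jexp1} with $bU\to0$ under \eqref{item:bcag}, completing the square as in \eqref{eq:Jexp2} under \eqref{item:Da}), the global sign $J\ge0$ from \eqref{item:N3}, and the comparison functional $X=\eta^2J(\cdot;\phi)-J(\cdot;\psi)$ with endpoint analysis at $0$ and $\infty$ feeding the Shioji--Watanabe monotonicity argument. The only blemish is your middle paragraph's claim that \eqref{item:ab} enters the $J$-analysis: it does not (in \eqref{eq:Jexp1} the terms $\tfrac12a\phi'^2$ and $\tfrac1{p+1}ah\phi^{p+1}$ are nonnegative and need not vanish, so \eqref{item:N2} alone suffices); assumption \eqref{item:ab} is needed exactly where your final paragraph places it, namely to control $X$ near the origin after rewriting it in terms of $\eta'=O(V)$ as in \eqref{eq:Xexpli} (Lemma~\ref{lem:Xrto0}).
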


Theorem~\ref{thm:guniqueness} is applicable to our case~\eqref{eq:fgh} under \eqref{eq:asmp} and $N\ge2$ but not applicable in the one-dimensional case because \eqref{item:Riii} is not satisfied; $f(r)\equiv 1$. We need this assumption only to derive $\lim_{r\searrow0}f(r)\phi'(r)=0$ (see Lemma~\ref{lem:phi'} below). Therefore, the following result also holds, which is applicable to the one-dimensional case (see also Remark~\ref{rem:N=1}).

\begin{theorem} \label{thm:guniquess1d}
Assume the same assumptions as in Theorem~\ref{thm:guniqueness} except for \eqref{item:Riii}. Then the problem \eqref{eq:gode} has at most one positive solution $\phi$ which satisfies $J(r;\phi)\to0$ as $r\to\infty$ and $f(r)\phi'(r)\to0$ as $r\searrow0$.
\end{theorem}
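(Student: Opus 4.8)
The plan is to prove Theorem~\ref{thm:guniquess1d} as a direct consequence of the proof of Theorem~\ref{thm:guniqueness}, observing that the two statements differ only in a single, well-localized point: assumption~\eqref{item:Riii} is removed, while the boundary behavior $f(r)\phi'(r)\to 0$ as $r\searrow 0$---which \eqref{item:Riii} was used to produce---is instead imposed on the admissible solutions. I would therefore first pin down exactly where \eqref{item:Riii} enters the argument for Theorem~\ref{thm:guniqueness}, and then verify that the rest of that proof never uses it.

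Next I would recall the role of \eqref{item:Riii}. Rewriting \eqref{eq:gode} in divergence form $(f\phi')'=f(g\phi-h\phi^{p})$ and integrating over $(0,r)$, assumption~\eqref{item:Ri} makes the right-hand side integrable near the origin, so the limit $L\ce\lim_{r\searrow 0}f(r)\phi'(r)$ exists. If $L\ne0$ then $\phi'(r)\sim L/f(r)$ as $r\searrow0$, and $1/f\notin L^1(0,R)$---that is, \eqref{item:Riii}---forces $\phi$ to be unbounded at the origin, contradicting $\phi\in C[0,\infty)$. This is precisely Lemma~\ref{lem:phi'}: under \eqref{item:Riii} every positive solution satisfies $f\phi'\to0$ at $0$. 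In Theorem~\ref{thm:guniquess1d} this conclusion is taken as a hypothesis, so Lemma~\ref{lem:phi'} is simply replaced by the assumption and no appeal to \eqref{item:Riii} is needed to secure $f\phi'\to0$.

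The substantive step is then to confirm that $f\phi'\to0$ is the \emph{only} consequence of \eqref{item:Riii} used in the uniqueness argument. The two limiting relations \eqref{eq:JX}, namely $\liminf_{r\searrow 0}J(r;\phi)\ge0$ and $\liminf_{r\searrow0}X(r)\le0$ (with $X$ as in \eqref{eq:X(r)}), are what drive the proof at the origin, and in the refined argument they are obtained from the origin-behavior hypotheses~\eqref{item:ab} and \eqref{item:N2} together with $f\phi'\to0$; the quantity $f\phi'$ is the natural boundary datum precisely because of the divergence form above. The remaining, global ingredients---the generalized Poho\v{z}aev identity \eqref{eq:gPi}, the integration out to $r=\infty$ using $J(r;\phi)\to0$, and the sign structure of $G$ encoded in \eqref{item:N3}---do not involve \eqref{item:Riii} at all. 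Granting this, two distinct positive solutions in the restricted class (decaying with $J\to0$ at infinity and $f\phi'\to0$ at the origin) lead to the same contradiction as in Theorem~\ref{thm:guniqueness}, giving uniqueness.

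The main obstacle is the bookkeeping in the third step: one must trace the full proof of Theorem~\ref{thm:guniqueness} and check that \eqref{item:Riii} is invoked nowhere except through Lemma~\ref{lem:phi'}, paying particular attention to any place where the nonintegrability of $1/f$ might be used implicitly---e.g.\ in controlling $\phi'$ near $0$ or in passing to the limit in $J$ and $X$. Once this audit is complete, Theorem~\ref{thm:guniquess1d} follows with essentially no additional analysis.
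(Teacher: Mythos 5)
Your proposal is correct and takes essentially the same approach as the paper: the paper justifies Theorem~\ref{thm:guniquess1d} exactly by the observation that \eqref{item:Riii} enters the proof of Theorem~\ref{thm:guniqueness} only through Lemma~\ref{lem:phi'}, i.e.\ to produce $\lim_{r\searrow0}f(r)\phi'(r)=0$, which Theorem~\ref{thm:guniquess1d} instead imposes as a hypothesis on the admissible solutions. Your audit that the remaining ingredients (the representation \eqref{eq:expphi'} and estimates \eqref{eq:phi'OU}, \eqref{eq:eV}, Lemmas~\ref{lem:Gron}, \ref{lem:Jrto0}--\ref{lem:eta'}, and the Poho\v{z}aev identity \eqref{eq:gPi}) use only $f\phi'\to0$ and never the nonintegrability of $1/f$ is precisely the paper's implicit argument.
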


\begin{remark}
The assumptions \eqref{item:fgh}, \eqref{item:R}, and \eqref{item:N3} are imposed in \cite{ShioWata16}. The assumptions \eqref{item:ab} and \eqref{item:N2} are weaker than the assumptions \eqref{eq:nonasmp1}--\eqref{eq:nonasmp2'} imposed in \cite{ShioWata16}. The assumptions \eqref{item:ab} and \eqref{item:N2} are only used to obtain Lemma~\ref{lem:Xrto0} and Lemma~\ref{lem:Jrto0} below, respectively.
\end{remark}

\begin{remark}
The functions $U$ and $V$ are upper bounds of $\phi'$ and $(\phi/\psi)'$ around the origin, respectively, where $\phi$ and $\psi$ are positive solutions for \eqref{eq:gode} (see Lemmas \ref{lem:phi'} and \ref{lem:3}).
\end{remark}

\subsection{Proof of Theorem~\ref{thm:guniqueness}} \label{subseq:guni}

Throughout this subsection we impose the same assumptions as in Theorem~\ref{thm:guniqueness}. Let $\phi$ and $\psi$ be positive solutions of \eqref{eq:gode} which satisfy $J(r;\phi)\to0$ and $J(r;\psi)\to0$ as $r\to\infty$.

\begin{lemma} \label{lem:phi'}
The following holds:
\begin{align} \label{eq:fphi'0}
&\lim_{r\searrow0}f(r)\phi'(r)=0, \\
\label{eq:expphi'}
&\phi'(r)
=\frac{1}{f(r)}\int_0^rf(\tau)\bigl(g(\tau)\phi(\tau)-h(\tau)\phi(\tau)^p\bigr)\,d\tau
\end{align}
for all $r>0$.
In particular, 
\begin{equation}\label{eq:phi'OU}
\phi'(r)=O\bigl(U(r)\bigr)\quad\text{as $r\searrow0$},
\end{equation} 
where $U$ is the function defined in \eqref{eq:U}.
\end{lemma}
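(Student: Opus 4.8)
The plan is to put the ODE in \eqref{eq:gode} into divergence form and then exploit the integrability hypotheses of assumption~\eqref{item:R}. Since $\phi''+(f'/f)\phi'=f^{-1}(f\phi')'$, the equation is equivalent to
\[
    (f\phi')'(r)=f(r)\bigl(g(r)\phi(r)-h(r)\phi(r)^p\bigr),\quad r>0.
\]
Because $\phi\in C[0,\infty)$ with $\phi(0)>0$, both $\phi$ and $\phi^p$ are bounded on some interval $(0,R]$, so the right-hand side is dominated in modulus by $Cf(|g|+h)$, which lies in $L^1(0,R)$ by \eqref{item:Ri}. Hence $(f\phi')'\in L^1(0,R)$, and integrating from $\ve$ to $r$ and sending $\ve\searrow0$ shows that $f\phi'$ extends to an absolutely continuous function on $[0,R]$ with a finite limit $\ell\ce\lim_{r\searrow0}f(r)\phi'(r)$.

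The crux is to show $\ell=0$, and this is where the non-integrability hypothesis \eqref{item:Riii}, namely $1/f\notin L^1(0,R)$, enters decisively. First I would note that continuity of $\phi$ at the origin forces $\phi'\in L^1(0,R)$: for $0<\ve<r$ one has $\phi(r)-\phi(\ve)=\int_\ve^r\phi'(\tau)\,d\tau$, and letting $\ve\searrow0$ the left-hand side tends to $\phi(r)-\phi(0)$, so $\int_0^r\phi'$ converges. If $\ell\neq0$, then $|f(r)\phi'(r)|\ge|\ell|/2$ for all sufficiently small $r$, whence $|\phi'(r)|\ge|\ell|/(2f(r))$; integrating and invoking $1/f\notin L^1(0,R)$ would force $\phi'\notin L^1(0,R)$, a contradiction. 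Therefore $\ell=0$, which is \eqref{eq:fphi'0}.

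With $\lim_{r\searrow0}f\phi'=0$ established, integrating the divergence form of the equation from $0$ to $r$ gives \eqref{eq:expphi'} at once. Finally, \eqref{eq:phi'OU} follows by estimating the integrand in \eqref{eq:expphi'}: since $\phi$ and $\phi^p$ are bounded by a constant $C$ near the origin, $|g\phi-h\phi^p|\le C(|g|+h)$, so $|\phi'(r)|\le Cf(r)^{-1}\int_0^rf(|g|+h)\,d\tau=CU(r)$, i.e.\ $\phi'(r)=O(U(r))$ as $r\searrow0$.

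I expect the main obstacle to be the identification $\ell=0$ rather than the mere existence of the limit: existence is a routine consequence of the $L^1$ bound on $(f\phi')'$, but fixing the value relies on the interplay between integrability of $\phi'$ (from continuity at the origin) and non-integrability of $1/f$. This is precisely the step that fails when \eqref{item:Riii} is dropped, e.g.\ for $f\equiv1$ in one dimension, in which case $\ell=0$ must instead be assumed, as in Theorem~\ref{thm:guniquess1d}.
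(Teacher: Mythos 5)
Your argument is correct and is essentially the paper's own: the paper establishes \eqref{eq:fphi'0} and \eqref{eq:expphi'} by citing \cite[Lemma~1]{ShioWata13}, whose proof is exactly your divergence-form integration --- the $L^1(0,R)$ bound coming from \eqref{item:Ri} and continuity of $\phi$ at $0$ gives existence of $\ell\ce\lim_{r\searrow0}f(r)\phi'(r)$, and \eqref{item:Riii} forces $\ell=0$ --- after which \eqref{eq:phi'OU} is read off from \eqref{eq:expphi'} just as the paper does. One cosmetic point: improper convergence of $\int_0^r\phi'(\tau)\,d\tau$ does not by itself give $\phi'\in L^1(0,R)$, but this is harmless, since under the assumption $\ell\ne0$ the derivative $\phi'$ has fixed sign near the origin, so $|\phi(\delta)-\phi(\ve)|\ge\tfrac{|\ell|}{2}\int_\ve^\delta f(\tau)^{-1}\,d\tau\to\infty$ as $\ve\searrow0$ already contradicts the continuity of $\phi$ at $0$.
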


\begin{proof}
See Proof of \cite[Lemma~1]{ShioWata13} for \eqref{eq:fphi'0} and \eqref{eq:expphi'}. The estimate \eqref{eq:phi'OU} follows from the expression \eqref{eq:expphi'}.
\end{proof}

\begin{lemma} \label{lem:Gron}
If $\phi(0)=\psi(0)$, then $\phi\equiv\psi$.
\end{lemma}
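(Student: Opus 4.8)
The plan is to exploit the first-order integral representation \eqref{eq:expphi'} to recast the equality $\phi\equiv\psi$ as an integral equation for the difference, and then run a Gronwall-type absorption argument on a small interval $[0,r_0]$ near the origin, where the singularities of $g$ and of the drift $f'/f$ are concentrated. Away from the origin the coefficients are regular, so once the two solutions are shown to coincide near $0$ they must coincide everywhere by standard ODE uniqueness.

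First I would set $w\ce\phi-\psi$. Integrating \eqref{eq:expphi'} once more for both $\phi$ and $\psi$ and using $\phi(0)=\psi(0)$, I obtain
\[
w(r)=\int_0^r\frac{1}{f(s)}\int_0^sf(\tau)\bigl[g(\tau)w(\tau)-h(\tau)(\phi(\tau)^p-\psi(\tau)^p)\bigr]\,d\tau\,ds.
\]
Since $\phi,\psi\in C[0,\infty)$ are positive, they are bounded above and below by positive constants on each $[0,R]$, so the mean value theorem gives $|\phi^p-\psi^p|\le C|w|$ there, with $C$ depending only on $\phi(0)$, $R$, and $p$. Hence
\[
|w(r)|\le\int_0^r\frac{1}{f(s)}\int_0^sf(\tau)\bigl(|g(\tau)|+Ch(\tau)\bigr)|w(\tau)|\,d\tau\,ds.
\]

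Next I would introduce $M(r)\ce\sup_{0\le s\le r}|w(s)|$ and pull it out of the inner integral, which yields $|w(r)|\le M(r)\int_0^r\tilde U(s)\,ds$, where $\tilde U(s)\ce f(s)^{-1}\int_0^sf(\tau)(|g(\tau)|+Ch(\tau))\,d\tau\le\max\{1,C\}\,U(s)$ with $U$ as in \eqref{eq:U}. The crucial point is that $\int_0^rU(s)\,ds\to0$ as $r\searrow0$: interchanging the order of integration by Tonelli's theorem gives
\[
\int_0^rU(s)\,ds=\int_0^rf(\tau)\bigl(|g(\tau)|+h(\tau)\bigr)\int_\tau^r\frac{ds}{f(s)}\,d\tau,
\]
and for $r\le R$ this integrand is dominated by the $L^1(0,R)$ function supplied by assumption \eqref{item:Rii}, so absolute continuity of the Lebesgue integral yields the claim. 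Choosing $r_0$ so small that $\int_0^{r_0}\tilde U<1$, both factors $M(r)$ and $\int_0^r\tilde U$ are nondecreasing, so taking the supremum over $r\in[0,r_0]$ gives $M(r_0)\le M(r_0)\int_0^{r_0}\tilde U$, forcing $M(r_0)=0$; thus $w\equiv0$ on $[0,r_0]$.

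Finally, since $\phi\equiv\psi$ on $[0,r_0]$ forces $\phi'(r_0)=\psi'(r_0)$ and the coefficients $f'/f$, $g$, $h$ are continuous on $[r_0,\infty)$, the Picard--Lindel\"of theorem propagates $\phi\equiv\psi$ to all of $[0,\infty)$. The main obstacle is precisely the behavior at the origin: the singular drift $f'/f$ and potential $g$ prevent a direct application of uniqueness at $r=0$, and it is assumption \eqref{item:Rii}---equivalently the vanishing of $\int_0^rU$ at the origin---that makes the absorption step close.
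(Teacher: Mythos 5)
Your proposal is correct and takes essentially the same route as the paper: the paper's proof of this lemma is simply a citation to the proof of Lemma~2 in Shioji--Watanabe (2013), which is precisely this argument---twice-integrating the equation $(f\phi')'=f(g\phi-h\phi^p)$ to get an integral equation for the difference, absorbing via the smallness of $\int_0^r U$ near the origin (guaranteed by \eqref{item:Rii} after the Tonelli interchange), and then propagating by standard ODE uniqueness on $[r_0,\infty)$ where the coefficients are regular.
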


\begin{proof}
See Proof of \cite[Lemma~2]{ShioWata13}.
\end{proof}

We set for $r>0$
\begin{equation*}\label{eq:eta}
    \eta(r)
   \ce\frac{\psi(r)}{\phi(r)}. \end{equation*}

\begin{lemma}\label{lem:3}
For any $r>0$,
\begin{equation}\label{eq:eta'}
    \eta'(r)
   =-\frac{1}{\phi(r)^2f(r)}\int_0^rf(\tau)h(\tau)\bigl(\eta(\tau)^{p-1}-1\bigr)\phi(\tau)^p\psi(\tau)\,d\tau. \end{equation}
In particular, 
\begin{equation} \label{eq:eV}
\eta'(r)=O\bigl(V(r)\bigr) \quad\text{as $r\searrow0$},
\end{equation}
where $V$ is the function defined in \eqref{eq:V}.
\end{lemma}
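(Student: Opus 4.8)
The plan is to derive the formula~\eqref{eq:eta'} for $\eta'$ directly from the differential equations satisfied by $\phi$ and $\psi$, and then read off the asymptotic~\eqref{eq:eV} from the explicit integral representation. Since $\eta = \psi/\phi$, the quotient rule gives $\eta' = (\psi'\phi - \psi\phi')/\phi^2$. The key is to recognize that the numerator $W \ce \psi'\phi - \psi\phi'$ is a Wronskian-type quantity whose derivative simplifies dramatically. I would compute $W'(r) = \psi''\phi - \psi\phi''$ (the cross terms cancel), and then substitute $\phi'' = -\tfrac{f'}{f}\phi' + g\phi - h\phi^p$ and the analogous expression for $\psi''$ coming from~\eqref{eq:gode}. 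The linear potential terms $g\phi$ and the first-derivative terms combine so that $W' = -\tfrac{f'}{f}W - h\bigl(\eta^{p-1}-1\bigr)\phi^p\psi$ after collecting; crucially the $\omega$-type and $g$ contributions drop out of the cross-difference, leaving only the genuinely nonlinear discrepancy.

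With this first-order linear ODE for $W$ in hand, I would multiply through by the integrating factor $f(r)$ to obtain $(fW)' = -f\,h\,(\eta^{p-1}-1)\phi^p\psi$. Integrating from $0$ to $r$ and using the boundary behavior $\lim_{r\searrow0} f(r)W(r) = 0$ — which follows from $\lim_{r\searrow0} f(r)\phi'(r)=0$ in Lemma~\ref{lem:phi'} applied to both $\phi$ and $\psi$, together with continuity of $\phi,\psi$ at the origin — yields
\[
    f(r)W(r) = -\int_0^r f(\tau)h(\tau)\bigl(\eta(\tau)^{p-1}-1\bigr)\phi(\tau)^p\psi(\tau)\,d\tau.
\]
Dividing by $f(r)\phi(r)^2$ and recalling $\eta' = W/\phi^2$ gives exactly~\eqref{eq:eta'}.

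For the asymptotic~\eqref{eq:eV}, I would argue that since $\phi$ and $\psi$ are continuous and positive at the origin, $\eta(\tau)\to\psi(0)/\phi(0)$ as $\tau\searrow0$, so $\eta^{p-1}-1$ is bounded near $0$; likewise $\phi^p\psi$ is bounded, and $\phi(r)^2$ is bounded below by a positive constant for small $r$. Hence the absolute value of the right-hand side of~\eqref{eq:eta'} is controlled by a constant times $\tfrac{1}{f(r)}\int_0^r f(\tau)h(\tau)\,d\tau = V(r)$, which is precisely the claim $\eta'(r) = O(V(r))$.

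The main obstacle I anticipate is the careful justification of the boundary term $\lim_{r\searrow0} f(r)W(r)=0$: one must confirm that $f\psi\phi'$ and $f\phi\psi'$ each vanish, which relies on~\eqref{eq:fphi'0} for both solutions and on the continuity (hence boundedness) of $\phi$ and $\psi$ at the origin guaranteed by~\eqref{eq:gode}. The algebraic cancellation in computing $W'$ is routine but must be done attentively so that the $g$-terms and the $f'/f$-terms combine correctly; I would track them explicitly to make sure only the $h(\eta^{p-1}-1)\phi^p\psi$ term survives.
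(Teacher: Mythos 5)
Your proof is correct: the Wronskian computation for $W=\psi'\phi-\psi\phi'$, the integrating factor $f$ giving $(fW)'=-fh(\eta^{p-1}-1)\phi^p\psi$, and the justification of the boundary term $\lim_{r\searrow0}f(r)W(r)=0$ via \eqref{eq:fphi'0} applied to both solutions together with their continuity at the origin are exactly the standard derivation of \eqref{eq:eta'}, and the deduction of \eqref{eq:eV} from positivity and continuity of $\phi,\psi$ at $0$ matches the paper. This is essentially the same approach as the paper's proof, which simply cites \cite[Lemma~3]{ShioWata13} for the identity \eqref{eq:eta'} and reads off \eqref{eq:eV} from it; your write-up just fills in the details behind that citation.
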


\begin{proof}
See Proof of \cite[Lemma~3]{ShioWata13} for \eqref{eq:eta'}. The estimate \eqref{eq:eV} follows from the expression \eqref{eq:eta'}.
\end{proof}

\begin{lemma}\label{lem:Jrto0}
$\liminf_{r\searrow0}J(r;\phi)\ge0$.
\end{lemma}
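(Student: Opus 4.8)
The plan is to estimate $J(r;\phi)$ from below as $r\searrow0$ using that $\phi$ is continuous with $\phi(r)\to\phi(0)>0$, that $\phi'(r)=O(U(r))$ by \eqref{eq:phi'OU}, and the sign information $a>0$, $h>0$ built into the coefficients. First I would observe that the nonlinear term in \eqref{eq:Jr} is nonnegative: since $a,h>0$ and $\phi>0$ one has $\tfrac{1}{p+1}a(r)h(r)\phi(r)^{p+1}\ge0$. Hence $J(r;\phi)\ge Q(r)$ pointwise, where
\[
   Q(r)\ce\tfrac12 a\phi'^2+b\phi'\phi+\tfrac12\bigl(c-ag\bigr)\phi^2,
\]
and it suffices to prove $\liminf_{r\searrow0}Q(r)\ge0$. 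I would then split according to which alternative of \eqref{item:N2} is assumed.

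Under \eqref{item:bcag} I keep $Q$ as written and treat its three terms separately. The term $\tfrac12 a\phi'^2$ is nonnegative because $a>0$. The cross term obeys $|b\phi'\phi|\le C|b(r)|U(r)\,\phi(r)$ by $\phi'=O(U)$, and this tends to $0$ since $\lim_{r\searrow0}b(r)U(r)=0$ and $\phi$ is bounded. The diagonal term $\tfrac12(c-ag)\phi^2$ has nonnegative $\liminf$ because $\liminf_{r\searrow0}(c-ag)\in[0,\infty]$ while $\phi^2\to\phi(0)^2>0$. Superadditivity of $\liminf$ then gives $\liminf_{r\searrow0}Q(r)\ge0$.

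Under \eqref{item:Da} I would instead complete the square. With $a>0$ and $D$ as in \eqref{eq:D}, the pointwise identity
\[
   Q(r)=\frac{1}{2a(r)}\bigl(a(r)\phi'(r)+b(r)\phi(r)\bigr)^2-\frac{D(r)}{2a(r)}\phi(r)^2
\]
holds. The first term is nonnegative, and the second has nonnegative $\liminf$: from $\limsup_{r\searrow0}D(r)/a(r)\le0$ we get $\liminf_{r\searrow0}\bigl(-D/a\bigr)\ge0$, and $\phi^2\to\phi(0)^2>0$. Thus $\liminf_{r\searrow0}Q(r)\ge0$ in this case too.

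The only delicate points are the two ``$\liminf$ of a product'' steps — that a factor with nonnegative $\liminf$ multiplied by a factor converging to the positive constant $\phi(0)^2$ retains nonnegative $\liminf$, which I would justify by passing to a subsequence realizing the $\liminf$ — together with the completing-the-square identity, which is the crux of case \eqref{item:Da}. I expect the main obstacle to be bookkeeping: handling the two alternatives uniformly and keeping track of which hypotheses are genuinely used. In particular \eqref{item:ab} is not needed for this lemma (it is reserved for the companion estimate on $X$); everything here rests on $a,h>0$ together with the one-sided control of $c-ag$ or of $D/a$ furnished by \eqref{item:N2}.
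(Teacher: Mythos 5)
Your proof is correct and takes essentially the same route as the paper's: the paper likewise treats case \eqref{item:bcag} by reading off the terms of the expression \eqref{eq:Jexp1} (nonnegativity of $\tfrac12 a\phi'^2$ and $\tfrac{1}{p+1}ah\phi^{p+1}$, the cross term killed by $\phi'=O(U)$ and $b(r)U(r)\to0$, the $\tfrac12(c-ag)\phi^2$ term by the one-sided hypothesis), and treats case \eqref{item:Da} by completing the square, your identity for $Q$ being exactly the paper's \eqref{eq:Jexp2} with the nonnegative nonlinear term dropped. Your remark that \eqref{item:ab} is not used in this lemma also agrees with the paper, which reserves \eqref{item:ab} for Lemma~\ref{lem:Xrto0}.
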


\begin{proof}
The function $J$ can be expressed as
\begin{align}\label{eq:Jexp1}
J(r;\phi)&=b(r)\phi'(r)\phi(r)
    +\frac{1}{2}\bigl(c(r)-a(r)g(r)\bigr)\phi(r)^2
\\ \notag
  & \quad+\frac{1}{2}a(r)\phi'(r)^2
    +\frac{1}{p+1}a(r)h(r)\phi(r)^{p+1}
\\& \label{eq:Jexp2}
   =\frac{a(r)\phi(r)^2}{2}\left(\frac{\phi'(r)}{\phi(r)}+\frac{b(r)}{a(r)}\right)^2
-\frac{D(r)\phi(r)^2}{2a(r)}
+\frac{1}{p+1}a(r)h(r)\phi(r)^{p+1}.
\end{align}
Note that the functions $a$ and $h$ are positive. When \eqref{item:bcag} holds, the assertion follows from \eqref{eq:Jexp1} and \eqref{eq:phi'OU}. When \eqref{item:Da} holds, the assertion follows from \eqref{eq:Jexp2} and \eqref{eq:phi'OU}.
\end{proof}

\begin{remark} \label{rem:3:11}
In our case \eqref{eq:fgh} with $N=2$ the assumption~\eqref{item:bcag} cannot be satisfied, but the assumption~\eqref{item:Da} is satisfied. That is why we introduce the condition~\eqref{item:Da}.
\end{remark}

\begin{lemma}\label{lem:Jposi}
$J(r;\phi)\ge0$ for all $r>0$ and $J(\cdot;\phi)\not\equiv0$.
\end{lemma}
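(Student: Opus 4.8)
The plan is to show that $J(\,\cdot\,;\phi)$ is nonnegative by combining its behavior at the two ends of $(0,\infty)$ with the sign information on $G$ encoded in assumption~\eqref{item:N3}. The two endpoint ingredients are $\liminf_{r\searrow0}J(r;\phi)\ge0$ (Lemma~\ref{lem:Jrto0}) and $J(r;\phi)\to0$ as $r\to\infty$ (the standing hypothesis on $\phi$). The Poho\v{z}aev identity~\eqref{eq:gPi} gives $\frac{d}{dr}J(r;\phi)=G(r)\phi(r)^2$, and since $\phi>0$ the sign of $J'$ is exactly the sign of $G$; this is what lets me propagate the endpoint bounds inward.

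First I would treat case~\eqref{item:G1}. Here $G\ge0$ on $(0,\kappa)$ and $G\le0$ on $(\kappa,\infty)$, so $J(\,\cdot\,;\phi)$ is nondecreasing on $(0,\kappa)$ and nonincreasing on $(\kappa,\infty)$. On $(0,\kappa)$ monotonicity forces $J(r;\phi)\ge\lim_{s\searrow0}J(s;\phi)=\liminf_{s\searrow0}J(s;\phi)\ge0$ (the limit exists by monotonicity and equals the $\liminf$), and on $(\kappa,\infty)$ it forces $J(r;\phi)\ge\lim_{s\to\infty}J(s;\phi)=0$; continuity then gives $J\ge0$ at $\kappa$ as well. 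The degenerate values $\kappa\in\{0,\infty\}$ are covered by the same one-sided argument, and this case needs no contradiction.

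The harder case is~\eqref{item:G2ii}, and I expect this to be the main obstacle. Here I would argue by contradiction: suppose $m\ce\inf_{r>0}J(r;\phi)<0$. Fixing $\ve\in(m,0)$, the lower bound $\liminf_{r\searrow0}J\ge0>\ve$ produces $\delta>0$ with $J>\ve$ on $(0,\delta)$, and $J(r;\phi)\to0$ at infinity produces $R$ with $J>\ve$ on $(R,\infty)$; hence the sublevel set $\{J\le\ve\}$ lies in the compact interval $[\delta,R]\subset(0,\infty)$, so $m$ is attained at an interior point $r_*>0$. At this interior minimizer $J'(r_*)=G(r_*)\phi(r_*)^2=0$, and since $\phi>0$ we get $G(r_*)=0$. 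Assumption~\eqref{item:G2ii} then yields $D(r_*)\le0$. Substituting into the completed-square expression~\eqref{eq:Jexp2}, the first term is a square times $a(r_*)\phi(r_*)^2/2>0$, the middle term $-D(r_*)\phi(r_*)^2/(2a(r_*))$ is nonnegative because $D(r_*)\le0$ and $a>0$, and the last term $\frac{1}{p+1}a(r_*)h(r_*)\phi(r_*)^{p+1}$ is strictly positive (recall $a,h,\phi>0$); hence $J(r_*)>0$, contradicting $J(r_*)=m<0$. Thus $J\ge0$ in this case as well.

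Finally, for $J(\,\cdot\,;\phi)\not\equiv0$: if $J\equiv0$ then $J'\equiv0$, so $G(r)\phi(r)^2\equiv0$, and since $\phi>0$ this forces $G\equiv0$, whence $G^-\equiv0$, contradicting the hypothesis $G^-\not\equiv0$ in~\eqref{item:N3}. This completes the proof.
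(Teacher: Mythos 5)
Your proof is correct and follows essentially the same route as the paper: the paper's own proof simply invokes Lemma~\ref{lem:Jrto0} and refers to \cite[Proposition~4]{ShioWata16}, whose argument is exactly the one you reconstruct (monotonicity of $J$ from the sign of $G$ in case~\eqref{item:G1}, and in case~\eqref{item:G2ii} an interior negative minimum forcing $G(r_*)=0$, hence $D(r_*)\le0$, which makes the completed-square expression~\eqref{eq:Jexp2} strictly positive, plus the observation that $J\equiv0$ would force $G^-\equiv0$). The only difference is that you supply in full the details the paper leaves to the cited reference.
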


\begin{proof}
By using Lemma~\ref{lem:Jrto0}, we can show the assertion in the same way as Proof of \cite[Proposition 4]{ShioWata16}.
\end{proof}

We set
\begin{equation}
\label{eq:X(r)}
X(r)
\ce\eta(r)^2J(r;\phi)
-J(r;\psi).
\end{equation}
By \eqref{eq:gPi} we have 
\begin{equation}\label{eq:X'}
X'(r)
=2\eta(r)\eta'(r)J(r;\phi).
\end{equation}

\begin{lemma}\label{lem:Xrto0}
If $\phi(0)<\psi(0)$,
then $\limsup_{r\searrow0}X(r)\le0$.
\end{lemma}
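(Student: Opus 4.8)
The plan is to compute $X(r)$ explicitly and show that, after the obvious cancellation, it splits into a manifestly nonpositive part and a part that vanishes in the limit. Starting from the grouped form \eqref{eq:Jexp1} of $J$, I would write
\[
    J(r;\phi)
   =b\phi'\phi
    +\tfrac12\bigl(c-ag\bigr)\phi^2
    +\tfrac12 a\phi'^2
    +\tfrac{1}{p+1}ah\phi^{p+1},
\]
and the same with $\phi$ replaced by $\psi$. Since $\psi=\eta\phi$, we have $\eta^2\phi^2=\psi^2$, so in $X(r)=\eta^2J(r;\phi)-J(r;\psi)$ the terms $\tfrac12(c-ag)\phi^2$ multiplied by $\eta^2$ cancel exactly against $\tfrac12(c-ag)\psi^2$. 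This is the point of grouping $c-ag$ together: the singular and possibly large coefficient $c-ag$ disappears from $X$ entirely, so no further control on it is needed.

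For the surviving terms I would substitute $\psi=\eta\phi$ and $\psi'=\eta'\phi+\eta\phi'$. A short computation gives $\eta^2\phi'\phi-\psi'\psi=-\eta\eta'\phi^2$, $\eta^2\phi'^2-\psi'^2=-\eta'^2\phi^2-2\eta\eta'\phi\phi'$, and $\eta^2\phi^{p+1}-\psi^{p+1}=\eta^2\phi^{p+1}(1-\eta^{p-1})$, whence
\begin{align*}
X(r)
&=-\tfrac12 a(r)\phi(r)^2\eta'(r)^2
+\frac{a(r)h(r)}{p+1}\eta(r)^2\phi(r)^{p+1}\bigl(1-\eta(r)^{p-1}\bigr) \\
&\quad-a(r)\eta(r)\phi(r)\phi'(r)\eta'(r)
-b(r)\eta(r)\phi(r)^2\eta'(r).
\end{align*}
The first two terms are nonpositive near the origin: $a>0$ makes the first term $\le0$, and since $\phi(0)<\psi(0)$ with both positive and finite, $\eta(r)\to\eta(0)=\psi(0)/\phi(0)>1$, so $1-\eta^{p-1}<0$ for small $r$ (recall $p-1>0$) while $ah\eta^2\phi^{p+1}>0$. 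The last two terms are the error terms.

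To finish I would bound the error terms using $\phi'=O(U)$ from \eqref{eq:phi'OU} (Lemma~\ref{lem:phi'}) and $\eta'=O(V)$ from \eqref{eq:eV} (Lemma~\ref{lem:3}), together with the boundedness of $\eta$ and $\phi$ near $0$. This gives $|a\eta\phi\phi'\eta'|\lesssim a(r)U(r)V(r)$ and $|b\eta\phi^2\eta'|\lesssim |b(r)|V(r)$, both of which tend to $0$ by assumption~\eqref{item:ab} (using $U,V\ge0$). Writing $X=(\text{nonpositive part})+(\text{error part})$ with the error part tending to $0$, I conclude $\limsup_{r\searrow0}X(r)\le0$. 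I expect the only delicate point to be the bookkeeping in the algebraic reduction and getting every sign right; once the explicit form of $X$ above is established, the limiting argument is routine and is exactly where \eqref{item:ab} and the $O(U),O(V)$ estimates are used.
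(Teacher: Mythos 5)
Your proof is correct and is essentially the paper's own argument: after substituting $\psi=\eta\phi$ the $(c-ag)$-terms cancel, and your expanded formula for $X$ is exactly the paper's expression \eqref{eq:Xexpli} written out with $\psi'=\eta'\phi+\eta\phi'$, after which the sign analysis via $a,h>0$, $\eta(0)>1$, the estimates \eqref{eq:phi'OU}, \eqref{eq:eV}, and assumption \eqref{item:ab} matches the paper's conclusion verbatim.
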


\begin{proof}
The function $X$ can be expressed as
\begin{align}\label{eq:Xexpli0}
X(r)
&=\frac{a(r)}{2}\left(\frac{\psi(r)^2\phi'(r)^2}{\phi(r)^2}-\psi'(r)^2\right)
+b(r)\left(\frac{\psi(r)^2\phi'(r)}{\phi(r)}-\psi'(r)\psi(r)\right) \\
\notag
&\quad+\frac{1}{p+1}a(r)h(r)\psi(r)^2\left(\phi(r)^{p-1}-\psi(r)^{p-1}\right) \\
\label{eq:Xexpli}
&=-\frac{a(r)}{2}\eta'(r)\big(\psi(r)\phi'(r)+\psi'(r)\phi(r)\bigr)
-b(r)\eta'(r)\phi(r)\psi(r) \\
\notag
&\quad-\frac{1}{p+1}a(r)h(r)\psi(r)^2\phi(r)^{p-1}\bigl(\eta(r)^{p-1}-1\bigr).
\end{align}
The assertion follows from \eqref{eq:Xexpli}, \eqref{eq:phi'OU}, \eqref{eq:eV}, \eqref{item:ab}, the positivity of $a$ and $h$, and $\eta(0)>1$.
\end{proof}

\begin{remark}\label{rem:cmoSW}
Shioji and Watanabe \cite{ShioWata16} imposed the assumptions so that 
\begin{equation}\label{eq:auubu}
    \lim_{r\searrow0}a(r)U(r)^2
   =\lim_{r\searrow0}b(r)U(r)
   =0, \end{equation}
that is, each term in the expression \eqref{eq:Xexpli0} vanishes as $r\searrow0$, and they obtain Lemma~\ref{lem:Xrto0}. In our cases  \eqref{eq:fgh} with $N=2$, however, we have
\[
    a(r)U(r)^2\sim r^{2(p+1)/(p+3)+2-2\alpha},\quad
    |b(r)U(r)|\sim r^{2(p+1)/(p+3)-\alpha}\quad\text{as $r\searrow0$}. \]
These two functions vanish as $r\searrow0$ if and only if $\alpha<2-4/(p+3)$. Therefore, we cannot obtain \eqref{eq:auubu} in all of our cases under \eqref{eq:asmp}. 

On the other hand, in view of the expression \eqref{eq:Xexpli}, the assumption \eqref{item:ab} is enough to obtain Lemma~\ref{lem:Xrto0}, which is weaker than \eqref{eq:auubu} because $V$ does not contain the singularity of $g$. We can verify the assumption \eqref{item:ab} in all of our cases \eqref{eq:fgh} under \eqref{eq:asmp} (see Proof of Theorem~\ref{thm:unique} below).
\end{remark}

\begin{lemma}\label{lem:eta'}
If $\phi(0)<\psi(0)$, then $\eta'(r)<0$ for all $r>0$.
In particular, $\eta$ is bounded.
\end{lemma}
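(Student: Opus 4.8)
The plan is to prove Lemma~\ref{lem:eta'} by combining the integral representation of $\eta'$ from Lemma~\ref{lem:3} with the monotonicity information on $\eta$ encoded in the auxiliary function $X$. The key structural fact is \eqref{eq:eta'}: the sign of $\eta'(r)$ is governed by the sign of the integral
\[
    \int_0^r f(\tau)h(\tau)\bigl(\eta(\tau)^{p-1}-1\bigr)\phi(\tau)^p\psi(\tau)\,d\tau,
\]
which is negative precisely when $\eta>1$ on $(0,r)$ (using $p>1$, so that $\eta^{p-1}-1$ and $\eta-1$ have the same sign, together with the positivity of $f$, $h$, $\phi$, $\psi$). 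Since $\eta(0)=\psi(0)/\phi(0)>1$ by assumption, the natural strategy is a continuity/bootstrap argument: $\eta$ starts strictly above $1$, and as long as it stays above $1$ we get $\eta'<0$; the work is to rule out $\eta$ ever returning to the value $1$.

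First I would establish that $\eta(r)>1$ for all $r>0$, which immediately yields $\eta'(r)<0$ from \eqref{eq:eta'}. Suppose for contradiction that $\eta$ touches $1$, and let $r_0\ce\inf\{r>0\mid \eta(r)=1\}$. On $(0,r_0)$ we have $\eta>1$, hence by \eqref{eq:eta'} we have $\eta'<0$ there, so $\eta$ is strictly decreasing on $(0,r_0)$; but a function that is strictly decreasing on $(0,r_0)$ and satisfies $\eta(0^+)>1$ cannot attain the smaller value $1$ from above while simultaneously having $\eta(r_0)=1$ be the first such point—more precisely, strict decrease forces $\eta(r_0)<\eta(0^+)$, which is consistent, so the contradiction must instead come from the differential structure at $r_0$. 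The cleaner route is to invoke the monotonicity of $X$: by Lemma~\ref{lem:Jposi} we have $J(r;\phi)\ge0$ for all $r>0$, so from \eqref{eq:X'}, $X'(r)=2\eta(r)\eta'(r)J(r;\phi)$ has the same sign as $\eta'(r)$ whenever $J(r;\phi)>0$. Combined with the boundary behavior $\limsup_{r\searrow0}X(r)\le0$ from Lemma~\ref{lem:Xrto0}, this pins down the sign of $X$, and through the explicit expression \eqref{eq:Xexpli} one can read off the sign of $\eta'$.

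Concretely, I would argue as follows. Define $r_0\in(0,\infty]$ as the supremum of all $\rho$ such that $\eta>1$ on $(0,\rho)$; by continuity and $\eta(0^+)>1$, we have $r_0>0$. On $(0,r_0)$, \eqref{eq:eta'} gives $\eta'<0$ directly. If $r_0<\infty$, then $\eta(r_0)=1$ by continuity and minimality, while $\eta$ is strictly decreasing on $(0,r_0)$ with $\eta(0^+)>1$; I must derive a contradiction. At $r=r_0$ the integral in \eqref{eq:eta'} is strictly negative (the integrand is strictly negative on a set of positive measure because $\eta>1$ on $(0,r_0)$ and $h,\phi,\psi>0$), so $\eta'(r_0)<0$ as well, meaning $\eta$ continues strictly decreasing past $r_0$ and drops below $1$. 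This alone is not yet a contradiction; to close the argument I would feed the information $\eta<1$ on a right-neighborhood of $r_0$ back into the sign analysis of $X$ via \eqref{eq:X'} and \eqref{eq:Xexpli}, showing that the sign of $X$ forced by $\limsup_{r\searrow0}X(r)\le0$ and the monotonicity $X'=2\eta\eta'J(\cdot;\phi)$ becomes incompatible with $\eta$ crossing below $1$. Once $\eta>1$ globally is established, the statement $\eta'(r)<0$ for all $r>0$ follows from \eqref{eq:eta'}, and boundedness of $\eta$ is then immediate: $\eta$ is strictly decreasing and positive, hence $\eta(r)\le\eta(0^+)=\psi(0)/\phi(0)<\infty$.

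The main obstacle I anticipate is the logical closing of the crossing argument: showing rigorously that $\eta$ cannot cross from above $1$ to below $1$. The integral representation \eqref{eq:eta'} gives local sign information but does not by itself prevent $\eta$ from decreasing through the value $1$, so the real leverage must come from the global monotonicity of $X$ together with its boundary sign. The delicate point is coordinating three facts—the sign of $X$ near $0$ (Lemma~\ref{lem:Xrto0}), the sign-propagation identity $X'=2\eta\eta'J(\cdot;\phi)$, and the positivity $J(\cdot;\phi)\ge0$—to conclude that $X$ and hence $\eta-1$ cannot change sign. I expect the cleanest formulation treats $X$ and $\eta'$ together, using that $J(\cdot;\phi)\not\equiv0$ (also from Lemma~\ref{lem:Jposi}) to guarantee $X$ is \emph{strictly} monotone on any interval where $J(\cdot;\phi)>0$, which is exactly what forbids the return to $\eta=1$.
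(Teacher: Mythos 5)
There is a genuine gap: you assemble the right tools (Lemma~\ref{lem:Jposi}, Lemma~\ref{lem:Xrto0}, the identity \eqref{eq:X'}), but you never actually produce the contradiction --- you defer it twice (``the real leverage must come from\dots'', ``the main obstacle I anticipate\dots''), and the place where you look for it cannot yield one. At the first level-crossing $r_0$ (where $\eta(r_0)=1$) nothing is contradictory: as you yourself observe, $\eta'(r_0)<0$ there and $\eta$ simply continues below $1$ (minor sign slip aside: the integral in \eqref{eq:eta'} is strictly \emph{positive}, not negative, when $\eta>1$ on $(0,r_0)$; it is the prefactor $-1/(\phi^2 f)$ that makes $\eta'$ negative). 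Nor can ``strict monotonicity of $X$'' forbid the crossing: the inequality $X'\le 0$ is itself a consequence of $\eta'\le 0$ and $J\ge 0$, and it says nothing against $\eta$ passing through the value $1$ while still decreasing. Note also that the lemma does not assert $\eta>1$ everywhere --- only $\eta'<0$ --- so your stronger intermediate goal is both unnecessary and not what the argument delivers.

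The correct closing step (this is how \cite[Proposition~3]{ShioWata16}, which the paper cites as its proof of this lemma, proceeds) works at the first zero of $\eta'$, not at the first crossing of level $1$. Near $r=0$ one has $\eta>1$, hence $\eta'<0$ by \eqref{eq:eta'}. Suppose $\eta'$ vanishes somewhere, and let $r_1>0$ be its first zero, so that $\eta'<0$ on $(0,r_1)$ and $\eta'(r_1)=0$. If $\eta(r_1)\ge 1$, then strict monotonicity gives $\eta>1$ on $(0,r_1)$, so the integral in \eqref{eq:eta'} at $r=r_1$ is strictly positive and $\eta'(r_1)<0$, a contradiction; hence $\eta(r_1)<1$. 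Now invoke $X$: by Lemma~\ref{lem:Jposi} and \eqref{eq:X'} we have $X'\le 0$ on $(0,r_1)$, so Lemma~\ref{lem:Xrto0} yields $X(r_1)\le\limsup_{r\searrow 0}X(r)\le 0$. On the other hand, setting $\eta'(r_1)=0$ in \eqref{eq:Xexpli} annihilates the first two terms and leaves
\[
X(r_1)=-\frac{1}{p+1}\,a(r_1)h(r_1)\psi(r_1)^2\phi(r_1)^{p-1}\bigl(\eta(r_1)^{p-1}-1\bigr)>0,
\]
since $\eta(r_1)<1$ and $a,h,\phi,\psi>0$. This contradiction shows $\eta'<0$ on all of $(0,\infty)$, and boundedness follows as you say, since $\eta$ is then decreasing with $\eta(0^+)=\psi(0)/\phi(0)<\infty$. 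The decisive idea you were missing is precisely this evaluation of $X$ at a critical point of $\eta$, where the sign of $X$ is controlled by $\eta-1$ alone.
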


\begin{proof}
See \cite[Proposition~3]{ShioWata16}
\end{proof}

\begin{proof}[Proof of Theorem~\ref{thm:guniqueness}]
Suppose that $\phi(0)<\psi(0)$.
From the assumption $J(r;\phi)\to0$ and $J(r;\psi)\to0$ as $r\to\infty$ and Lemma~\ref{lem:eta'} we have $X(r)\to0$ as $r\to\infty$.

On the other hand, by the expression~\eqref{eq:X'} and Lemmas~\ref{lem:Jposi} and \ref{lem:eta'} we have $X'\le0$ and $X\not\equiv0$.
Therefore, from Lemma~\ref{lem:Xrto0} we obtain $\limsup_{r\to\infty}X(r)<0$.
This is a contradiction.
Therefore we have $\phi(0)=\psi(0)$.

Hence, by Lemma~\ref{lem:Gron} we obtain $\phi=\psi$.
This completes the proof.
\end{proof}

\subsection{Proof of Theorem~\ref{thm:unique}}\label{subseq:uni}

In this subsection, we prove Theorems~\ref{thm:unique} by using Theorems~\ref{thm:guniqueness} and \ref{thm:guniquess1d}.

In the case \eqref{eq:fgh},
the functions $a$, $b$, $c$, and $G$ are written as follows:
\begin{align}
    a(r) \notag
  &=r^q, \\
    b(r) \notag
  &=\frac{2(N-1)}{p+3}r^{q-1}, \\
    c(r) \notag
  &=\frac{2(N-1)\bigl(N+2-(N-2)p\bigr)}{(p+3)^2}r^{q-2},
\\ G(r) \label{eq:expG3}
  &=\frac{1}{(p+3)^3}r^{q-3}(A
+Br^{2-\alpha}
+Cr^2),
\end{align}
where
\begin{align*}
    q
  &\ce\frac{2(p+1)(N-1)}{p+3},
\\  A
  &\ce4(N-1)\bigl(N-4+(N-2)p\bigr)\bigl(N+2-(N-2)p\bigr),
\\  B
  &\ce\gamma(p+3)^2\bigl(2(N-1)(p-1)-(p+3)\alpha\bigr),
\\  C
  &\ce-2\omega(N-1)(p-1)(p+3)^2. \end{align*}
The functions $U$ and $V$ satisfy
\[
    |U(r)|\lesssim r^{1-\alpha},\quad
    V(r)\lesssim r \] 
for $0<r\ll 1$. In the case $N=2$, the functions $D$ and $G$ are explicitly written as
\begin{align}\label{eq:expD}
    D(r)
  &=\frac{r^{2-8/(p+3)}}{(p+3)^2}\bigl(-4+\omega(p+3)^2r^2
    -\gamma(p+3)^2r^{2-\alpha}\bigr),
\\  G(r) \label{eq:expG}
  &=\frac{r^{-(p+7)(p+3)}}{2(p+3)^3}\Bigl(
    \begin{aligned}[t]
    -32
    -\gamma(p+3)^2\bigl(3\alpha+2-(2-\alpha)p\bigr)r^{2-\alpha}\quad
\\  -2\omega(p-1)(p+3)^2r^2\Bigr). \end{aligned}
\end{align}

\begin{proof}[Proof of Theorem~\ref{thm:unique}]
Let $\phi$ be a solution of \eqref{eq:ode} satisfying $\phi(r)\to0$ as $r\to\infty$. Then by using the argument in \cite[Section~4.2]{BereLion83I} we see that $\phi$ and $\phi'$ decay exponentially at the infinity. Therefore, we have $J(r;\phi)\to0$ as $r\to\infty$. After that, it suffices to verify \eqref{item:fgh}--\eqref{item:N3}. 

\eqref{item:fgh}: obvious.

\eqref{item:R}: \eqref{item:Ri} and \eqref{item:Rii} can be easily verified by a direct computation. \eqref{item:Riii} holds if $N\ge2$ since $f(r)=r^{N-1}$.

\eqref{item:ab}: Since $q\ge0$ and $0<\alpha<\min\{N,2\}$, we have $a(r)U(r)V(r)\lesssim r^{q+2-\alpha}\to0$ as $r\searrow0$. When $N\ge2$, since $q>0$, we have $b(r)V(r)\lesssim r^{q}\to0$ $r\searrow0$. When $N=1$, since $b(r)\equiv0$, we have $b(r)V(r)\equiv0$. Therefore, \eqref{item:ab} holds.

\eqref{item:N2}: 
In the case $N\ge3$, since $q>2$, we have
\[
    b(r)U(r)\sim r^{q-\alpha}\to 0,\quad
    c(r)-a(r)g(r)\sim r^{q-2}\to 0\quad\text{as $r\searrow0$}. \]
Therefore, \eqref{item:bcag} holds.
In the case $N=2$, from the expression \eqref{eq:expD}, we can verify \eqref{item:Da}.
In the case $N=1$, since $a\equiv1$ and $b\equiv c\equiv 0$, we have
\[
    b(r)U(r)\equiv0,\quad
    c(r)-a(r)g(r)
    =-g(r)
    =\frac{\gamma}{r^\alpha}-\omega>0\quad\text{for $0<r\ll 1$}.  \]
Therefore, \eqref{item:bcag} holds.

\eqref{item:N3}: 
Since $G^-\not\equiv0$ is obvious, we only verify that \eqref{item:G1} or \eqref{item:G2ii} hold.
In the case $N\ge3$, since $A>0$ and $C<0$, it follows from the expression~\eqref{eq:expG3} that \eqref{item:G1} holds.
In the case $N=2$, we see that \eqref{item:G2ii} holds.
Indeed, let $r_0>0$ satisfy $G(r_0)=0$, that is,
\[
    \omega(p+3)^2r_0^2
   =\frac{1}{2(p-1)}\Bigl(-32
    -\gamma(p+3)^2\bigl(3\alpha+2-(2-\alpha)p\bigr)r_0^{2-\alpha}\Bigr). \]
Then we have
\begin{align*}
D(r_0)
&=\frac{r_0^{2-8/(p+3)}}{2(p-1)(p+3)^2}
\left(-8(p-1)-32-\gamma\alpha(p+3)^3r_0^{2-\alpha}\right)
<0.
\end{align*}

In the case $N=1$, \eqref{item:G1} holds. Indeed, since $a\equiv1$ and $b\equiv c\equiv 0$, we have 
\[
    G(r)=-\dfrac{g'(r)}{2}=-\frac{\gamma\alpha}{2r^{\alpha+1}}
   <0 \]
for all $r>0$.

Hence, Theorems~\ref{thm:guniqueness} and \ref{thm:guniquess1d} imply the conclusion.
\end{proof}

\section{Nondegeneracy of the ground state}\label{sec:ndgs}

Throughout this section, we only consider real-valued functions.

\subsection{Sufficient condition for nondegeneracy}\label{subsec:4.1}

To give sufficient conditions for nondegeneracy of the unique positive radial solution for \eqref{eq:gsp}, we prepare some notations. We define
\begin{align*}
\|v\|_{\mc{X}_\rho}
&\ce\left(\int_{\mb{R}^N}\left(|\nabla v(x)|^2+g(|x|)|v(x)|^2\right)\rho(|x|)\,dx\right)^{1/2}, \\
\|v\|_{\mc{L}_\rho}
&\ce\left(\int_{\mb{R}^N}h(|x|)|v(x)|^{p+1}\rho(|x|)\,dx\right)^{1/(p+1)}.
\end{align*}
We denote the completion of $C_\mr{c}^\infty(\mb{R}^N)$ with respect to $\|\cdot\|_{\mc{X}_\rho}$ and $\|\cdot\|_{\mc{L}_\rho}$ by $\mc{X}_\rho$ and $\mc{L}_\rho$, respectively.
We set 
\begin{equation}\label{eq:choif}
    f(r)\ce |S^{N-1}|r^{N-1}\rho(r), \end{equation} 
where $S^{N-1}$ is the $(N-1)$-dimensional unit sphere, and $|S^{N-1}|$ is the surface measure of $S^{N-1}$. Here since the equation \eqref{eq:gsp} does not depend on the choice of the constant factor of $f$, we choose \eqref{eq:choif} for simplicity of notation. We define 
\begin{align*}
\mc{X}
&\ce\mc{X}_{\rho,\mr{rad}},\quad
\mc{L}
\ce\mc{L}_{\rho,\mr{rad}}, \\
\|\psi\|_\mc{X}
&\ce\|\psi\|_{\mc{X}_\rho}
=\left(\int_0^\infty\left(\psi'(r)^2+g(r)\psi(r)^2\right)f(r)\,dr\right)^{1/2}, \\
\|\psi\|_\mc{L}
&\ce\|\psi\|_{\mc{L}_\rho}
=\left(\int_0^\infty h(r)|\psi(r)|^{p+1}f(r)\,dr\right)^{1/(p+1)}.
\end{align*}

We define the $C^2$-functional $I_\rho\colon\mc{X}_\rho\to\mb{R}$ by
\[
    I_\rho(v)
   \ce\frac12\|v\|_{\mc{X}_\rho}^2
    -\frac{1}{p+1}\|v\|_{\mc{L}_\rho}^{p+1}. \]
Note that $I_\rho'(\phi)=0$ is rewritten as \eqref{eq:gsp}, and $I_\rho''(\phi)w=0$ is rewritten as
\begin{equation}\label{eq:weq}
\Delta w
+\frac{\nabla\rho\cdot\nabla w}{\rho}
-gw
+ph\phi^{p-1}w
=0,\quad
x\in\mb{R}^N.
\end{equation}
We also define the $C^2$-functional $I\colon\mc{X}\to\mb{R}$ by 
\[
I(\psi)
\ce I_\rho(\psi)
=\frac12\|\psi\|_{\mc{X}}^2
-\frac{1}{p+1}\|\psi\|_{\mc{L}}^{p+1}.
\]
We also note that $I'(\phi)=0$ is rewritten as the equation in \eqref{eq:gode}, and $I''(\phi)\psi=0$ is rewritten as
\[
\psi''
+\frac{f'(r)}{f(r)}\psi'
-g(r)\psi
+ph(r)\phi(r)^{p-1}\psi
=0,\quad
r>0.
\]

First, we consider the nondegeneracy of the positive radial solution for \eqref{eq:gsp} in the radial function space $\mc{X}$.
We impose the following conditions:
\begin{enumerate}[(I)]
\setcounter{enumi}{\value{enumimemory}}
\item\label{item:normeq1}
The following relations of norms hold:
\[
    \inf_{\psi\in\mc{X}\setminus\{0\}}\frac{\|\psi\|_{\mc{X}}}{\|\psi\|_{\mc{L}}}>0,\quad
    \inf_{\psi\in\mc{X}\setminus\{0\}}\frac{\|\psi\|_{\mc{X}}}
    {\Bigl(\int_0^\infty\bigl(\psi'(r)^2+|g(r)|\psi(r)^2\bigr)f(r)\,dr\Bigr)^{1/2}}>0.
\]
\item\label{item:comorwcon}
One of the following is satisfied.
\begin{enumerate}
\item\label{item:com}
The embedding $\mc{X}\hookrightarrow\mc{L}$ is compact.
\item\label{item:wcon}
There exists $\hat g\in C(0,\infty)$ such that
\[
\inf_{\psi\in\mc{X}\setminus\{0\}}\frac{\|\psi\|_{\mc{X}}^2}{\|\psi\|_{\mc{L}}^2}
<\inf_{\psi\in\mc{X}\setminus\{0\}}\frac{\int_0^\infty\bigl(\psi'(r)^2+\hat g(r)\psi(r)^2\bigr)f(r)\,dr}{\|\psi\|_{\mc{L}}^2},
\]
and if $\psi_j\wto \psi$ weakly in $\mc{X}$, then
\[
\lim_{j\to\infty}\int_0^\infty\bigl(\hat g(r)-g(r)\bigr)|\psi_j-\psi|^2f(r)\,dr
=0.
\]
\end{enumerate}
\item\label{item:subsol}
If $\tilde{\phi}\in\mc{X}$ satisfies the equation \eqref{eq:gsp} on $\{|x|<\delta\}$ for some $\delta>0$, then $\tilde{\phi}$ is continuous at the origin.
\item\label{item:R_phi}
If $\tilde\phi\in\mc{X}\cap C^2(\mb{R}^N\setminus\{0\})\cap C(\mb{R}^N)$ is positive and satisfies the equation \eqref{eq:gsp} on $\{|x|>R\}$ for some $R>0$, then $J(r;\tilde\phi)\to0$ as $r\to\infty$. 
\setcounter{enumimemory}{\value{enumi}}
\end{enumerate}
The following nondegeneracy result holds, which is useful to show the nondegeneracy in the cases of $N\ge2$.

\begin{theorem}\label{thm:nondode}
Let $p>1$.
Assume \eqref{item:fgh}--\eqref{item:R_phi}.
Then the unique positive radial solution $\phi\in\mc{X}$ of \eqref{eq:gsp} is a nondegenerate critical point of the functional $I$, that is, 
\[
I'(\phi)=0,\quad
\ker I''(\phi)
=\{\psi\in\mc{X}\mid I''(\phi)\psi=0\}
=\{0\}.
\]
\end{theorem}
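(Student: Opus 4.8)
The plan is to argue by contradiction. Suppose there is a nonzero radial $\psi\in\mc{X}$ with $I''(\phi)\psi=0$, that is,
\[
    \psi''+\frac{f'(r)}{f(r)}\psi'-g(r)\psi+ph(r)\phi(r)^{p-1}\psi=0,
    \quad r>0,
\]
and aim to conclude $\psi\equiv0$. First I would fix the functional framework and the boundary behaviour. Assumptions \eqref{item:normeq1}--\eqref{item:comorwcon} guarantee that $I$ is a well-defined $C^2$-functional on $\mc{X}$ and identify $\phi$ with the (unique, by Theorem~\ref{thm:guniqueness}) positive radial ground state, so that $\phi>0$ everywhere. Assumption~\eqref{item:subsol} together with the near-origin estimates of Section~\ref{sec:ugs} gives that $\psi$ is continuous at the origin, is $C^2$ on $(0,\infty)$, and satisfies $f(r)\psi'(r)\to0$ as $r\searrow0$ (the analogue of Lemma~\ref{lem:phi'} for the linearised equation); assumption~\eqref{item:R_phi}, combined with the exponential decay of $\phi$, yields the decay of $\psi$ and $f\psi'$ at infinity. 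These are exactly the facts needed so that, writing $\zeta\ce\psi/\phi$, the weighted derivative $f\phi^2\zeta'=f(\psi'\phi-\psi\phi')$ vanishes at both ends.

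The clean core of the argument is a Wronskian computation. Since $\phi$ solves the nonlinear ODE and $\psi$ the linearised one, a direct calculation turns the problem into the self-adjoint form
\[
    \bigl(f(r)\phi(r)^2\zeta'(r)\bigr)'
    =-(p-1)f(r)h(r)\phi(r)^{p+1}\zeta(r),
\]
which is legitimate because $\phi>0$. Integrating this identity over $(0,\infty)$, the boundary terms drop out by the previous paragraph, giving $\int_0^\infty f h\phi^{p+1}\zeta\,dr=0$. As $fh\phi^{p+1}>0$, either $\zeta\equiv0$ (hence $\psi\equiv0$, and we are done) or $\zeta$ changes sign on $(0,\infty)$. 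So I may assume $\zeta$ has at least one interior zero, and, after replacing $\psi$ by $-\psi$ if necessary, that $\zeta(0)>0$.

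The heart of the proof is then to contradict the existence of such a sign-changing $\zeta$ by a Poho\v{z}aev argument parallel to the uniqueness proof. The plan is to introduce the linearisation of $X$ from \eqref{eq:X(r)}: a functional $Y$ built from $\zeta$, from $J(\cdot;\phi)$, and from the directional derivative of $r\mapsto J(r;\psi)$ along $\psi$, which by \eqref{eq:gPi} and \eqref{eq:X'} satisfies an identity of the form $Y'(r)=2\zeta'(r)J(r;\phi)$. Because $J(\cdot;\phi)\ge0$ by Lemma~\ref{lem:Jposi}, the sign of $Y'$ is dictated by $\zeta'$, and the single-sign-change structure of $G$ imposed in \eqref{item:N3} — the very mechanism driving uniqueness (the linearised version of Lemma~\ref{lem:eta'}) — forces $\zeta'$ to keep a fixed sign. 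Together with $Y(r)\to0$ as $r\to\infty$ (from \eqref{item:R_phi}) and $\limsup_{r\searrow0}Y(r)\le0$ (the linearisation of Lemma~\ref{lem:Xrto0}, using $\zeta(0)>0$), the monotonicity forces $Y\equiv0$, whence $\zeta'\equiv0$ on the set where $J(\cdot;\phi)>0$. Feeding $\zeta'=0$ back into the displayed second-order equation gives $\zeta\equiv0$ there, and ODE uniqueness propagates this to $\zeta\equiv0$ on all of $(0,\infty)$, contradicting the sign change. Hence $\psi\equiv0$. (Note that $\psi=\mr{const}\cdot\phi$ cannot occur either, since applying the linearised equation to $\phi$ gives $(p-1)h\phi^{p}\not\equiv0$, so $\phi\notin\ker I''(\phi)$.)

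The hard part will be the analysis at the origin. Exactly as flagged in Remark~\ref{rem:cmoSW} for the uniqueness proof, the coefficients $b,c$ and the potential $g\sim\gamma r^{-\alpha}$ are singular at $r=0$, so the naive estimates $\lim_{r\searrow0}a(r)\phi'(r)^2=0$ underlying the hypotheses \eqref{eq:nonasmp1}--\eqref{eq:nonasmp2'} of \cite{ShioWata16} fail in our range of $\alpha$, particularly in the two-dimensional case. The refinement is to rewrite $Y$ and its constituent terms — just as $U$ is replaced by $V$ in Remark~\ref{rem:cmoSW} — so that the singular part of $g$ cancels algebraically before the limit is taken, allowing the origin boundary estimate $\limsup_{r\searrow0}Y(r)\le0$ to be verified under the weaker hypotheses \eqref{item:ab} and \eqref{item:subsol}--\eqref{item:R_phi} rather than the strong conditions of \cite{ShioWata16}. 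Carrying out this cancellation and controlling the resulting limit is where the bulk of the technical work lies.
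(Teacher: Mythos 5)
Your skeleton — boundary analysis at $0$ and $\infty$, the Wronskian identity $(f\phi^2\zeta')'=-(p-1)fh\phi^{p+1}\zeta$, and a monotone quantity $Y$ with $Y'=2\zeta'J(\cdot\,;\phi)$ linearizing $X$ — is the right one, and your origin analysis (arranging the terms so that only $a(r)U(r)V(r)$, $b(r)V(r)$ and a sign-definite term appear, so that \eqref{item:ab} suffices) matches the refinement this paper makes to the Shioji--Watanabe scheme. But there is a genuine gap at the decisive step: the claim that ``the single-sign-change structure of $G$ \dots forces $\zeta'$ to keep a fixed sign.'' The hypothesis \eqref{item:N3} on $G$ enters only through Lemma~\ref{lem:Jposi} (it yields $J(\cdot\,;\phi)\ge0$); it exerts no control on $\zeta'$. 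Nor is there a valid ``linearised version of Lemma~\ref{lem:eta'}'': the proof of that lemma rests on the positivity of \emph{both} solutions, since the integrand in \eqref{eq:eta'} carries the factor $\eta^{p-1}-1$ whose sign is slaved to that of $\eta-1$, and your kernel element is sign-changing — indeed your own step 2 proves it must change sign. What the Wronskian representation
\[
\zeta'(r)=-\frac{p-1}{f(r)\phi(r)^2}\int_0^r f(\tau)h(\tau)\phi(\tau)^{p+1}\zeta(\tau)\,d\tau
\]
actually gives is a dichotomy: if $\zeta$ changes sign \emph{exactly once}, say from $+$ to $-$, then the integral increases, then decreases to its limit $0$, hence is positive for every finite $r$, so $\zeta'<0$ everywhere and your $Y$-argument closes the proof; but if $\zeta$ changes sign twice or more, the integral itself changes sign, so does $\zeta'$, and then $Y$ first decreases and then increases, in which case the three facts $Y(\infty)=0$, $\limsup_{r\searrow0}Y\le0$, $J\ge0$ are perfectly compatible (e.g.\ with $Y\le0$ throughout) and produce no contradiction. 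Everything therefore hinges on showing that a kernel element has exactly one interior zero, and your proposal never establishes the ``at most one'' half.

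That missing half is exactly where the variational hypotheses \eqref{item:normeq1}--\eqref{item:comorwcon} — which you invoke only to say $I$ is well defined — must do real work: they make the Nehari minimization well posed and attained, so that by uniqueness $\phi$ is the Nehari minimizer and $I''(\phi)$ has Morse index one; a nodal-domain argument (truncate $\psi$ at its zeros and use strict monotonicity of the lowest Dirichlet eigenvalue under domain enlargement) then shows a zero-mode can have at most two nodal domains. Combined with your ``at least one sign change'' this gives exactly one, after which your steps go through. This Morse-index/nodal-count ingredient is the core of the proof of \cite[Theorem~3]{ShioWata16}, which is the proof this paper adopts verbatim (it explicitly omits the argument and cites that theorem), so its absence is not a stylistic difference but a hole. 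Two smaller points: \eqref{item:subsol} and \eqref{item:R_phi} are hypotheses about solutions of the \emph{nonlinear} equation, so applying them to $\psi$ (continuity at $0$, $f\psi'\to0$, decay of the boundary terms at infinity) needs a separate, if routine, elliptic/ODE argument; and your normalization $\zeta(0)>0$ tacitly uses $\psi(0)\ne0$, which requires the Gronwall-type uniqueness argument of Lemma~\ref{lem:Gron} adapted to the linearized equation.
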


In the case $N=1$, since \eqref{item:Riii} does not hold, we need to modify the assumptions.
We impose the following instead of \eqref{item:subsol}:

\begin{enumerate}
\renewcommand{\theenumi}{\Roman{enumi}'}
\renewcommand{\labelenumi}{(\theenumi)}
\setcounter{enumi}{7}
\item\label{item:subsol1d}
If $\tilde{\phi}\in\mc{X}$ satisfies the equation in \eqref{eq:gsp} on $\{|x|<\delta\}$ for some $\delta>0$, then $\tilde{\phi}$ is continuous at the origin and  satisfies $f(r)\tilde{\phi}'(r)\to0$ as $r\searrow0$.
\end{enumerate}
The following nondegeneracy result holds, which is useful to show the nondegeneracy in the cases of $N=1$.

\begin{theorem}\label{thm:nondode1}
Let $p>1$.
Assume \eqref{item:fgh}--\eqref{item:comorwcon}, \eqref{item:subsol1d}, and \eqref{item:R_phi} except for \eqref{item:Riii}.
Then the unique positive radial solution $\phi\in\mc{X}$ of \eqref{eq:gsp} is a nondegenerate critical point of the functional $I$.
\end{theorem}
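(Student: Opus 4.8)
The plan is to follow the proof of Theorem~\ref{thm:nondode} essentially line by line, since Theorem~\ref{thm:nondode1} differs from it only in that hypothesis \eqref{item:Riii} is dropped while \eqref{item:subsol} is strengthened to \eqref{item:subsol1d}. This is precisely the modification that passes from the uniqueness Theorem~\ref{thm:guniqueness} to its one-dimensional counterpart Theorem~\ref{thm:guniquess1d}. First I would set up the variational framework: using the norm relations \eqref{item:normeq1} together with the compactness-or-weak-continuity alternative \eqref{item:comorwcon}, the positive radial solution $\phi\in\mc{X}$ is realized as a minimizer of the ratio $\|\psi\|_{\mc{X}}^2/\|\psi\|_{\mc{L}}^2$ over $\mc{X}\setminus\{0\}$, so that $I''(\phi)$ has Morse index one on $\mc{X}$ and the whole problem reduces to showing that the radial linearized equation \eqref{eq:weq}, that is $\psi''+(f'/f)\psi'-g\psi+ph\phi^{p-1}\psi=0$, admits no nontrivial solution $\psi\in\mc{X}$.

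Next I would analyze a candidate kernel element $\psi\in\ker I''(\phi)$ at the two ends. At infinity the decay condition \eqref{item:R_phi} guarantees $J(r;\phi)\to0$ as $r\to\infty$, which is exactly what is needed to run the generalized Poho\v{z}aev machinery of Proposition~\ref{prop:Pohozaev}, while $\psi\in\mc{X}$ supplies the corresponding decay of $\psi$. At the origin I would argue that $\psi$ is regular, in the sense that $\psi(0)$ is finite and, crucially, $f(r)\psi'(r)\to0$ as $r\searrow0$. Then, reproducing the Shioji--Watanabe comparison argument---tracking a Wronskian-type quantity built from $\phi$ and $\psi$, and exploiting the monotonicity \eqref{eq:gPi} of $J$ together with the sign conditions \eqref{item:N3}---forces $\psi$ to be a fixed multiple of a distinguished non-decaying solution of the linearized ODE, which is incompatible with $\psi\in\mc{X}$ unless $\psi\equiv0$.

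The one place where the argument genuinely diverges from Theorem~\ref{thm:nondode} is the origin analysis, and this is the main obstacle. When \eqref{item:Riii} holds one has $1/f\notin L^1(0,R)$, and the integral representation of Lemma~\ref{lem:phi'} then yields $f\phi'\to0$ (and likewise $f\psi'\to0$ for the kernel element) for free; this is exactly the conclusion used to eliminate the boundary terms at $r=0$ in the Poho\v{z}aev identities. In the present one-dimensional setting $1/f\in L^1(0,R)$, so this automatic vanishing is unavailable, and instead I would import the boundary behavior directly from \eqref{item:subsol1d}, which is tailored to supply $f(r)\tilde\phi'(r)\to0$ as $r\searrow0$. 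I would then check that the origin contributions in the relevant identities---the analogues of $J$, $X$, and $D$ from \eqref{eq:Jr}, \eqref{eq:X(r)}, and \eqref{eq:D}---still vanish under the weaker hypotheses \eqref{item:ab} and \eqref{item:N2}, exactly as in Lemmas~\ref{lem:Jrto0} and \ref{lem:Xrto0}. Since these weaker conditions were already designed to accommodate the singular origin behavior of $g$ and of $\phi'$, once the boundary vanishing is secured from \eqref{item:subsol1d} the remainder of the proof of Theorem~\ref{thm:nondode} carries over verbatim.
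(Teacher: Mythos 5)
Your proposal takes essentially the same route as the paper: the paper gives no separate argument for Theorem~\ref{thm:nondode1}, stating only that its proof (like that of Theorem~\ref{thm:nondode}) is exactly the same as that of \cite[Theorem~3]{ShioWata16}, with the hypotheses arranged so that \eqref{item:subsol1d} supplies the vanishing of $f\tilde\phi'$ at the origin which \eqref{item:Riii} would otherwise provide --- precisely the modification you describe, mirroring the passage from Theorem~\ref{thm:guniqueness} to Theorem~\ref{thm:guniquess1d}. Your sketch is correct in identifying this origin analysis as the only point of divergence from the proof of Theorem~\ref{thm:nondode}, so it is consistent with the paper's (omitted) proof.
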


\begin{remark}
A sufficient condition \cite[(3.3)]{ShioWata16} for \eqref{item:subsol} is assumed in \cite[(B8)]{ShioWata16}, but it is difficult to verify this condition in our case \eqref{eq:fgh}. Instead, we can verify \eqref{item:subsol} by an elliptic regularity argument as in the proof of Proposition~\ref{prop:re}.
\end{remark}

The proof of Theorems~\ref{thm:nondode} and \ref{thm:nondode1} are exactly the same as that for \cite[Theorem~3]{ShioWata16}, so we omit it.

Next, we consider nondegeneracy of the unique positive radial solution of \eqref{eq:gsp} in the whole space $\mc{X}_\rho$.
We impose the following conditions:
\begin{enumerate}[(I)]
\setcounter{enumi}{\value{enumimemory}}
\item\label{item:Xrho}
The following relations of norms hold:
\[
    \inf_{v\in\mc{X}_\rho\setminus\{0\}}\frac{\|v\|_{\mc{X}_\rho}}{\|v\|_{\mc{L}_\rho}}>0,\quad
    \inf_{v\in\mc{X}_\rho\setminus\{0\}}\frac{\|v\|_{\mc{X}_\rho}}{\Bigl(\int_{\mb{R}^N}\bigl(|\nabla v(x)|^2+|g(|x|)||v(x)|^2\bigr)\rho(|x|)\,dx\Bigr)^{1/2}}>0.
\]
\item\label{item:f'g'h'}
$f'\ge0$ in $(0,\infty)$,
\begin{equation}\label{eq:3.5}
(\log\rho)''\ge0,\quad
g'\ge0,\quad
h'\le0\quad
\text{in $(0,\infty)$},
\end{equation}
and at least one inequality in \eqref{eq:3.5} is not identically equal.
\item\label{item:Aw}
If $w\in\mc{X}_\rho$ is a solution of \eqref{eq:weq}, then $w\in C(\mb{R}^N)\cap C^1(\mb{R}^N\setminus\{0\})$ and
$|w(x)|+|\nabla w(x)|\to0$ as $|x|\to\infty$.
\item\label{item:B12infty}
$\limsup_{r\to\infty}\bigl(f'(r)|\phi'(r)|+f(r)|g(r)|\phi(r)+f(r)h(r)\phi^p+f(r)|\phi'(r)|\bigr)<\infty$.
\item\label{item:gphpnega}
$-g\phi+h\phi^p>0$ on $(0,\delta)$ for some $\delta>0$.
\item\label{item:B12}
One of the following holds.
\begin{enumerate}
\item\label{item:B12posi}
$\liminf_{r\searrow0}\bigl(-f'(r)\phi'(r)+f(r)g(r)\phi(r)
-f(r)h(r)\phi(r)^p\bigr)
>-\infty$.
\item\label{item:B12wC1}
$f(r)|g(r)|+f(r)h(r)=o(r^{-1})$ as $r\searrow0$ and $w\in C^1(\mb{R}^N)$, where $w$ is the function given in \eqref{item:Aw}.
\end{enumerate}
\end{enumerate}

The following is our nondegeneracy result.

\begin{theorem}\label{thm:gnondege}
Let $p>1$ and $N\ge2$.
Assume \eqref{item:fgh}--\eqref{item:f'g'h'}.
Let $\phi\in\mc{X}_\rho$ be the unique positive radial solution  of \eqref{eq:gsp} satisfying \eqref{item:Aw}--\eqref{item:B12}.
Then $\phi$ is a nondegenerate critical point of $I_\rho$, that is, 
\[
I_\rho'(\phi)=0,\quad
\ker I_\rho''(\phi)=
\{w\in\mc{X}_\rho\mid I_\rho''(\phi)w=0\}
=\{0\}.
\]
\end{theorem}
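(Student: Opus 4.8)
The plan is to reduce the nondegeneracy statement in the full space $\mc{X}_\rho$ to the already-established radial nondegeneracy (Theorem~\ref{thm:nondode}) by decomposing any kernel element into spherical harmonics and ruling out each angular mode separately. Concretely, suppose $w\in\mc{X}_\rho$ satisfies $I_\rho''(\phi)w=0$, i.e.\ \eqref{eq:weq}. By assumption \eqref{item:Aw} we know $w\in C(\R^N)\cap C^1(\R^N\setminus\{0\})$ with $|w|+|\nabla w|\to0$ at infinity, so the spherical-harmonic expansion $w(x)=\sum_{\ell\ge0}\sum_m w_{\ell}(r)Y_{\ell}^m(\omega)$ is justified and each coefficient $w_\ell$ lies in the radial space and solves the associated ODE
\[
    w_\ell''
    +\frac{f'(r)}{f(r)}w_\ell'
    -g(r)w_\ell
    +ph(r)\phi^{p-1}w_\ell
    -\frac{\lambda_\ell}{r^2}w_\ell
   =0,
\]
where $\lambda_\ell=\ell(\ell+N-2)$ is the eigenvalue of the Laplace--Beltrami operator on $S^{N-1}$. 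First I would treat the three regimes $\ell=0$, $\ell=1$, and $\ell\ge2$ in turn.

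For the mode $\ell=0$, the coefficient $w_0$ is a radial kernel element, so $w_0\in\ker I''(\phi)$ in the sense of the radial functional $I$; since the hypotheses \eqref{item:fgh}--\eqref{item:R_phi} required by Theorem~\ref{thm:nondode} hold (one must check that the present assumptions \eqref{item:Xrho}--\eqref{item:B12} together with the standing \eqref{item:fgh}--\eqref{item:N3} imply them, which is where the regularity hypotheses \eqref{item:Aw} and \eqref{item:subsol}-type control at the origin enter), Theorem~\ref{thm:nondode} forces $w_0\equiv0$. For the modes $\ell\ge2$, the extra repulsive term $-\lambda_\ell r^{-2}w_\ell$ makes the associated quadratic form strictly more positive than the $\ell=1$ form, so it suffices to kill $\ell=1$ and then compare. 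The key structural input is the monotonicity package \eqref{item:f'g'h'}: differentiating the radial equation \eqref{eq:gode} for $\phi$ shows that $\phi'$ solves the $\ell=1$ linearized equation, i.e.\ $\phi'$ is (up to sign) the natural candidate kernel element for the first mode, and the standard argument is that $\phi'<0$ has no interior zero, so by a Sturm comparison $w_1$ cannot oscillate and, combined with the decay at infinity from \eqref{item:Aw} and the behavior at the origin from \eqref{item:B12}, must vanish.

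The heart of the matter, and what I expect to be the main obstacle, is controlling the boundary terms at the origin when integrating the Wronskian-type identity between $w_\ell$ and the reference solution $\phi'$. Because $g$ has an inverse-power singularity, $\phi'$ blows up like $r^{1-\alpha}$ (as noted in the Remark after Proposition~\ref{prop:re}), so the quantity $f(r)\bigl(w_\ell'\phi'-w_\ell\phi''\bigr)$ need not obviously vanish as $r\searrow0$. This is exactly why assumptions \eqref{item:B12infty}--\eqref{item:B12} are imposed: \eqref{item:gphpnega} fixes the sign of $-g\phi+h\phi^p$ near $0$ so that $\phi'$ has a definite sign there, while the dichotomy \eqref{item:B12posi}/\eqref{item:B12wC1} supplies precisely the one-sided bound or the $C^1$ regularity needed to show the boundary contribution at the origin has the correct sign (or vanishes). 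I would therefore carry out the integration by parts on $(\varepsilon,\infty)$, use \eqref{item:B12infty} to discard the term at infinity, use \eqref{item:B12} to handle the term at $\varepsilon$ as $\varepsilon\searrow0$, and read off $w_\ell\equiv0$. Assembling the three cases gives $w\equiv0$, completing the proof; the analogous $N=1$ statement would instead invoke Theorem~\ref{thm:nondode1} for the radial mode and is degenerate in the angular variable.
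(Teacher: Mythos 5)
Your skeleton matches the paper's: expand $w$ in spherical harmonics, kill the radial mode with Theorem~\ref{thm:nondode}, and treat the higher modes through an identity pairing $w_j$ with $\phi'$, with boundary terms at infinity and at the origin controlled by \eqref{item:B12infty} and \eqref{item:B12}. But the pivotal step for the modes $j\ge1$ is misstated, and as written it fails. Differentiating \eqref{eq:gode} does \emph{not} show that $\phi'$ solves the $\ell=1$ linearized equation: it gives \eqref{eq:delode}, which differs from the mode equation \eqref{eq:w_j} by the inhomogeneous term $-g'\phi+h'\phi^{p}$ and by the coefficient discrepancy $(f'/f)'+(N-1)/r^2=(\log\rho)''$. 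These are precisely the quantities that \eqref{item:f'g'h'} requires to be sign-definite and \emph{not identically zero}. Indeed, in the only situation where $\phi'$ does solve the $\ell=1$ equation (the translation-invariant case $\rho,g,h$ constant, excluded by \eqref{item:f'g'h'}), the conclusion is false: the translation modes $\partial_i\phi$ are nontrivial kernel elements. So your Sturm comparison between $w_1$ and $\phi'$ has no basis — they do not solve comparable homogeneous equations — and your reduction of $\ell\ge2$ to $\ell=1$ presupposes nonnegativity of the $\ell=1$ quadratic form, which is not established (and would normally be derived from the very fact that $\phi'$ is a zero-energy solution, unavailable here).

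The paper's actual mechanism is the opposite of the one you describe: the monotonicity package does not make $\phi'$ a kernel candidate, it makes the Wronskian-type quantity $\xi_j=f(\phi''w_j-\phi'w_j')$ strictly monotone across any nodal interval of $w_j$. Multiplying \eqref{eq:w_j} by $-f\phi'$, \eqref{eq:delode} by $fw_j$, adding and integrating yields \eqref{eq:fphiw},
\[
\xi_j(\tau)-\xi_j(\sigma)
=-\int_\sigma^\tau\Bigl(\bigl(\tfrac{f'}{f}\bigr)'+\tfrac{\mu_j}{r^2}\Bigr)\phi'w_jf\,dr
-\int_\sigma^\tau(-g'\phi+h'\phi^p)w_jf\,dr,
\]
and on a nodal interval $(\sigma_0,\tau_0)$ where $w_{j_0}>0$ (chosen to meet the support of $(\log\rho)''+g'-h'$), Lemma~\ref{lem:delnega} ($\phi'<0$, itself proved from \eqref{item:f'g'h'} and \eqref{item:gphpnega}), the bound $\mu_{j_0}\ge N-1$, and \eqref{item:f'g'h'} make the right-hand side strictly positive — this treats all $j\ge1$ uniformly, with no separate $\ell=1$ versus $\ell\ge2$ cases. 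The contradiction then comes from the endpoint signs $\lim_{\tau\nearrow\tau_0}\xi(\tau)\le0$ and $\lim_{\sigma\searrow\sigma_0}\xi(\sigma)\ge0$; the delicate case $\sigma_0=0$ is where \eqref{item:gphpnega} and the dichotomy \eqref{item:B12posi}/\eqref{item:B12wC1} are genuinely used (you located this difficulty correctly). Without the sign-definite discrepancy driving strict monotonicity of $\xi_{j_0}$, your argument does not close.
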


In the one dimensional case $N=1$, we can obtain the following nondegeneracy result.

\begin{theorem}\label{thm:gnondege1}
Let $p>1$ and $N=1$.
Assume \eqref{item:fgh}--\eqref{item:comorwcon}, \eqref{item:subsol1d}, and \eqref{item:R_phi}--\eqref{item:f'g'h'} except for \eqref{item:Riii}.
Let $\phi\in\mc{X}_\rho$ be the unique positive radial solution of \eqref{eq:gsp} satisfying \eqref{item:Aw}--\eqref{item:B12}.
Then $\phi$ is a nondegenerate critical point of $I_\rho$.
\end{theorem}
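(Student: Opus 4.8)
The plan is to reduce the triviality of $\ker I_\rho''(\phi)$ in the full space $\mc{X}_\rho$ to the radial nondegeneracy already available from Theorem~\ref{thm:nondode1}, exactly as in the proof of Theorem~\ref{thm:gnondege} for $N\ge2$, but with the decomposition into spherical harmonics replaced by the splitting into even and odd functions that is natural when $N=1$. Let $w\in\mc{X}_\rho$ satisfy $I_\rho''(\phi)w=0$, i.e.\ \eqref{eq:weq}. By \eqref{item:Aw} the solution $w$ lies in $C(\R)\cap C^1(\R\setminus\{0\})$ with $|w(x)|+|w'(x)|\to0$ as $|x|\to\infty$. Since $\rho$, $g$, $h$, and the radial ground state $\phi$ are even in $x$, the operator in \eqref{eq:weq} is invariant under the reflection $x\mapsto-x$; hence the even and odd parts $w_{\mr{e}}(x)\ce\tfrac12\bigl(w(x)+w(-x)\bigr)$ and $w_{\mr{o}}(x)\ce\tfrac12\bigl(w(x)-w(-x)\bigr)$ each belong to $\mc{X}_\rho$ (the $\|\cdot\|_{\mc{X}_\rho}$-norm being reflection invariant) and each solves \eqref{eq:weq}. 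The even part $w_{\mr{e}}$ is radial, so $w_{\mr{e}}\in\mc{X}$ and $I''(\phi)w_{\mr{e}}=0$; Theorem~\ref{thm:nondode1}, whose hypotheses are among those assumed here, then forces $w_{\mr{e}}=0$. It remains to show $w_{\mr{o}}=0$.

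First I would manufacture a positive supersolution on $(0,\infty)$ out of $\phi'$. Differentiating the radial equation in \eqref{eq:gode} and using $(\log f)''=(\log\rho)''$ (since $f=|S^{N-1}|r^{N-1}\rho$ with $N=1$) gives
\begin{equation*}
(\phi')''+\frac{f'}{f}(\phi')'-g\,\phi'+p\,h\,\phi^{p-1}\phi'
=-(\log\rho)''\,\phi'+g'\,\phi-h'\,\phi^{p}.
\end{equation*}
Because $\phi$ is the strictly decreasing positive ground state we have $\phi'<0$ on $(0,\infty)$ (Lemma~\ref{lem:delnega}), and the sign conditions \eqref{eq:3.5} of \eqref{item:f'g'h'}, namely $(\log\rho)''\ge0$, $g'\ge0$, $h'\le0$ with at least one strict, make the right-hand side nonnegative on $(0,\infty)$ and not identically zero. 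Writing the linearized operator in divergence form as $Lu\ce-(\rho u')'+\rho g\,u-p\rho h\phi^{p-1}u$, this says precisely that $\psi\ce-\phi'>0$ is a nontrivial supersolution: $L\psi\ge0$ on $(0,\infty)$ and $L\psi\not\equiv0$.

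The odd part is then killed by a ground-state substitution on the half-line. Testing \eqref{eq:weq} for $w_{\mr{o}}$ against itself gives $Q(w_{\mr{o}})=[\rho\,w_{\mr{o}}'\,w_{\mr{o}}]_0^\infty$, where $Q(u)\ce\int_0^\infty(\rho\,u'^2+\rho g\,u^2-p\rho h\phi^{p-1}u^2)\,dr$ is the quadratic form of $L$; both boundary contributions vanish (at infinity by \eqref{item:Aw}, at the origin by the analysis discussed below), so $Q(w_{\mr{o}})=0$. On the other hand, substituting $u=\psi v$ and integrating by parts turns $Q(\psi v)$ into
\begin{equation*}
Q(\psi v)=\int_0^\infty\rho\,\psi^2\,v'^2\,dr+\int_0^\infty(L\psi)\,\psi\,v^2\,dr\ge0,
\end{equation*}
both integrands being nonnegative since $\psi>0$ and $L\psi\ge0$; equality $Q(\psi v)=0$ then forces $v$ constant and $(L\psi)\,\psi\,v^2\equiv0$, hence $v\equiv0$ because $L\psi\not\equiv0$. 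Taking $v=w_{\mr{o}}/\psi$ we conclude $w_{\mr{o}}=0$, which together with $w_{\mr{e}}=0$ gives $w=0$ and proves nondegeneracy.

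I expect the delicate point to be the rigorous vanishing of the boundary terms at the origin in both integrations by parts, namely $\rho\,w_{\mr{o}}'\,w_{\mr{o}}\to0$ and $\rho\,\psi\,\psi'\,v^2\to0$ as $r\searrow0$. Here the one-dimensional refinement is essential: assumption \eqref{item:Riii}, which for $N\ge2$ guarantees $f\phi'\to0$, is unavailable because $f$ is essentially constant, and it is replaced by \eqref{item:subsol1d}, \eqref{item:gphpnega}, and \eqref{item:B12}. Using the representation \eqref{eq:expphi'} together with these hypotheses one determines the precise rates at which $\psi=-\phi'$ and $w_{\mr{o}}$ vanish at the origin, hence controls the quotient $v=w_{\mr{o}}/\psi$ and every boundary contribution. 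Carrying out this origin analysis, in the spirit of Section~\ref{subseq:guni}, is the step that the assumptions of the theorem are tailored to make possible.
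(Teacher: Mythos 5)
Your reduction to the radial case is exactly the paper's: the even/odd splitting, the observation that the even part lies in $\mc{X}$ and solves $I''(\phi)w_{\mr{e}}=0$, and the appeal to Theorem~\ref{thm:nondode1} coincide with the published proof (which writes $w_0,w_1$ for your $w_{\mr{e}},w_{\mr{o}}$). For the odd part, however, the paper does something different: it runs ``the same argument as in the proof of Theorem~\ref{thm:gnondege}'', i.e.\ a pointwise Wronskian argument on a nodal interval. Assuming $w_1\not\equiv0$, one picks $(\sigma_0,\tau_0)$ with $w_1>0$ there and $w_1(\sigma_0)=w_1(\tau_0)=0$, sets $\xi(r)=f(r)\phi''(r)w_1(r)-f(r)\phi'(r)w_1'(r)$ as in \eqref{eq:xi_j}, shows via \eqref{eq:fphiw} (with $\mu_j$ replaced by $0$) and \eqref{item:f'g'h'} that $\xi$ strictly increases across the interval, and contradicts the endpoint sign conditions \eqref{eq:xitaun}--\eqref{eq:xisigp}. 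Both routes rest on the same structural fact you isolate---$\psi=-\phi'>0$ is a supersolution of the linearized operator, by Lemma~\ref{lem:delnega} and \eqref{item:f'g'h'}---and your derivation of that fact is correct.

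The genuine gap is the boundary analysis you defer, and it is not a routine technicality: it is exactly the point where your route needs more than the hypotheses supply. Your two integrations by parts combine into the boundary term $\rho\psi^2v'v=\tfrac12\,\xi\cdot v$ with $v=w_{\mr{o}}/\psi$, i.e.\ the paper's Wronskian multiplied by the quotient $v$. The hypotheses control $\xi$ only, and only one-sidedly. At the origin, alternative \eqref{item:B12posi} gives merely $\liminf_{r\searrow0}f\phi''>-\infty$, and \eqref{item:Aw} does \emph{not} make $w$ (hence $w_{\mr{o}}$) $C^1$ at $0$; so there is no rate at which $w_{\mr{o}}\to0$, no lower bound on $\psi$, and hence no control of $v$, of $\rho\psi\psi'v^2=\rho\psi'w_{\mr{o}}^2/\psi$, or even of $\rho w_{\mr{o}}'w_{\mr{o}}$ (take $w_{\mr{o}}\sim r^{\beta}$, $0<\beta<1/2$, to see the last term can blow up). At infinity, \eqref{item:B12infty} bounds $f\phi'$ and $f\phi''$ but gives no lower bound on $|\phi'|$ relative to the unknown decay of $w_{\mr{o}}$, and $\rho$ itself may be unbounded (it is only assumed nondecreasing and log-convex), so $|w|+|w'|\to0$ does not give $\rho w_{\mr{o}}'w_{\mr{o}}\to0$. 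The paper's argument is immune to all of this because it never divides by $\psi$ and never needs actual limits: it needs only $\liminf_{\sigma\searrow0}\xi(\sigma)\ge0$ and $\lim_{\tau\nearrow\tau_0}\xi(\tau)\le0$, which follow from sign information on the nodal interval ($\phi'<0$, $w_1>0$, $\liminf w_1'(\sigma)\ge0$) together with the stated one-sided bounds and $f\phi'\to0$ from \eqref{item:subsol1d}. So your closing claim that the theorem's assumptions are ``tailored to make possible'' your origin analysis is the reverse of the truth: they are tailored to the Wronskian argument. Your Picone argument does go through in the concrete setting \eqref{eq:fgh} with $N=1$, $0<\alpha<1$ (where $\phi'\sim-Cr^{1-\alpha}$, $w\in C^1$, and everything decays exponentially with $\rho$ constant), hence would prove Theorem~\ref{thm:nond}; but it does not prove the abstract Theorem~\ref{thm:gnondege1} as stated.
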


\begin{remark} \label{rem:nondege}
The assumptions \eqref{item:Xrho}, \eqref{item:f'g'h'}, and \eqref{item:B12infty} are also imposed in \cite[Theorem~4]{ShioWata16}.
We relax other assumptions as follows:
\begin{itemize}
\item
The function $w$ need not be $C^1$ at the origin (compare \eqref{item:Aw} with \cite[(B13)]{ShioWata16}).
\item 
It is shown in \cite[Lemma~4]{ShioWata16} that if \eqref{item:f'g'h'} and \eqref{eq:nonasmp1} are satisfied, then \eqref{item:gphpnega} hold. Note that in our case \eqref{eq:fgh} with $1\le \alpha<2$, \eqref{eq:nonasmp1} is not satisfied. Instead, here we directly impose \eqref{item:gphpnega} as an assumption. In our case \eqref{eq:fgh}, we can easily check \eqref{item:gphpnega} because of the singularity of $g$.
\item
In \cite[(B12) (i)]{ShioWata16}, a stronger assumption more than \eqref{item:B12} is imposed, i.e., $\lim_{r\searrow0}\bigl(|f'(r)\phi'(r)|+f(r)|g(r)|+f(r)h(r)\bigr)<\infty$. This is not satisfied in our case \eqref{eq:fgh} if $N=1$ or if $N=2$ and $1\le\alpha<2$. 
\end{itemize}
\end{remark}

\subsection{Proof of Theorems~\ref{thm:gnondege} and \ref{thm:gnondege1}}\label{subsec:gnondege}

Let $\phi\in\mc{X}$ be the unique positive radial solution of \eqref{eq:gsp}. Throughout this subsection we impose the same assumption as in Theorem~\ref{thm:gnondege} if $N\ge2$ or Theorem~\ref{thm:gnondege1} if $N=1$.

\begin{lemma}\label{lem:delnega}
$\phi'<0$ in $(0,\infty)$.
\end{lemma}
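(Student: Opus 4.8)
The plan is to prove that the unique positive radial solution $\phi$ of \eqref{eq:gsp} is strictly decreasing, i.e.\ $\phi'(r)<0$ for all $r\in(0,\infty)$. First I would record the integral representation of $\phi'$ already available from Lemma~\ref{lem:phi'}, namely
\[
    \phi'(r)
   =\frac{1}{f(r)}\int_0^r f(\tau)\bigl(g(\tau)\phi(\tau)-h(\tau)\phi(\tau)^p\bigr)\,d\tau ,
\]
valid since $f>0$ on $(0,\infty)$ and the regularity/integrability hypotheses \eqref{item:fgh}--\eqref{item:R} hold. Because $f$ is positive, the sign of $\phi'(r)$ is exactly the sign of the integral $M(r)\ce\int_0^r f(\tau)\bigl(g(\tau)\phi(\tau)-h(\tau)\phi(\tau)^p\bigr)\,d\tau$, so the whole statement reduces to showing $M(r)<0$ for every $r>0$. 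Equivalently, writing the integrand as $-f(\tau)\bigl(-g(\tau)\phi(\tau)+h(\tau)\phi(\tau)^p\bigr)$, I must show this integrand stays negative, or at least that $M(r)$ never reaches $0$.

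The key structural input is the monotonicity package \eqref{item:f'g'h'}: $g'\ge0$, $h'\le0$, with $\phi$ positive. My plan is to control the quantity $\Psi(r)\ce -g(r)\phi(r)+h(r)\phi(r)^p$, which by assumption \eqref{item:gphpnega} is strictly positive on a right-neighborhood $(0,\delta)$ of the origin; hence $M'(r)=-f(r)\Psi(r)<0$ there and $M(r)<0$ for small $r$, giving $\phi'<0$ near $0$. To propagate this to all of $(0,\infty)$ I would argue by contradiction: suppose $r_0\ce\inf\{r>0:\phi'(r)\ge0\}$ is finite. At such a first zero $\phi'(r_0)=0$, and differentiating the ODE in \eqref{eq:gode} one gets, using $f'\ge0$ and $\phi'(r_0)=0$,
\[
    \phi''(r_0)
   = g(r_0)\phi(r_0)-h(r_0)\phi(r_0)^p
   = \Psi(r_0).
\]
The engine of the proof is then to show $\Psi(r_0)<0$ (equivalently $-g\phi+h\phi^p>0$) persists, i.e.\ that $\Psi$ cannot first vanish before $\phi'$ does; the monotonicity of $g,h,\phi$ lets one show that once $\phi'<0$ on $(0,r_0)$ the function $g(r)\phi(r)-h(r)\phi(r)^p$ stays below $0$ there. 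That forces $\phi''(r_0)<0$ at the putative first critical point, contradicting the minimality of $r_0$ (a strict local max of $\phi'$ at $r_0$ with $\phi'(r_0)=0$ would require $\phi'$ to have been positive just before $r_0$). Thus no such $r_0$ exists and $\phi'<0$ throughout.

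The main obstacle I anticipate is the interplay between the sign of $\Psi=-g\phi+h\phi^p$ and the sign of $\phi'$: the naive monotonicity argument is circular because knowing $\phi$ is decreasing is what I am trying to prove, yet I need decreasingness of $\phi$ to keep $g\phi$ small and $h\phi^p$ controlled. The clean way around this is to run everything on the interval $(0,r_0)$ where $\phi'<0$ is already known by the choice of $r_0$, and to track $\Psi$ as a function whose derivative $\Psi'=-g'\phi-g\phi'+h'\phi^p+ph\phi^{p-1}\phi'$ has a definite sign once $\phi'\le0$, $g'\ge0$, $h'\le0$, and $g\le0$ (the last holding near the origin where $g$ carries the singular attractive term, cf.\ the explicit $g(r)=\omega-\gamma/r^\alpha<0$ for small $r$ in \eqref{eq:fgh}). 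Keeping the signs bookkept carefully across the full range — rather than only near the origin — is the delicate point, but assumptions \eqref{item:gphpnega} and \eqref{item:f'g'h'} are precisely tailored to close it.
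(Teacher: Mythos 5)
Your first step is correct and matches how the proof cited by the paper (the paper itself simply invokes Shioji--Watanabe, Lemma~4, together with \eqref{item:f'g'h'} and \eqref{item:gphpnega}) begins: by \eqref{eq:expphi'} and \eqref{item:gphpnega}, $f(r)\phi'(r)=-\int_0^r f\Psi\,d\tau<0$ on $(0,\delta)$, where $\Psi\ce-g\phi+h\phi^p$. But the engine of your argument fails. Setting aside the sign slip (at a critical point the ODE \eqref{eq:gode} gives $\phi''(r_0)=g(r_0)\phi(r_0)-h(r_0)\phi(r_0)^p=-\Psi(r_0)$, not $+\Psi(r_0)$), your key claim --- that $\Psi$ stays positive up to the first critical point $r_0$ because of the monotonicity of $g,h,\phi$ --- is false, for two reasons. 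First, the monotonicity runs the wrong way: writing $\Psi=\phi Q$ with $Q\ce-g+h\phi^{p-1}$, one has $Q'=-g'+h'\phi^{p-1}+(p-1)h\phi^{p-2}\phi'$, and every term is $\le0$ precisely where $\phi'\le0$, $g'\ge0$, $h'\le0$; your own computation of $\Psi'$ likewise yields $\Psi'\le0$. So on $(0,r_0)$ the quantity you need to keep positive is \emph{decreasing}, and positivity near the origin cannot propagate by this mechanism. Second, and decisively, your claim is incompatible with $r_0$ being a zero of $\phi'$ at all: since $f(r_0)\phi'(r_0)=-\int_0^{r_0}f\Psi\,d\tau$, having $\Psi>0$ on $(0,r_0)$ would force $\phi'(r_0)<0$. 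At a genuine first zero the integral vanishes, so $\Psi$ must already have turned negative inside $(0,r_0)$, and then the (correctly oriented) monotonicity of $Q$ gives $\Psi(r_0)<0$, hence $\phi''(r_0)=-\Psi(r_0)>0$: $\phi$ has a strict local \emph{minimum} at $r_0$ and turns upward. The second-derivative test at the first critical point therefore produces no contradiction.

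This gap is not repairable within a purely local argument: the ODE admits positive non-monotone solutions (oscillating about the positive zero of $-g\phi+h\phi^p$ for large $r$) that satisfy \eqref{item:f'g'h'} and \eqref{item:gphpnega}, so any correct proof must use that $\phi$ is the solution lying in $\mc{X}_\rho$, i.e.\ its decay and integrability at infinity --- information your proposal never invokes. The argument the paper points to handles exactly the local-minimum scenario by a global step: with the energy $E(r)\ce\tfrac12\phi'(r)^2-\tfrac12 g(r)\phi(r)^2+\tfrac1{p+1}h(r)\phi(r)^{p+1}$ one computes $E'=-\tfrac{f'}{f}\phi'^2-\tfrac12 g'\phi^2+\tfrac1{p+1}h'\phi^{p+1}\le0$ by \eqref{item:f'g'h'}; since $Q(r_0)<0$ means $0<h(r_0)\phi(r_0)^{p-1}<g(r_0)$, one gets $E(r_0)<0$, hence $g\phi^2\ge-2E(r_0)>0$ on $[r_0,\infty)$, which contradicts $\int_0^\infty|g|\phi^2f\,dr<\infty$ (a consequence of $\phi\in\mc{X}_\rho$ and \eqref{item:Xrho}, using that $f$ is positive and nondecreasing). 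That global ingredient is the heart of the lemma and is entirely missing from your proposal.
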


\begin{proof}
We can show the assertion in the same as Proof of \cite[Lemma~4]{ShioWata16} by using \eqref{item:f'g'h'} and \eqref{item:gphpnega}.
\end{proof}

To prove Theorem~\ref{thm:gnondege} we use the spherical harmonics expansion. Let $(\mu_j)_{j=0}^\infty$ be the eigenvalues of the Laplace-Beltrami operator on $L^2(S^{N-1})$ and let $(e_j)_{j=0}^\infty$ be their corresponding normalized eigenfunctions. It is known that
\begin{equation}\label{eq:evLB}
  0=\mu_0
   <\mu_1
   =\dots
   =\mu_N
   =N-1
   <\mu_{N+1}
   \le\cdots, \end{equation}
and $(e_j)_{j=0}^\infty$ is a complete orthogonal basis of $L^2(S^{N-1})$.

Let $w\in\mc{X}_\rho$ satisfy $I_\rho''(\phi)w=0$. When $N\ge2$, we regard $w$ as a function of $(r,\hat x)\ce(|x|,x/|x|)$. For $j\in\N\cup\{0\}$ we put
\begin{equation}\label{eq:defwj}
    w_j(x)
   \ce\int_{S^{N-1}}w(r,\hat x)e_j(\hat x)\,d\hat x. \end{equation}

\begin{lemma} \label{lem:w0=0}
Let $N\ge2$. Then $w_j\in \mc{X}\cap C^2(0,\infty)$. Moreover, $w_j$ satisfies
\begin{equation}\label{eq:w_j}
    w_j''
    +\frac{f'(r)}{f(r)}w_j'
    +\left(-g(r)
    +ph(r)\phi(r)^{p-1}
    -\frac{\mu_j}{r^2}\right)w_j
   =0,\quad
    r>0. \end{equation}
In particular, $w_0\equiv 0$.
\end{lemma}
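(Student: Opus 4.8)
The plan is to prove Lemma~\ref{lem:w0=0} by using the spherical harmonics expansion together with the variational/uniqueness structure already established for the radial problem. First I would verify the regularity and the ODE~\eqref{eq:w_j}. Starting from $w\in\mc{X}_\rho$ with $I_\rho''(\phi)w=0$, i.e.\ $w$ solves~\eqref{eq:weq}, I expand $w$ in the orthonormal basis $(e_j)$ of $L^2(S^{N-1})$ via the projections $w_j$ defined in~\eqref{eq:defwj}. Writing the Laplacian in polar coordinates as $\Delta=\partial_r^2+\tfrac{N-1}{r}\partial_r+\tfrac{1}{r^2}\Delta_{S^{N-1}}$ and using that $\phi$ is radial so that the coefficients $g$ and $ph\phi^{p-1}$ depend only on $r$, each angular mode decouples. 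Multiplying~\eqref{eq:weq} by $e_j(\hat x)$ and integrating over $S^{N-1}$, the term $\tfrac{1}{r^2}\Delta_{S^{N-1}}w$ contributes $-\tfrac{\mu_j}{r^2}w_j$ because $e_j$ is an eigenfunction with eigenvalue $\mu_j$, and the drift term $\nabla\rho\cdot\nabla w/\rho$ reduces to $\tfrac{f'(r)}{f(r)}w_j'$ since $\rho$ is radial and $f=|S^{N-1}|r^{N-1}\rho$; this yields~\eqref{eq:w_j}. The regularity $w_j\in C^2(0,\infty)$ follows from this ODE with $C^1$ coefficients on $(0,\infty)$, and $w_j\in\mc{X}$ follows from $w\in\mc{X}_\rho$ since the projection onto a single angular mode is bounded on $\mc{X}_\rho$ and lands in the radial completion.

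The substantive assertion is $w_0\equiv0$. The key observation is that the $j=0$ equation, using $\mu_0=0$ from~\eqref{eq:evLB}, is exactly
\[
    w_0''+\frac{f'(r)}{f(r)}w_0'-g(r)w_0+ph(r)\phi(r)^{p-1}w_0=0,
\]
which is precisely the linearized radial equation $I''(\phi)w_0=0$. Thus $w_0\in\ker I''(\phi)$ inside the radial space $\mc{X}$. My plan is therefore to invoke the radial nondegeneracy, namely Theorem~\ref{thm:nondode} (for $N\ge2$), whose hypotheses~\eqref{item:fgh}--\eqref{item:R_phi} are part of the standing assumptions of this subsection; that theorem gives $\ker I''(\phi)=\{0\}$, forcing $w_0\equiv0$.

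The point requiring care is to confirm that $w_0$ genuinely lies in the space $\mc{X}$ on which Theorem~\ref{thm:nondode} applies, rather than merely being a smooth solution of the radial ODE; this is where the norm equivalences~\eqref{item:normeq1}/\eqref{item:Xrho} and the membership $w\in\mc{X}_\rho$ are used, since I must control both the $\|\cdot\|_{\mc{X}}$-norm of the projection and ensure $w_0$ inherits the correct behavior at the origin and at infinity from $w$ via assumption~\eqref{item:Aw}. Concretely, by Cauchy--Schwarz on $S^{N-1}$ the radial profile $w_0$ satisfies $\|w_0\|_{\mc{X}}^2\le\|w\|_{\mc{X}_\rho}^2<\infty$, so $w_0\in\mc{X}$, and the decay of $w$ at infinity guaranteed by~\eqref{item:Aw} transfers to $w_0$. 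Once $w_0\in\mc{X}$ and $I''(\phi)w_0=0$ are both secured, the conclusion $w_0\equiv0$ is immediate.

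I expect the main obstacle to be purely bookkeeping rather than conceptual: justifying the termwise differentiation and the interchange of the angular integration with the differential operator in $r$ (so that the projection of~\eqref{eq:weq} really is~\eqref{eq:w_j}), which requires the regularity of $w$ away from the origin provided by~\eqref{item:Aw} together with dominated convergence. The core of the lemma is the clean algebraic fact that the zeroth angular mode sees the unperturbed radial linearization, after which radial nondegeneracy does all the work.
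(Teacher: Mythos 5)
Your proposal is correct and follows essentially the same route as the paper's proof: project \eqref{eq:weq} onto the spherical harmonic $e_j$ in polar coordinates to obtain \eqref{eq:w_j}, verify $w_j\in\mc{X}$ by Cauchy--Schwarz on $S^{N-1}$ combined with the norm equivalence \eqref{item:Xrho} (needed since $g$ may change sign, so the bound is not literally $\|w_0\|_{\mc{X}}\le\|w\|_{\mc{X}_\rho}$ but passes through the $|g|$-weighted norm), and then conclude $w_0\equiv0$ from the radial nondegeneracy of Theorem~\ref{thm:nondode} since $\mu_0=0$. This is exactly the paper's argument.
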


\begin{proof}
First, we show $w_j\in \mc{X}$. From H\"older's inequality and $\|e_j\|_{L^2(S^{N-1})}=1$, we have
\begin{align*}
  & \|w_j\|_{\mc{X}}^2
  =\int_0^\infty \left(w'_j(r)^2+g(r)w_j(r)^2\right)f(r)\,dr
\\&=\int_0^\infty \biggl\{\left(\int_{S^{N-1}}\partial_rw(r,\hat x)e_j(\hat x)\,d\hat{x}\right)^2+g(r)\left(\int_{S^{N-1}}w(r,\hat x)e_j(\hat x)\,d\hat{x}\right)^2\biggr\}f(r)\,dr
\\&\le\int_0^\infty \biggl\{\int_{S^{N-1}}\partial_rw(r,\hat x)^2\,d\hat{x}+|g(r)|\int_{S^{N-1}}w(r,\hat x)^2\,d\hat{x}\biggr\}f(r)\,dr
   \lesssim \|w\|_{\mc{X}_\rho}^2,
\end{align*}
where we used \eqref{item:Xrho} in the last inequality.

Next we show \eqref{eq:w_j}.
From the equation $I_\rho''(\phi)w=0$, we have
\begin{align*}
 0&=\Delta w
    +\frac{\nabla w\cdot\nabla\rho}{\rho}
    -gw
    +ph\phi^{p-1}w \\
  &=\partial_r^2w
    +\frac{f'}{f}\partial_rw
    +\frac{1}{r^2}\Delta_{S^{N-1}}w
    -gw
    +ph\phi^{p-1}w. \end{align*}
Therefore, $w_j$ satisfies the equation \eqref{eq:w_j}. From \eqref{eq:w_j}, we see that $w_j$ belongs to $C^2(0,\infty)$. Since $\mu_0=0$ and $w_0\in\mc{X}$, Theorem~\ref{thm:nondode} implies $w_0\equiv 0$. This completes the proof.
\end{proof}

\begin{lemma}
Let $N\ge2$. Then $w_j$ with $j\ge1$ satisfies
\begin{align}\label{eq:fphiw}
	-\int_\sigma^\tau\left(\left(\frac{f'}{f}\right)'+\frac{\mu_{j}}{r^2}\right)\phi'w_{j}f\,dr
	-\int_\sigma^\tau(-g'\phi+h'\phi^p)w_{j}f\,dr 
   =\xi_j(\tau)-\xi_j(\sigma) \end{align}
for any $0<\sigma<\tau$, where
\begin{equation}\label{eq:xi_j}
	\xi_j(r)
   \ce f(r)\phi''(r)w_{j}(r)
	-f(r)\phi'(r)w_{j}'(r). \end{equation}
\end{lemma}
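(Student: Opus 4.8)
The plan is to recognize $\xi_j$ as, up to the weight $f$, the Wronskian of the two functions $\phi'$ and $w_j$, and to exploit that these two functions are governed by the \emph{same} Sturm--Liouville operator. First I would verify that $\phi\in C^3(0,\infty)$ so that differentiating the profile equation is legitimate: rewriting the equation in \eqref{eq:gode} as $\phi''=g\phi-h\phi^p-(f'/f)\phi'$ and using that $f\in C^3(0,\infty)$ is positive, $g\in C^1$, $h\in C^3$, and $\phi\in C^2(0,\infty)$, the right-hand side is $C^1$, whence $\phi''\in C^1$ and $\phi\in C^3(0,\infty)$. Differentiating \eqref{eq:gode} and multiplying by $f$ then puts the equation for $\phi'$ into self-adjoint form, while \eqref{eq:w_j} multiplied by $f$ does the same for $w_j$:
\[
\mc{A}[\phi']=f\left(-\left(\frac{f'}{f}\right)'\phi'+g'\phi-h'\phi^p\right),\qquad
\mc{A}[w_j]=f\,\frac{\mu_j}{r^2}\,w_j,
\]
where $\mc{A}[y]\ce(fy')'+f\bigl(-g+ph\phi^{p-1}\bigr)y$ is one and the same operator applied to $\phi'$ and to $w_j$.

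The heart of the argument is the Lagrange identity for $\mc{A}$. Because $\mc{A}$ has divergence-form principal part $(fy')'$, the zeroth-order terms cancel and one obtains the pointwise identity
\[
w_j\,\mc{A}[\phi']-\phi'\,\mc{A}[w_j]=\frac{d}{dr}\Bigl(f\bigl(\phi''w_j-\phi'w_j'\bigr)\Bigr)=\xi_j'(r),
\]
the bracketed quantity being exactly $\xi_j$ from \eqref{eq:xi_j}. Substituting the two expressions for $\mc{A}[\phi']$ and $\mc{A}[w_j]$ found above and collecting terms gives
\[
\xi_j'(r)=-f\left(\left(\frac{f'}{f}\right)'+\frac{\mu_j}{r^2}\right)\phi'w_j+f\bigl(g'\phi-h'\phi^p\bigr)w_j,
\]
which is precisely the integrand appearing on the left-hand side of \eqref{eq:fphiw}. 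Integrating over $[\sigma,\tau]$ and invoking the fundamental theorem of calculus then yields the claimed identity $\xi_j(\tau)-\xi_j(\sigma)$.

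I expect no serious obstacle here. In contrast to the delicate $r\searrow0$ analysis elsewhere in the paper, this statement is purely local on $(0,\infty)$ with both $\sigma$ and $\tau$ strictly positive and finite, so no boundary or singularity issues arise and every function involved is $C^2$ (indeed $\phi\in C^3$) on the relevant interval. The only points requiring care are the regularity check that licenses differentiating the profile equation and the bookkeeping in the Lagrange identity; the essential cancellation of the $f'$-terms there is forced by \eqref{eq:gode} itself, encoded in the self-adjoint rewriting. Alternatively, one could bypass the operator $\mc{A}$ entirely, differentiate $\xi_j$ directly, and substitute \eqref{eq:w_j} for $fw_j''$ and the differentiated form of \eqref{eq:gode} for $f\phi'''$, verifying the cancellations by hand; the two routes are equivalent.
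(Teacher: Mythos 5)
Your proposal is correct and is essentially the paper's own argument: the paper likewise differentiates \eqref{eq:gode}, multiplies \eqref{eq:w_j} by $-f\phi'$ and the differentiated equation by $w_jf$, and sums --- which is precisely your Lagrange identity $w_j\,\mc{A}[\phi']-\phi'\,\mc{A}[w_j]=\xi_j'$ written out by hand --- before integrating over $(\sigma,\tau)$. Your explicit check that $\phi\in C^3(0,\infty)$ (which the paper leaves implicit when differentiating the profile equation) is a harmless, correct addition.
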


\begin{proof}
By differentiating \eqref{eq:gode} we have
\begin{equation}\label{eq:delode}
\phi'''
+\frac{f'}{f}\phi''
+\left(\left(\frac{f'}{f}\right)'-g+ph\phi^{p-1}\right)\phi'
+(-g'\phi+h'\phi^{p})
=0
\end{equation}
for all $r>0$.
We multiply \eqref{eq:w_j} by $-f\phi'$,
multiply \eqref{eq:delode} by $w_{j}f$, and sum these two equations to have
\begin{equation*}
\left(f(\phi''w_{j}-\phi'w_{j}')\right)'
+\left(\left(\frac{f'}{f}\right)'+\frac{\mu_{j}}{r^2}\right)\phi'w_{j}f
+(-g'\phi+h'\phi^p)w_{j}f
=0.
\end{equation*}
Integrating this over the interval $(\sigma,\tau)$, we get
\begin{align*}
    -\int_\sigma^\tau\left(\left(\frac{f'}{f}\right)'+\frac{\mu_{j}}{r^2}\right)\phi'w_{j}f\,dr
	-\int_\sigma^\tau(-g'\phi+h'\phi^p)w_{j}f\,dr 
\\ \notag 
	\qquad=\int_\sigma^\tau\left(f(\phi''w_{j}-\phi'w_{j}')\right)'\,dr
   =\xi_j(\tau)-\xi_j(\sigma). \end{align*}
This completes the proof.
\end{proof}

\begin{proof}[Proof of Theorem~\ref{thm:gnondege}]
From \eqref{item:Aw} we see that $w_j$ is continuous at $0$, and $|w_j(r)|+|w_j'(r)|\to0$ as $r\to\infty$.
Since $e_0$ is a constant and $(e_0,e_j)_{L^2(S^{N-1})}=0$ for all $j\in\mb{N}$, by the definition of $w_j$ we have $w_{j}(0)=0$.
Therefore, it follows that 
\begin{equation}\label{eq:w0infty}
w_{j}(0)=\lim_{r\to\infty}w_j(r)=0. \end{equation}

Now suppose that $w\ne0$. Then since $w_0\equiv0$ by Lemma~\ref{lem:w0=0}, there exists $j_0\ge1$ such that $w_{j_0}\not\equiv0$.
From \eqref{eq:w0infty} we see that there exist $\sigma_0$ and $\tau_0$ such that $0\le\sigma_0<\tau_0\le\infty$, $w_{j_0}(\sigma_0)=w_{j_0}(\tau_0)=0$, $w_{j_0}(r)\ne0$ for all $r\in(\sigma_0,\tau_0)$,
and $\on{supp}((\log\rho)''+g'-h')\cap(\sigma_0,\tau_0)\ne\emptyset$.
Without loss of generality, we may assume 
\begin{equation}\label{eq:wposi}
w_{j_0}(r)>0,\quad r\in(\sigma_0,\tau_0).
\end{equation}
Note that \eqref{eq:wposi} implies 
\[
  \begin{aligned}
  &w_{j_0}'(\tau_0)\le0\le w_{j_0}'(\sigma_0) && \text{if $0<\sigma_0<\tau_0<\infty$}, \\
  &\liminf_{\sigma\searrow0}w_{j_0}'(\sigma)\in[0,\infty] && \text{if $\sigma_0=0$}. \end{aligned} \]

In what follows we show 
\begin{align}
   \label{eq:xisigtau}
  & -\infty
   \le\lim_{\sigma\searrow\sigma_0}\xi(\sigma)
    <\lim_{\tau\nearrow\tau_0}\xi(\tau), 
\\ \label{eq:xitaun}
  & \lim_{\tau\nearrow\tau_0}\xi(\tau)
    \le0, 
\\ \label{eq:xisigp}
  & \lim_{\sigma\searrow\sigma_0}
    \xi(\sigma)\in[0,\infty],
\end{align}
where $\xi\ce\xi_{j_0}$ is defined by \eqref{eq:xi_j}.
These inequalities derive a contradiction.

First, we show \eqref{eq:xisigtau}.
By assumption~\eqref{item:f'g'h'} and $N-1\le\mu_{j_0}$, we have
\[
0\le(\log\rho(r))''
=\left(\frac{f'}{f}\right)'
+\frac{N-1}{r^2}
\le\left(\frac{f'}{f}\right)'
+\frac{\mu_{j_0}}{r^2}.
\]
Therefore, by using \eqref{eq:fphiw}, Lemma~\ref{lem:delnega}, \eqref{eq:wposi}, and strictness of the inequalities in \eqref{eq:3.5}, we obtain \eqref{eq:xisigtau}.

Next, we show \eqref{eq:xitaun}. Note that 
\begin{equation}\label{eq:phi''}
f(r)\phi''(r)
=-f'(r)\phi'(r)
+f(r)g(r)\phi(r)
-f(r)h(r)\phi(r)^{p}.
\end{equation}
In the case $\tau_0<\infty$, by $w_{j_0}(\tau_0)=0$, $w_{j_0}'(\tau_0)\le0$, and Lemma~\ref{lem:delnega} we have $\xi(\tau_0)
=-f(\tau_0)\phi'(\tau_0)w_{j_0}'(\tau_0)\le0$. In the case $\tau_0=\infty$, from \eqref{eq:phi''}, \eqref{eq:w0infty}, and the assumption \eqref{item:B12infty} we have $\lim_{\tau\to\infty}\xi(\tau)=0$.

Finally, we show \eqref{eq:xisigp}. 
In the case $\sigma_0>0$, since $w_{j_0}(\sigma_0)=0$ and $w_{j_0}'(\sigma_0)\ge0$, we have $\xi(\sigma_0)=-f(\sigma_0)\phi'(\sigma_0)w_{j_0}'(\sigma_0)\ge0$.

In what follows, we show \eqref{eq:xisigp} in the case $\sigma_0=0$.
By \eqref{eq:fphi'0}, Lemma~\ref{lem:delnega}, and $\liminf_{\sigma\searrow0}w_{j_0}'(\sigma)\in[0,\infty]$, we have 
\[
    \liminf_{\sigma\searrow0}(-f(\sigma)\phi'(\sigma)w_{j_0}'(\sigma))\in[0,\infty]. \]
Therefore, from the expression \eqref{eq:xi_j}, it suffice to show that 
\begin{equation} \label{eq:fpdw}
\liminf_{\sigma\searrow0}f(\sigma)\phi''(\sigma)w_{j_0}(\sigma)\in[0,\infty]. \end{equation}
In order to show \eqref{eq:fpdw} we consider two cases.

\noindent
\textbf{Case 1:} \eqref{item:B12posi} holds. From \eqref{eq:phi''}, \eqref{item:B12posi}, \eqref{eq:wposi}, and $\lim_{\sigma\searrow 0}w_{j_0}(\sigma)=0$, it follows that 
\begin{align*}
  & \liminf_{\sigma\searrow0}f(\sigma)\phi''(\sigma)w_{j_0}(\sigma)
\\&=\liminf_{\sigma\searrow0}\bigl(-f'(\sigma)\phi'(\sigma)+f(\sigma)g(\sigma)\phi(\sigma)
	-f(\sigma)h(\sigma)\phi(\sigma)^p\bigr)w_{j_0}(\sigma)
	\in[0,\infty]. \end{align*}

\noindent
\textbf{Case 2:} \eqref{item:B12wC1} holds.
Since $w_{j_0}\in C^1[0,\infty)$ and $w_{j_0}(0)=0$, we have $w_{j_0}(\sigma)=O(\sigma)$ as $\sigma\searrow0$.
Therefore, from \eqref{item:B12wC1}, $f'\ge0$, $\phi'<0$, and \eqref{eq:wposi}, we obtain
\begin{align*}
  & \liminf_{\sigma\searrow0}f(\sigma)\phi''(\sigma)w_{j_0}(\sigma)
\\&=\liminf_{\sigma\searrow0}\bigl(-f'(\sigma)\phi'(\sigma)+f(\sigma)g(\sigma)\phi(\sigma)
	-f(\sigma)h(\sigma)\phi(\sigma)^p\bigr)w_{j_0}(\sigma)
\\&=\liminf_{\sigma\searrow0}\bigl(-f'(\sigma)\phi'(\sigma)w_{j_0}(\sigma)\bigr)
	\in[0,\infty]. \end{align*}
In any case, we have \eqref{eq:xisigp}.

From the above we obtain \eqref{eq:xisigtau}--\eqref{eq:xisigp}. There is a contradiction. This completes the proof.
\end{proof}

\begin{proof}[Proof of Theorem~\ref{thm:gnondege1}]
Let $w\in\mc{X}_\rho$ satisfy $I_\rho''(\phi)w=0$. We decompose $w$ into the even part and the odd part as
\[
    w
   =w_0+w_1,\quad
    w_0(x)
   \ce\frac{w(x)+w(-x)}{2},\quad
    w_1(x)
   \ce\frac{w(x)-w(-x)}{2}. \]
Then $w_0$ belongs to $\mc{X}$. 
Moreover, since $w$ satisfies
\[
\partial_x^2w
+\frac{x}{|x|}\frac{f'(|x|)}{f(|x|)}\partial_xw
-g(|x|)w
+ph(|x|)\phi(|x|)^{p-1}w
=0,\quad x\in\mb{R},
\]
we see that $w_0$ satisfies $I''(\phi)w_0=0$.
Therefore, Theorem~\ref{thm:nondode1} implies $w_0\equiv0$.

Since $w_1$ satisfies $w_1(0)=0$ and 
\[
    -w_1''
    +\frac{f'(r)}{f(r)}w_1'
    -g(r)w_1
    +ph(r)\phi(r)^{p-1}w_1
   =0,\quad
    r>0, \]
by the same argument as in the proof of Theorem~\ref{thm:gnondege}, we obtain $w_1\equiv0$.
Hence, $w\equiv 0$, which completes the proof.
\end{proof}

\subsection{Proof of Theorem~\ref{thm:nond}}\label{subsec:nond}

\begin{proof}[Proof of Theorem~\ref{thm:nond}]
It suffices to verify the assumptions of Theorem~\ref{thm:gnondege} and \ref{thm:gnondege1}.

\eqref{item:normeq1}, \eqref{item:comorwcon}, and \eqref{item:Xrho}: Since $\omega>\omega_0$, the following equivalences of norms hold:
\[
	\|\nabla v\|_{L^2}^2
	-\gamma\int_{\mb{R}^N}\frac{|v|^2}{|x|^\alpha}\,dx
	+\omega\|v\|_{L^2}^2
   \simeq\|v\|_{H^1}^2
   \simeq \|\nabla v\|_{L^2}^2
	+\gamma\int_{\mb{R}^N}\frac{|v|^2}{|x|^\alpha}\,dx
	+\omega\|v\|_{L^2}^2. \]
Therefore, \eqref{item:normeq1} and \eqref{item:Xrho} hold. We also see that \eqref{item:wcon} holds with $\hat g(r)=\omega$. Note that \eqref{item:com} also holds if $N\ge2$.

\eqref{item:subsol} for $N\ge2$, \eqref{item:subsol1d} for $N=1$, \eqref{item:R_phi}, \eqref{item:Aw}, and \eqref{item:B12infty}: These conditions can be verified by elliptic regularity arguments.

\eqref{item:f'g'h'}: Since $f'(r)=(N-1)r^{N-2}$, $(\log\rho(r))''=0$, $g'(r)=\alpha\gamma r^{-\alpha-1}>0$, and $h'(r)=0$, we have \eqref{item:f'g'h'}.

\eqref{item:gphpnega}: It is obvious that $-g(r)\phi(r)+h(r)\phi(r)^p>0$ for small $r>0$ because of the singularity of $g$.

\eqref{item:B12}: First, it is easy to verify that \eqref{item:B12posi} holds if $N=2$ and $0<\alpha\le1$, or $N\ge3$ since $-f'\phi'\ge0$, $\liminf_{r\searrow0}f(r)g(r)>-\infty$, and $f(r)h(r)\to0$ as $r\searrow0$.

Next, we show that \eqref{item:B12posi} holds if $N=2$ and $1<\alpha<2$.
By using the formula \eqref{eq:expphi'}, we have 
\[
\phi'(r)=-\frac{\gamma}{2-\alpha}\phi(0)r^{1-\alpha}
+o(r^{1-\alpha})
\]
as $r\searrow0$.
Therefore, we obtain
\[
-f'(r)\phi'(r)
+f(r)g(r)\phi(r)
-f(r)h(r)\phi(r)^p
=\frac{\gamma(\alpha-1)}{2-\alpha}\phi(0)r^{1-\alpha}
+o(r^{1-\alpha})
\]
as $r\searrow0$.
This implies \eqref{item:B12posi}.

Finally, we show that \eqref{item:B12wC1} holds if $N=1$. 
Since $0<\alpha<1$, we see that $f(r)|g(r)|+f(r)h(r)\sim r^{-\alpha}=o(r^{-1})$ as $r\searrow0$.
We can show that $w\in C^1(\mb{R}^N)$ by an elliptic regularity argument.
This finishes the proof.
\end{proof}

\section{Discussion on instability of standing waves}\label{sec:A}

In this section we discuss instability of standing wave solutions 
\[
	u_\omega(t,x)=e^{i\omega t}\phi_\omega(x) \] 
for \eqref{eq:nlsi}, where $\phi_\omega$ is the unique positive radial solution of \eqref{eq:sp} given in Corollary~\ref{cor:uniquegs}. It is known that the Cauchy problem for \eqref{eq:nlsi} is locally well-posed in $H^1(\mb{R}^N)$ (see \cite[Section~3]{Caze03}).
The definition of stability of standing waves is as follows.

\begin{definition}
We say that the standing wave solution $u_\omega(t)$ for \eqref{eq:nlsi} is \textit{stable} if for each $\varepsilon>0$ there exists $\delta>0$ such that if $u_0\in H^1(\mathbb{R}^N)$ satisfies 
$\|u_0-\phi_\omega\|_{H^1}<\delta$,
then the solution $u(t)$ of \eqref{eq:nlsi} with $u(0)=u_0$ exists globally in time and satisfies
\[
\sup_{t\ge0}\inf_{\theta\in\mathbb{R}}\|u(t)-e^{i\theta}\phi_\omega\|_{H^1}<\varepsilon.
\]
Otherwise, we say that it is \textit{unstable}.

We say that the standing wave solution $u_\omega(t)$ for \eqref{eq:nlsi} is \textit{strongly unstable} if for each $\varepsilon>0$ there exists $u_0\in H^1(\mathbb{R}^N)$ satisfies $\|u_0-\phi_\omega\|_{H^1}<\ve$ and the solution $u(t)$ of \eqref{eq:nlsi} with $u(0)=u_0$ is blows up in finite time.
\end{definition}

We recall some known results on stability and instability of standing waves. Fukuizumi and Ohta~\cite{FukuOhta03S} proved that if $\omega$ is sufficiently close to $\omega_0$, or if $1<p<1+4/N$ and $\omega$ is sufficiently large, the standing wave $u_\omega(t)$ is stable. In \cite{FukuOhta03I} they proved that if 
\begin{equation*}
	\partial_\lambda^2S_\omega(\phi_\omega^\lambda)|_{\lambda=1}<0,\quad
	\text{where $\phi_\omega^\lambda(x)\ce\lambda^{N/2}\phi_\omega(\lambda x)$}
\end{equation*}
is the $L^2$-invariant scaling, then the standing wave $u_\omega(t)$ is unstable. As a corollary they also proved that if $1+4/N<p<2^*-1$ and $\omega$ is sufficiently large, the standing wave is unstable. Recently, Fukaya and Ohta~\cite{FukaOhta19} prove that if 
\begin{equation}\label{eq:insta}
	\partial_\lambda^2S_\omega(\phi_\omega^\lambda)|_{\lambda=1}\le 0, \end{equation} 
the standing wave $u_\omega(t)$ is strongly unstable.

Now we turn contributions of our results. From the variational characterization by the Nehari functional (Lemma~\ref{lem:gs}), it is known that the linearized operator $L_1$ has at most one negative eigenvalue. Moreover, since $\langle L_1\phi_\omega,\phi_\omega\rangle=-(p-1)\|\phi_\omega\|_{L^{p+1}}^{p+1}<0$, the operator $L_1$ has exactly one simple negative eigenvalue. In addition, since $L_2\phi_\omega=0$ and $\phi_\omega$ is positive, it follows from Lemma~\ref{lem:unief} that $L_2$ is a nonnegative operator and that its kernel is spanned by the ground state $\phi_\omega$. Combining these facts, the nondegeneracy of $\phi_\omega$ (Theorem~\ref{thm:nond}), and Weyl's essential spectral theorem, we have the following spectral properties.

\begin{proposition}\label{prop:spec}
Assume \eqref{eq:asmp}, let $\phi_\omega\in\mc{G}_\omega$ be the positive ground state, and let $L_1$ and $L_2$ be the operators given in \eqref{eq:L1} and \eqref{eq:L2}, respectively.
Then the operator $L_1$ has a simple negative eigenvalue, $\ker L_2=\on{span}\{\phi_\omega\}$,
and other spectra of $L_1$ and $L_2$ are positive and bounded away from zero.
\end{proposition}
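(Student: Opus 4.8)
The plan is to assemble the proposition from the facts recalled just above, handling $L_1$ and $L_2$ separately and then locating the essential spectrum via Weyl's theorem. First I would pin down the number of nonpositive eigenvalues of $L_1$. Since $\phi_\omega$ solves \eqref{eq:sp}, a direct computation gives
\[
    L_1\phi_\omega
   =-\Delta\phi_\omega
    -\frac{\gamma}{|x|^\alpha}\phi_\omega
    +\omega\phi_\omega
    -p\phi_\omega^{p}
   =(1-p)\phi_\omega^{p},
\]
so that $\langle L_1\phi_\omega,\phi_\omega\rangle=-(p-1)\|\phi_\omega\|_{L^{p+1}}^{p+1}<0$; by the min--max principle $L_1$ has at least one negative eigenvalue. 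On the other hand, the variational characterization through the Nehari functional (Lemma~\ref{lem:gs}) forces the Morse index of $\phi_\omega$ to be at most one: $\phi_\omega$ minimizes $S_\omega$ on the codimension-one manifold $\{K_\omega=0\}$, and since $S_\omega''(\phi_\omega)=\operatorname{diag}(L_1,L_2)$ with $L_2\ge0$ (shown next), the single admissible negative direction must lie in the $L_1$-block. Combining the two bounds, $L_1$ has exactly one negative eigenvalue, which is simple; and the nondegeneracy result (Theorem~\ref{thm:nond}) gives $\ker L_1=\{0\}$, so $0$ is not an eigenvalue of $L_1$.

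Next I would treat $L_2$. Again from \eqref{eq:sp},
\[
    L_2\phi_\omega
   =-\Delta\phi_\omega
    -\frac{\gamma}{|x|^\alpha}\phi_\omega
    +\omega\phi_\omega
    -\phi_\omega^{p}
   =0,
\]
so $\phi_\omega$ is a positive eigenfunction of $L_2=-\Delta+V$ with $V=-\gamma|x|^{-\alpha}+\omega-\phi_\omega^{p-1}$ at eigenvalue $0$. The potential $V$ is bounded above (the singular term $-\gamma|x|^{-\alpha}$ tends to $-\infty$ only near the origin, while $\phi_\omega$ is bounded by Proposition~\ref{prop:re}), and the bottom $E_0$ of the associated form is finite because $\omega>\omega_0$. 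Hence Lemma~\ref{lem:unief} applies: a positive eigenfunction can only correspond to the lowest eigenvalue, so $E_0=0$, $L_2\ge0$, the bottom is simple, and $\ker L_2=\operatorname{span}\{\phi_\omega\}$.

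It remains to locate the essential spectrum and extract a spectral gap. By Weyl's theorem I would argue that $\sigma_{\mr{ess}}(L_1)=\sigma_{\mr{ess}}(L_2)=[\omega,\infty)$: the three potential terms $-\gamma|x|^{-\alpha}$, $-p\phi_\omega^{p-1}$, and $-\phi_\omega^{p-1}$ all vanish at infinity, and for $0<\alpha<\min\{N,2\}$ the inverse-power singularity is form-compact relative to $-\Delta$ via Hardy's inequality, so each is a relatively compact perturbation of $-\Delta+\omega$, whose essential spectrum is $[\omega,\infty)$. Since $\omega>0$, below $\omega$ each operator has only finitely many discrete eigenvalues. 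For $L_1$ these are the single simple negative eigenvalue together with possibly finitely many positive ones; as $0$ is excluded by nondegeneracy and cannot be an accumulation point of a finite set, the remainder of $\sigma(L_1)$ is positive and bounded away from $0$. For $L_2$ the bottom eigenvalue $0$ is simple and isolated, so the rest of $\sigma(L_2)$ is likewise positive and bounded away from $0$.

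The main obstacle is the essential-spectrum step: one must verify rigorously that the attractive inverse-power potential $-\gamma|x|^{-\alpha}$ is a relatively (form-)compact perturbation of $-\Delta$ over the full range $0<\alpha<\min\{N,2\}$---including the delicate two- and one-dimensional cases, where Proposition~\ref{prop:re} shows $\phi_\omega$ is only H\"older (not $C^1$) at the origin---and that the bottom of the essential spectrum is therefore exactly $\omega>0$. Everything else is bookkeeping of eigenvalues resting on Lemmas~\ref{lem:gs} and \ref{lem:unief} together with Theorem~\ref{thm:nond}.
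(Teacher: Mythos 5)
Your route coincides with the paper's own argument (given in the paragraph immediately preceding the proposition): the identity $L_1\phi_\omega=(1-p)\phi_\omega^p$ and hence $\langle L_1\phi_\omega,\phi_\omega\rangle<0$, combined with the Morse-index bound coming from the Nehari characterization (Lemma~\ref{lem:gs}), give exactly one simple negative eigenvalue of $L_1$; Theorem~\ref{thm:nond} removes $0$ from the point spectrum of $L_1$; Lemma~\ref{lem:unief} applied to $L_2\phi_\omega=0$ with $\phi_\omega>0$ gives $L_2\ge0$ and $\ker L_2=\operatorname{span}\{\phi_\omega\}$; and Weyl's theorem locates $\sigma_{\mathrm{ess}}(L_1)=\sigma_{\mathrm{ess}}(L_2)=[\omega,\infty)$. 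Filling in the relative form-compactness of $-\gamma|x|^{-\alpha}$ is legitimate extra detail that the paper leaves implicit.

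There is, however, one step that is genuinely false as written: \emph{``Since $\omega>0$, below $\omega$ each operator has only finitely many discrete eigenvalues.''} Precisely in this setting that fails: for $0<\alpha<2$ the attractive tail $-\gamma|x|^{-\alpha}$ is long-range (decays slower than $|x|^{-2}$), and it is classical that a Schr\"odinger operator with such an attractive tail has \emph{infinitely many} eigenvalues below the bottom of its essential spectrum, accumulating at $\omega$. So your inference that $0$ ``cannot be an accumulation point of a finite set'' has no basis. The repair is standard and does not need finiteness: eigenvalues in the discrete spectrum can accumulate only at points of the essential spectrum. If $\sigma(L_1)\cap(0,\delta)\ne\emptyset$ for every $\delta>0$, then $0$ would be a non-isolated point of the closed set $\sigma(L_1)$; since every spectral point below $\omega$ is an isolated eigenvalue of finite multiplicity, $0$ would then lie in $\sigma_{\mathrm{ess}}(L_1)=[\omega,\infty)$, a contradiction. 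Combined with $\ker L_1=\{0\}$ (which excludes $0$ itself, because any spectral point in $(-\infty,\omega)$ is an eigenvalue) and the unique simple negative eigenvalue, this gives the spectral gap for $L_1$; the identical argument applies to $L_2$, whose bottom eigenvalue $0$ is simple, so the rest of $\sigma(L_2)$ is bounded away from zero. With this correction your proof is complete and matches the paper's.
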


From the uniqueness and nondegeneracy results (Theorems \ref{thm:unique} and \ref{thm:nond}), we can obtain some regularity of the mapping $\omega\mapsto\phi_\omega$ (see also \cite[Section~6]{ShatStra85}). Combining this fact and Proposition~\ref{prop:spec}, we can apply the results of Grillakis, Shatah, and Strauss~\cite{GrilShatStra87} to our problem. More precisely, we have the following.

\begin{theorem}\label{thm:stainsta}
Assume \eqref{eq:asmp} and let $(\phi_\omega)_{\omega>\omega_0}$ be the family of unique positive ground states with $\phi_\omega\in\mc{G}_\omega$.
Then $\omega\mapsto\phi_\omega$ is a $C^2$-mapping from $(\omega_0,\infty)$ into $H^1(\mb{R}^N)$.
Moreover, if $\partial_\omega\|\phi_\omega\|_{L^2}^2>0$, then the standing wave solution $e^{i\omega t}\phi_\omega(x)$ of \eqref{eq:nlsi} is stable, and if 
\begin{equation}\label{eq:gss}
\partial_\omega\|\phi_\omega\|_{L^2}^2<0, 
\end{equation}
then the standing wave is unstable.
\end{theorem}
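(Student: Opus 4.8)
The plan is to deduce Theorem~\ref{thm:stainsta} from the abstract orbital stability and instability theory of Grillakis, Shatah, and Strauss~\cite{GrilShatStra87}, applied to \eqref{eq:nlsi} with its $U(1)$ phase symmetry, whose relative equilibria are exactly the standing waves $e^{i\omega t}\phi_\omega$. That theory requires three inputs: local well-posedness of the Cauchy problem in $H^1(\mb{R}^N)$, already recalled from \cite{Caze03}; a $C^2$ branch $\omega\mapsto\phi_\omega$ of bound states; and the spectral condition that the Hessian $S_\omega''(\phi_\omega)$ has a single negative direction and a kernel reduced to the tangent of the symmetry orbit. The last input is precisely Proposition~\ref{prop:spec}: under the identification $\mb{C}\cong\mb{R}^2$ one has $S_\omega''(\phi_\omega)=\on{diag}(L_1,L_2)$, so the unique negative eigenvalue of $L_1$ supplies $n(S_\omega''(\phi_\omega))=1$, while $\ker L_1=\{0\}$ and $\ker L_2=\on{span}\{\phi_\omega\}$ force the kernel to be spanned by $i\phi_\omega=\partial_\theta(e^{i\theta}\phi_\omega)|_{\theta=0}$. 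The substantive tasks are therefore to build the smooth branch and to convert the abstract dichotomy into the stated slope condition.

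First I would construct the branch by the implicit function theorem. Regard $F(\omega,\phi)\ce S_\omega'(\phi)$ as a $C^1$ map from $(\omega_0,\infty)\times H^1(\mb{R}^N)$ into $H^{-1}(\mb{R}^N)$; its partial derivative $D_\phi F(\omega,\phi_\omega)$ is the operator $L_1$ of \eqref{eq:L1}. The operator $-\Delta-\gamma|x|^{-\alpha}+\omega$ is an isomorphism $H^1\to H^{-1}$ by the norm equivalence established in the proof of Theorem~\ref{thm:nond} (using $\omega>\omega_0$), and $L_1$ differs from it by the multiplication $-p\phi_\omega^{p-1}$, which is compact $H^1\to H^{-1}$ because $\phi_\omega^{p-1}$ is bounded and decays exponentially (Proposition~\ref{prop:re}); hence $L_1$ is Fredholm of index zero, and the triviality of its kernel (Theorem~\ref{thm:nond}) upgrades it to an isomorphism. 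The implicit function theorem then produces a locally unique $C^1$ solution branch, which by the global uniqueness of positive ground states (Corollary~\ref{cor:uniquegs}) is the family $(\phi_\omega)$. To reach $C^2$ I would exploit the positivity and exponential decay of $\phi_\omega$ (Proposition~\ref{prop:re}), splitting space into a compact region where the solutions are bounded away from zero, so that $|s|^{p-1}s$ is smooth there, and an exterior region handled by the decay estimates; the sub-quadratic range $1<p<2$, where $|s|^{p-1}s$ fails to be $C^2$ near $s=0$, is the one place requiring care.

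Next I would set $d(\omega)\ce S_\omega(\phi_\omega)$. Since $S_\omega$ depends on $\omega$ only through the summand $\tfrac{\omega}{2}\|v\|_{L^2}^2$ and $S_\omega'(\phi_\omega)=0$, the chain rule gives $d'(\omega)=\tfrac12\|\phi_\omega\|_{L^2}^2$, and the $C^2$ regularity of the branch makes $d$ twice differentiable with $d''(\omega)=\tfrac12\,\partial_\omega\|\phi_\omega\|_{L^2}^2$. With $n(S_\omega''(\phi_\omega))=1$ from Proposition~\ref{prop:spec} and a one-parameter symmetry group, the Grillakis--Shatah--Strauss criterion reads: the orbit is stable when this count equals the number of positive eigenvalues of the scalar $d''(\omega)$, and unstable when their difference is odd. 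Thus $d''(\omega)>0$ yields stability and $d''(\omega)<0$ yields instability, which in view of $d''(\omega)=\tfrac12\,\partial_\omega\|\phi_\omega\|_{L^2}^2$ is exactly the assertion of the theorem (the case \eqref{eq:gss} being the instability one).

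The stability dichotomy itself, once the branch and Proposition~\ref{prop:spec} are available, is the classical Vakhitov--Kolokolov conclusion and presents no real difficulty. I expect the main obstacle to be, rather, fitting the singular problem into the abstract hypotheses: on the one hand justifying the $C^2$ smoothness of the branch despite $|s|^{p-1}s$ being only finitely differentiable, and on the other verifying that the form $-\gamma\int_{\mb{R}^N}|x|^{-\alpha}|v|^2\,dx$, which is form-bounded and indeed equivalent to a piece of the $H^1$ norm thanks to $\omega>\omega_0$, does not disturb the coercivity and subspace-decomposition requirements of \cite{GrilShatStra87}. I would dispatch both using the norm equivalence from the proof of Theorem~\ref{thm:nond} together with the pointwise decay bounds of Proposition~\ref{prop:re}.
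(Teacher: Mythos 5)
Your proposal follows essentially the same route as the paper: the paper likewise obtains the $C^2$ branch $\omega\mapsto\phi_\omega$ from the uniqueness and nondegeneracy results (citing \cite[Section~6]{ShatStra85} for precisely the implicit-function-theorem construction you spell out), combines this with the spectral information of Proposition~\ref{prop:spec} (one simple negative eigenvalue of $L_1$, kernel of $S_\omega''(\phi_\omega)$ spanned by $i\phi_\omega$), and then invokes the Grillakis--Shatah--Strauss theory \cite{GrilShatStra87}, which for the one-parameter phase symmetry reduces to the sign of $d''(\omega)=\tfrac12\partial_\omega\|\phi_\omega\|_{L^2}^2$ exactly as in your $d$-function computation. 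Since the paper gives no further detail beyond these citations, your Fredholm/IFT argument and the Vakhitov--Kolokolov dichotomy are a consistent elaboration of the paper's proof rather than a different approach.
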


\begin{remark}
Even in the case $\partial_\omega\|\phi_\omega\|_{L^2}^2=0$, we can also use the results of \cite{ComePeli03, Maed12, Ohta11}.
Therefore, stability and instability of the standing wave $u_\omega(t)$ can be determined in theory for any $\omega>\omega_0$. However, it is difficult to investigate the sign of $\partial_\omega\|\phi_\omega\|_{L^2}^2$ for middle $\omega$.
\end{remark}

For now there are two sufficient conditions \eqref{eq:insta} and \eqref{eq:gss} for instability. We can expect that the condition \eqref{eq:insta} should be strictly stronger than \eqref{eq:gss} for the following reasons.
\begin{itemize}
\item
The condition \eqref{eq:gss} guarantees existence of an unstable direction $\psi$ such that $(\psi,\phi_\omega)_{L^2}=0$ and $\langle S_\omega''(\phi_\omega)\psi,\psi\rangle<0$. The condition \eqref{eq:insta} directly gives the unstable direction $\psi=\partial_\lambda\phi_\omega^\lambda|_{\lambda=1}$.
\item
Although \eqref{eq:gss} is known as a sufficient condition for orbital instability,
\eqref{eq:insta} is known as that for strong instability.
\end{itemize}
However, the relationship between these two conditions was not clarified. Here we confirm the above expectation by using our nondegeneracy result (Theorem~\ref{thm:nond}).

\begin{proposition}\label{prop:lamome}
Assume \eqref{eq:asmp} and let $(\phi_\omega)_{\omega>\omega_0}$ be the family of unique positive ground states with $\phi_\omega\in\mc{G}_\omega$.
If $\omega>\omega_0$ satisfies the condition \eqref{eq:insta}, then the condition \eqref{eq:gss} holds.
\end{proposition}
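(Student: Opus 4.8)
The plan is to recast both \eqref{eq:insta} and \eqref{eq:gss} as statements about the quadratic form of $L_1$ and then compare them, exploiting that $L_1$ has exactly one negative eigenvalue. By Theorem~\ref{thm:stainsta} the family $\omega\mapsto\phi_\omega$ is $C^2$, so $\chi\ce\partial_\omega\phi_\omega\in H^1(\R^N)$ exists; differentiating $S_\omega'(\phi_\omega)=0$ in $\omega$ yields $L_1\chi=-\phi_\omega$. Since $\ker L_1=\{0\}$ by Theorem~\ref{thm:nond} and the rest of the spectrum is positive and bounded away from $0$ (Proposition~\ref{prop:spec}), $L_1$ is boundedly invertible and $\chi=-L_1^{-1}\phi_\omega$, whence
\[
\partial_\omega\|\phi_\omega\|_{L^2}^2=2\langle\phi_\omega,\chi\rangle=-2\langle L_1^{-1}\phi_\omega,\phi_\omega\rangle .
\]
Thus \eqref{eq:gss} is equivalent to $\langle L_1^{-1}\phi_\omega,\phi_\omega\rangle>0$, and this is the inequality I would establish.

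For the hypothesis, set $\psi\ce\partial_\lambda\phi_\omega^\lambda|_{\lambda=1}=\tfrac N2\phi_\omega+x\cdot\nabla\phi_\omega$. Differentiating $\lambda\mapsto S_\omega(\phi_\omega^\lambda)$ twice and using $S_\omega'(\phi_\omega)=0$ gives $\partial_\lambda^2S_\omega(\phi_\omega^\lambda)|_{\lambda=1}=\langle L_1\psi,\psi\rangle$ (on real-valued functions $S_\omega''=L_1$), so \eqref{eq:insta} becomes $\langle L_1\psi,\psi\rangle\le0$. Because the scaling preserves the $L^2$-norm, $\langle\psi,\phi_\omega\rangle=\tfrac12\partial_\lambda\|\phi_\omega^\lambda\|_{L^2}^2|_{\lambda=1}=0$; moreover $\psi\neq0$, since otherwise $\phi_\omega$ would be homogeneous of degree $-N/2$, impossible for a continuous, positive, decaying solution. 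Hence $\psi$ is a nonzero element of $\{\phi_\omega\}^\perp$ on which the form of $L_1$ is nonpositive.

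Next I would argue by contradiction, assuming $\langle L_1^{-1}\phi_\omega,\phi_\omega\rangle\le0$. Writing $L_1=-\lambda_0\langle\cdot,\zeta\rangle\zeta+L_+$, where $-\lambda_0<0$ is the simple negative eigenvalue with normalized eigenfunction $\zeta$ and $L_+\ge c>0$ on $\{\zeta\}^\perp$, a Cauchy--Schwarz estimate in the $L_+$-inner product shows that $\langle L_1^{-1}\phi_\omega,\phi_\omega\rangle\le0$ forces $\langle L_1 v,v\rangle\ge0$ for every $v\perp\phi_\omega$. Applied to $v=\psi$, together with $\langle L_1\psi,\psi\rangle\le0$, this yields $\langle L_1\psi,\psi\rangle=0$, so $\psi$ minimizes $v\mapsto\langle L_1 v,v\rangle$ over $\{\phi_\omega\}^\perp$. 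The Euler--Lagrange relation for this constrained minimization then gives $L_1\psi\in\on{span}\{\phi_\omega\}$, i.e.\ $L_1\psi=s\phi_\omega$ for some $s\in\R$.

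To close I would use the explicit form of $L_1\psi$. A short computation from the equation in \eqref{eq:ode} gives
\[
L_1\psi=(2-\alpha)\frac{\gamma}{|x|^\alpha}\phi_\omega+\Bigl(2-\tfrac{N(p-1)}2\Bigr)\phi_\omega^{\,p}-2\omega\phi_\omega .
\]
As $x\to0$ the first term behaves like $(2-\alpha)\gamma|x|^{-\alpha}\phi_\omega(0)$, which is genuinely singular since $0<\alpha<2$, $\gamma>0$, and $\phi_\omega(0)>0$, whereas $s\phi_\omega$ remains bounded; hence $L_1\psi\notin\on{span}\{\phi_\omega\}$, a contradiction. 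This proves $\langle L_1^{-1}\phi_\omega,\phi_\omega\rangle>0$, which is exactly \eqref{eq:gss}. I expect the main difficulty to be precisely this strictness: the Cauchy--Schwarz comparison alone only delivers $\langle L_1^{-1}\phi_\omega,\phi_\omega\rangle\ge0$, so the borderline case $\langle L_1\psi,\psi\rangle=0$ must be excluded, and it is exactly the singularity $|x|^{-\alpha}$ of the potential---through the non-constancy of the ratio $L_1\psi/\phi_\omega$---that rules it out, while the nondegeneracy of Theorem~\ref{thm:nond} supplies the invertibility and spectral decomposition of $L_1$ used throughout.
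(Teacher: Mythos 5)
Your proof is correct, and it reaches the conclusion by a genuinely different organization than the paper, although both arguments turn on the same two ingredients: the spectral information of Proposition~\ref{prop:spec} (a single simple negative eigenvalue and, by Theorem~\ref{thm:nond}, trivial kernel), and the explicit formula for $L_1\psi$, whose singular term $(2-\alpha)\gamma|x|^{-\alpha}\phi_\omega$ shows $L_1\psi\notin\on{span}\{\phi_\omega\}$. The paper argues directly: since $L_1\eta=-\phi_\omega\in\on{span}\{\phi_\omega\}$ for $\eta\ce\partial_\omega\phi_\omega$, the non-span fact gives linear independence of $\psi$ and $\eta$; choosing $\beta\ne0$ so that $\psi+\beta\eta$ is orthogonal to the negative eigenfunction $\chi$, Proposition~\ref{prop:spec} yields
\[
0<\langle L_1(\psi+\beta\eta),\psi+\beta\eta\rangle
=\partial_\lambda^2S_\omega(\phi_\omega^\lambda)|_{\lambda=1}
+\beta^2\langle L_1\eta,\eta\rangle,
\]
using the same orthogonality $\langle L_1\psi,\eta\rangle=-(\psi,\phi_\omega)_{L^2}=0$ that you use, and \eqref{eq:gss} follows since $\langle L_1\eta,\eta\rangle=-\tfrac12\partial_\omega\|\phi_\omega\|_{L^2}^2$. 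You instead reformulate \eqref{eq:gss} as positivity of the Vakhitov--Kolokolov quantity $\langle L_1^{-1}\phi_\omega,\phi_\omega\rangle$ and argue by contradiction via the index-type lemma that $\langle L_1^{-1}\phi_\omega,\phi_\omega\rangle\le0$ forces $L_1\ge0$ on $\{\phi_\omega\}^\perp$, so that the hypothesis \eqref{eq:insta} lands you in the degenerate case $\langle L_1\psi,\psi\rangle=0$, whose Euler--Lagrange equation $L_1\psi=s\phi_\omega$ is then excluded by the singularity. The paper's route is shorter and needs no auxiliary lemma beyond Proposition~\ref{prop:spec}; your route requires the Cauchy--Schwarz/index lemma and the distributional reading of $L_1\psi=s\phi_\omega$ (both left as sketches, but standard and completable), and in exchange it exhibits the exact mechanism: \eqref{eq:gss} is the VK condition, and strictness can fail only in the degenerate case that the singular potential forbids. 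Incidentally, your formula $L_1\psi=(2-\alpha)\gamma|x|^{-\alpha}\phi_\omega+\bigl(2-\tfrac{N(p-1)}{2}\bigr)\phi_\omega^p-2\omega\phi_\omega$ is the correct one; the paper's display has $-\omega\phi_\omega$ where $-2\omega\phi_\omega$ should stand, a harmless slip since only the singular term matters in either argument, and your explicit remark that $\psi\ne0$ fills in a point the paper uses tacitly when it deduces $(\psi,\chi)_{L^2}\ne0$ from $\langle L_1\psi,\psi\rangle\le0$.
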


\begin{remark}
If $\gamma=0$ and $p=1+4/N$, the assertion of Proposition~\ref{prop:lamome} does not hold because
\[
	\partial_\lambda^2S_\omega(\phi_\omega^\lambda)|_{\lambda=1}
	=\partial_\omega\|\phi_\omega\|_{L^2}^2
	=0. \]
\end{remark}

At the end of this paper we prove Proposition~\ref{prop:lamome}.

\begin{proof}[Proof of Proposition~\ref{prop:lamome}]
Let $\chi$ be an eigenfunction of $L_1$ corresponding to the unique negative eigenvalue $\mu_0$ and put
\[
	\psi\ce\partial_\lambda\phi_\omega^\lambda|_{\lambda=1},\quad
	\eta\ce\partial_\omega\phi_\omega. \]
Note that 
\[
	\partial_\lambda^2S_\omega(\phi_\omega^\lambda)|_{\lambda=1}=\langle L_1\psi,\psi\rangle,\quad
	L_1\eta=-\phi_\omega,\quad
	\partial_\omega\|\phi_\omega\|_{L^2}^2=-2\langle L_1\eta,\eta\rangle. \]

Let $\beta\in\R$ be a constant satisfy
\[
    (\psi+\beta\eta,\chi)_{L^2}=0,\quad\text{i.e. }
    \beta\ce-\frac{(\psi,\chi)_{L^2}}{(\eta,\chi)_{L^2}}. \]
We note that the denominator is not zero. Indeed, since $\langle L_1\phi_\omega,\phi_\omega\rangle<0$, by Proposition~\ref{prop:spec} we have $(\phi_\omega,\chi)_{L^2}\ne0$, and so
\[
    (\eta,\chi)_{L^2}
   =\frac{1}{\mu_0}\langle L_1\chi,\eta\rangle
   =\frac{1}{\mu_0}\langle L_1\eta,\chi\rangle
   =-\frac{1}{\mu_0}(\phi_\omega,\chi)_{L^2}
   \ne0. \]
Moreover, we also see that $\beta\ne0$. Indeed, since $\langle L_1\psi,\psi\rangle\le0$, by Proposition~\ref{prop:spec} again we have $(\psi,\chi)_{L^2}\ne0$.

Now we show that $\psi+\beta\eta\ne0$.
It suffices to show that $\psi$ and $\eta$ are linearly independent.
By using the equation \eqref{eq:sp}, we have
\[
L_1\vp
=-\omega\phi_\omega
+(2-\alpha)\frac{\gamma}{|x|^\alpha}\phi_\omega,\quad
L_1\phi_\omega
=-(p-1)\phi_\omega^p,
\]
where 
\begin{align*}
\vp(x)
&\ce\partial_\lambda[\lambda^{2/(p-1)}\phi_\omega(\lambda x)]|_{\lambda=1} \\
&=\left(\frac{2}{p-1}+x\cdot\nabla\right)\phi_\omega(x)
=\psi(x)
-\frac{N(p-1)-4}{2(p-1)}\phi_\omega(x).
\end{align*}
Therefore, we obtain
\[
L_1\psi
=L_1\vp
+\frac{N(p-1)-4}{2(p-1)}L_1\phi_\omega
=-\omega\phi_\omega
+(2-\alpha)\frac{\gamma}{|x|^\alpha}\phi_\omega
-\frac{N(p-1)-4}{2}\phi_\omega^p.
\]
From this, we see that $L_1\psi\notin\on{span}\{\phi_\omega\}$.
On the other hand, $L_1\eta=-\phi_\omega\in\on{span}\{\phi_\omega\}$.
This means that $\psi$ and $\eta$ are linearly independent.

Therefore, by Proposition~\ref{prop:spec} again we obtain
\begin{align*}
0&<\langle L_1(\psi+\beta\eta),\psi+\beta\eta\rangle
=\langle L_1\psi,\psi\rangle
+2\beta\langle L_1\psi,\eta\rangle
+\beta^2\langle L_1\eta,\eta\rangle \\
&=\partial_\lambda^2S_\omega(\phi_\omega^\lambda)|_{\lambda=1}
-2\beta^2\partial_\omega\|\phi_\omega\|_{L^2}^2,
\end{align*}
where we used 
\begin{equation}\label{eq:Lpe}
2\langle L_1\psi,\eta\rangle=-2\langle \phi_\omega,\psi\rangle=-\partial_\lambda\|\phi_\omega^\lambda\|_{L^2}^2|_{\lambda=1}=0.
\end{equation}
Since $\beta\ne0$, we obtain $\partial_\omega\|\phi_\omega\|_{L^2}^2<\partial_\lambda^2S_\omega(\phi_\omega^\lambda)|_{\lambda=1}/(2\beta^2)\le0$.
This implies the conclusion.
\end{proof}

\begin{remark}
If the strict inequality in \eqref{eq:insta} holds, i.e.\ $\partial_\lambda^2S_\omega(\phi_\omega^\lambda)|_{\lambda=1}<0$, 
we can more easily show that $\psi$ and $\eta$ are linearly independent because of $\langle L_1\psi,\psi\rangle<0$ and \eqref{eq:Lpe}.
\end{remark}

\section*{Acknowledgments}
The author would like to thank Professor Masahito Ohta for encouragements.
He is grateful to Professor Naoki Shioji for valuable comments.
He also thanks Kuranosuke Nishimura for helpful comments.
This work was supported by Grant-in-Aid for JSPS Fellows 18J11090 and JSPS KAKENHI Grant Number 20K14349.



\end{document}